\newtheorem{theorem}{Theorem}[section]
\newtheorem{lemma}{Lemma}[section]
\newtheorem{definition}{Definition}[section]
\newtheorem{corollary}{Corollary}[section]
\newtheorem{remark}{Remark}[section]
\newcommand{\la}{\langle}
\newcommand{\ra}{\rangle} 
\newcommand{\mR}{\mathbb{R}}
\newcommand{\mC}{\mathbb{C}}
\newcommand{\mN}{\mathbb{N}}
\newcommand{\mE}{\mathbb{E}}
\newcommand{\mS}{\mathbb{S}}
\newcommand{\cM}{\mathcal{M}}
\newcommand{\cP}{\mathcal{P}}
\newcommand{\cH}{\mathcal{H}}
\newcommand{\ux}{\underaccent{\bar}{x}}
\newcommand{\uy}{\underaccent{\bar}{y}}
\newcommand{\uz}{\underaccent{\bar}{z}}
\newcommand{\uzd}{\underaccent{\bar}{z}^\dagger}
\newcommand{\uu}{\underaccent{\bar}{u}}
\newcommand{\uud}{\underaccent{\bar}{u}^\dagger}
\newcommand{\zc}{\bar{z}}
\newcommand{\uc}{\bar{u}}
\newcommand{\dbar}{\bar{\partial}}
\newcommand{\upx}{\partial_{\underaccent{\bar}{x}}}
\newcommand{\upy}{\partial_{\underaccent{\bar}{y}}}
\newcommand{\upz}{\partial_{\underaccent{\bar}{z}}}
\newcommand{\upu}{\partial_{\underaccent{\bar}{u}}}
\newcommand{\upzd}{\partial_{\underaccent{\bar}{z}}^\dagger}
\newcommand{\upud}{\partial_{\underaccent{\bar}{u}}^\dagger}
\def\l{{\lambda}}
\begin{document}
%%%%%%%%%%%%%%%%%%%%%%%%%%%%%%%%%%%%%%%%%%%%%%%%%%%%%%%%
\title{Reproducing kernels for polynomial null-solutions of Dirac operators}

\author{H.\ De Bie\footnote{E-mail: {\tt Hendrik.DeBie@UGent.be}} \and F. Sommen\footnote{E-mail: {\tt Frank.Sommen@UGent.be}} \and M. Wutzig\footnote{E-mail: {\tt Michael.Wutzig@UGent.be}}}

\vspace{10mm}

\date{
\small{Department of Mathematical Analysis\\ Faculty of Engineering and Architecture - Ghent University}\\
\small{Galglaan 2, 9000 Gent, Belgium}\\
}

%\author{Hendrik De Bie}

%\date{
%\small{Department of Mathematical Analysis\\
%Faculty of Engineering and Architecture -- Ghent University\\ Galglaan 2, 9000 Gent,
%Belgium}\\
%E-mail: {\tt Hendrik.DeBie@ugent.be}
%}

\maketitle

\begin{abstract}
It is well-known that the reproducing kernel of the space of spherical harmonics of fixed homogeneity is given by a Gegenbauer polynomial. By going over to complex variables and restricting to suitable bihomogeneous subspaces, one obtains a reproducing kernel expressed as a Jacobi polynomial, which leads to Koornwinder's celebrated result on the addition formula.

In the present paper, the space of Hermitian monogenics, which is the space of polynomial bihomogeneous null-solutions of a set of two complex conjugated Dirac operators, is considered.
The reproducing kernel for this space is obtained and expressed in terms of sums of Jacobi polynomials. This is achieved through use of the underlying Lie superalgebra $\mathfrak{sl}(1|2)$, combined with the equivalence between the $L^2$ inner product on the unit sphere and the Fischer inner product. The latter also leads to a new proof in the standard Dirac case related to the Lie superalgebra  $\mathfrak{osp}(1|2)$.\\
\newline
\noindent{\it Keywords\/}: reproducing kernels, Gegenbauer polynomials, Jacobi polynomials, spherical harmonics, Dirac operator, Clifford analysis\\
\newline
\noindent{\it Mathematics Subject Classification\/}: 30G35, 33C45, 42B35
\end{abstract}

\section{Introduction}
Various dual pairs in the sense of Howe can be obtained in explicit realizations through the study of sets of differential operators of Laplace or Dirac type. In this paper we are concerned with four such dual pairs:
\begin{itemize}
\item the real Laplace operator governed by $\mathfrak{sl}(2) \times O(m)$, and its Dirac refinement $\mathfrak{osp}(1|2)\times Spin(m)$,  
\item the complex Laplace operator governed by $\mathfrak{gl}(2) \times U(n)$, and its Dirac refinement $\mathfrak{sl}(1|2)\times U(n)$.
\end{itemize}
In all these cases, a crucial role is played by spaces of polynomial null-solutions of the relevant differential operators. For $\mathfrak{sl}(2) \times O(m)$ they constitute e.g. the well-known  spaces of spherical harmonics.

The four situations mentioned above are intimately connected with the theory of orthogonal polynomials, which appear in the explicit expressions for the reproducing kernels of the related spaces of polynomial null-solutions. In the case of spherical harmonics, they are expressed as a single Gegenbauer polynomial \cite{AAR}. When going over to complex variables and considering bihomogeneous complex harmonics related to $\mathfrak{gl}(2) \times U(n)$ the resulting Jacobi polynomial follows from Koornwinder's celebrated addition formula \cite{koorn}.

While those two cases settle the situation for the Laplace operator, everything becomes quite a bit more complicated when considering Dirac operators instead. For the standard Euclidean Dirac operator, governed by the dual pair $\mathfrak{osp}(1|2)\times Spin(m)$, the reproducing kernel of the space of so-called spherical monogenics, i.e. homogeneous null-solutions of the Dirac operator, is known as a suitable sum of two Gegenbauers, see e.g. \cite{green} or \cite{deform}.
However, when performing a complex splitting of the Dirac operator, new interesting spaces of Hermitian monogenics, i.e. bihomogeneous null-solutions in the kernel of both Dirac operators, appear. It is the aim of the present paper to determine the reproducing kernels for these spaces, and to express them in terms of Jacobi polynomials as in the Koornwinder case. 

This will be achieved in two steps. First, we will develop a new way to determine the reproducing kernel in the case of the Euclidean Dirac operator (see Theorem 
\ref{action2} and Theorem \ref{RepKernel1}). Subsequently this method is adapted to the complex setting, thus leading to our two main results Theorem \ref{Dirac4} and Theorem \ref{RepKernel2}.

The main technical tools that we use to obtain these results are the (anti)commutation relations provided by the Lie superalgebra generated by the relevant differential operators, the equivalence between the $L^2$ inner product and the Fischer inner product \cite{Fi, S}, as well as intricate use of various Jacobi polynomial identities.

For a summary of our main results and a comparison of the four different cases, we refer the reader to Table \ref{table}.

%[DO WE WANT TO USE THIS?]Further work: quaternionic/symplectic split, use for Szego kernel, construction of new addition formulas by comparison with explicit bases for hermitian monogenics.

The paper is organized as follows. In Section \ref{sec:1} we summarize the main results on polynomial null-solutions of Laplace type operators and state the main properties of Jacobi and Gegenbauer polynomials we will use in the rest of the paper. In Section \ref{sec:2} we treat the case of the Euclidean Dirac operator, to develop our new method. Finally, in Section \ref{sec:3} we treat the case of the two complex conjugated Dirac operators.

\section{Preliminaries}
\label{sec:1}
	\subsection{Spherical harmonics}
		We consider functions defined on $\mR^m$ with values in $\mC$. On $\mR^m$ we have the standard Euclidean inner product $\la x,y\ra=\sum_{j=1}^mx_jy_j$. 
		The norm of a vector is denoted by $|x|=\sqrt{\la x,x\ra}$. Using a multi-index notation, a monomial in $x=(x_1,\cdots,x_m)$ 
		can be written as $x^A=x_1^{\alpha_1}x_2^{\alpha_2}\cdots x_m^{\alpha_m}$ for an index vector $A=[\alpha_1,\cdots,\alpha_m]\in\mN^m$ with length 
		$|A|=\alpha_1+\cdots+\alpha_m$. The space of polynomials of degree $k$ is then
		\begin{align*}
			\Pi_k=\Big\{Q(x):Q(x)=\sum_{|\alpha|\leq k}c_\alpha x^\alpha\Big\},
		\end{align*}
		where the sum runs over all possible index vectors $\alpha$ with length at most $k$ and the coefficients $c_\alpha$ are complex numbers. On $\Pi_k$ we can 
		define an	inner product
		\begin{align*}
			\la P,Q\ra_\partial=[\overline{P(\partial)}Q(x)]_{x=0},
		\end{align*}
		the so-called Fischer inner product, see \cite{Fi,xu,S}. Here $\overline{P(\partial)}$ is the complex conjugate of the operator obtained by substituting the 
		partial derivative $\partial_{x_j}$ for	every variable $x_j$ in $P(x)$. Note that in the literature the complex conjugation is sometimes applied to the 
		second argument of this inner product. Here we have chosen this definition to keep it coherent with sections \ref{sec:2} and \ref{sec:3}.\\
		\indent A polynomial $P(x)$ is called homogeneous of degree $k$ if it holds for every $x\in\mR^m$ that $P(x)=|x|^kP(\frac{x}{|x|})$, where $\frac{x}{|x|}\in\mS^{m-1}$ is 
		the restriction of $x$ to the $(m-1)$-dimensional unit sphere $\mS^{m-1}=\{x\in\mR^m:|x|=1\}$. 
		The homogeneous polynomials of degree $k$ are the eigenfunctions of the Euler operator	$\mE=\sum_{j=1}^mx_j\partial_{x_j}$ with eigenvalue $k$. Therefore, 
		the space of homogeneous polynomials of degree $k$ is defined as
		\begin{align*}
			\cP_k=\big\{P(x):\mE P(x)=kP(x)\big\}.
		\end{align*}
		It can easily be verified that $\la\cdot,\cdot\ra_\partial$ is indeed an inner product as shown in \cite{xu}. The orthogonality of the spaces $\cP_k$ with respect to 
		$\la\cdot,\cdot\ra_\partial$, that is $\la P,Q\ra_\partial=0$	with $P(x)\in\cP_k$ and $Q(x)\in\cP_l$ for $k\neq l$, follows immediately.\\
		For each of the spaces $\cP_k$ there exists a unique reproducing kernel with respect to the Fischer inner product. This kernel is given by 
		$Z_k^m(x,y)=\frac{\la x,y\ra^k}{k!}$ and satisfies
		\begin{align*}
			\la Z_k^m(\cdot,y),P_l(\cdot)\ra_\partial=\delta_{kl}P_l(y),
		\end{align*}
		for any $P_l\in\cP_l$. On the space of polynomials $\Pi_k$ these kernels therefore work as projections onto the homogeneous spaces $\cP_j$, viz. for a polynomial
		$Q(x)=\sum_{j=0}^kP_j(x)$ of degree $k$ with $P_j\in\cP_j$ we have that
		\begin{align*}
			\la Z_j^m(\cdot,y),Q(\cdot)\ra_\partial=P_j(y).
		\end{align*}
		The space of $k$-homogeneous polynomials that are also harmonic, that is null-solutions of the Laplace operator $\Delta=\sum_{j=1}^m\partial_{x_j}^2$, is denoted by
		\begin{align*}
			\cH_k=\big\{P(x)\in\cP_k:\Delta P(x)=0\big\}.
		\end{align*}
		The restriction of such a homogeneous, harmonic polynomial to the unit sphere $\mS^{m-1}$ is called a spherical harmonic. However, because every homogeneous polynomial is
		uniquely defined by its values on the sphere we will also call $\cH_k$ the space of spherical harmonics of degree $k$.\\
		\indent As a result of the fact that every $k$-homogeneous polynomial $P_k\in\cP_k$ can be decomposed as $P_k(x)=H_k(x)+|x|^2Q_{k-2}(x)$, with $H_k\in\cH_k$ being 
		a spherical	harmonic and $Q_{k-2}\in\cP_{k-2}$ a homogeneous polynomial of lower degree (see also \cite{xu,Fi,S}), we have the Fischer decomposition
		\begin{align}
			\label{Fischer}
			\cP_k=\bigoplus_{j=0}^{\lfloor\frac{k}{2}\rfloor}|x|^{2j}\cH_{k-2j}.
		\end{align}
		This result states that every polynomial can be described in terms of spherical harmonics, which is the higher dimensional analogue of the	theory of Fourier series in two
		dimensions. As in the case $m=2$ spherical harmonics of different degrees are also orthogonal with respect to another inner product on $\Pi_k$, the	$L^2$ inner product 
		on the sphere
		\begin{align*}
			\la P,Q\ra_{\mS^{m-1}}=\frac{1}{\omega_{m-1}}\int_{\mS^{m-1}}\overline{P(x)}Q(x)d\sigma(x),
		\end{align*}
		where $d\sigma(x)$ is the spherical measure and $\omega_{m-1}=2\pi^\frac{m}{2}/\Gamma(\frac{m}{2})$ the surface of the $(m-1)$ dimensional
		unit sphere. Note that as for the Fischer inner product we apply the complex conjugation on the first argument. It was shown in \cite{xu} that these two 
		inner products are proportional for a spherical harmonic and a homogeneous polynomial of the same degree. The precise statement is as follows.
		\begin{theorem}
			For a spherical harmonic $H_k\in\cH_k$ and a homogeneous polynomial $P_k\in\cP_k$ it holds that
			\begin{align*}
				2^k\Big(\frac{m}{2}\Big)_k\la H_k,P_k\ra_{\mS^{m-1}}=\la H_k,P_k\ra_\partial.
			\end{align*}
			Moreover for two spherical harmonics of arbitrary order $H_k\in\cH_k$ and $Q_l\in\cH_l$, with $k\neq l$, one has that
			\begin{align*}
				2^k\Big(\frac{m}{2}\Big)_k\la H_k,Q_l\ra_{\mS^{m-1}}=\la H_k,Q_l\ra_\partial=0.
			\end{align*}
			Here $\big(\frac{m}{2}\big)_k=\frac{m}{2}(\frac{m}{2}+1)\cdots(\frac{m}{2}+k-1)$ denotes the Pochhammer symbol.
		\end{theorem}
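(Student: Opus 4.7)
The plan is to reduce the statement to the case of two harmonic polynomials of the same degree, then invoke uniqueness of $O(m)$-invariant Hermitian inner products on the irreducible $O(m)$-module $\cH_k$, and finally pin down the proportionality constant by evaluating both sides on one convenient example.

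\textbf{Step 1 (Reduction to the harmonic case).} Using the Fischer decomposition \eqref{Fischer}, write $P_k = H_k^{(0)} + |x|^2 R_{k-2}$ with $H_k^{(0)}\in\cH_k$ and $R_{k-2}\in\cP_{k-2}$. A short monomial check (equivalently, the observation that substituting $\partial_{x_j}$ for $x_j$ in $|x|^2 R(x)$ yields the operator $\Delta \circ R(\partial)$) establishes that multiplication by $|x|^2$ is adjoint to $\Delta$ for $\la\cdot,\cdot\ra_\partial$, so
\[
\la H_k, |x|^2 R_{k-2}\ra_\partial = \la \Delta H_k, R_{k-2}\ra_\partial = 0.
\]
For the spherical product, $|x|^2 \equiv 1$ on $\mS^{m-1}$ reduces the second summand to $\la H_k, R_{k-2}\ra_{\mS^{m-1}}$; iterating Fischer expresses $R_{k-2}|_{\mS^{m-1}}$ as a sum of spherical harmonics of degrees strictly less than $k$. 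Since eigenfunctions of the spherical Laplacian with distinct eigenvalues $-k(k+m-2)$ are orthogonal (Green's identity on $\mS^{m-1}$), each such term vanishes. This simultaneously proves the second statement of the theorem and reduces the first statement to the case $P_k \in \cH_k$.

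\textbf{Step 2 (Schur's lemma).} On the finite-dimensional $O(m)$-module $\cH_k$, both $\la\cdot,\cdot\ra_\partial$ and $\la\cdot,\cdot\ra_{\mS^{m-1}}$ are manifestly $O(m)$-invariant Hermitian inner products. Because $\cH_k$ is irreducible as an $O(m)$-representation, Schur's lemma forces
\[
\la H_k, H_k'\ra_\partial = c_k\, \la H_k, H_k'\ra_{\mS^{m-1}}, \qquad H_k,H_k'\in\cH_k,
\]
for a single constant $c_k$ depending only on $k$ and $m$.

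\textbf{Step 3 (Normalization).} Test on the harmonic polynomial $H_k(x) = (x_1+ix_2)^k$. Setting $\partial_z = \tfrac{1}{2}(\partial_1 - i\partial_2)$, the Fischer product computes to $\la H_k, H_k\ra_\partial = (2\partial_z)^k z^k\big|_{x=0} = 2^k k!$. For the sphere integral $\int_{\mS^{m-1}}(x_1^2+x_2^2)^k\,d\sigma$, expand by the binomial theorem, apply the standard formula $\int_{\mS^{m-1}} x^{2A}\,d\sigma = 2\prod_j\Gamma(\alpha_j+\tfrac{1}{2})/\Gamma(|A|+m/2)$, and collapse the resulting sum using the convolution identity $\sum_{j=0}^k\binom{2j}{j}\binom{2(k-j)}{k-j} = 4^k$. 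This gives $\la H_k, H_k\ra_{\mS^{m-1}} = k!/(m/2)_k$, so $c_k = 2^k(m/2)_k$, as claimed.

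The main obstacle is the explicit evaluation of the sphere integral in Step 3; this is the only place where a nontrivial calculation enters, and it could equivalently be handled by Pizzetti's formula or by induction on $k$ using integration by parts on the sphere. All other steps are structural and use only the adjoint relation between $|x|^2$ and $\Delta$, the Fischer decomposition, and the irreducibility of $\cH_k$.
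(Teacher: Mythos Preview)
Your argument is correct. Note, however, that the paper does not actually prove this theorem: it is quoted from the reference \cite{xu} (Dai--Xu) as a known preliminary result, so there is no ``paper's own proof'' to compare against directly.

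That said, your route via Schur's lemma is somewhat different from the more computational arguments one typically finds in the literature. The standard proofs (e.g.\ in \cite{xu}) tend to work either through an explicit Gaussian-weight identity relating $\int_{\mR^m}\overline{H_k}P_k\,e^{-|x|^2}dx$ simultaneously to the Fischer and spherical pairings, or through a direct inductive computation using the adjointness of $|x|^2$ and $\Delta$ together with the formula $\Delta(|x|^{2j}H_{k})=2j(m+2k+2j-2)|x|^{2j-2}H_k$. Your approach trades that explicit bookkeeping for the structural input that $\cH_k$ is $O(m)$-irreducible; this is cleaner conceptually, at the cost of invoking a representation-theoretic fact and still needing one honest integral in Step~3. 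Both approaches share Step~1 (the reduction via Fischer decomposition and the $|x|^2\leftrightarrow\Delta$ adjunction), which is really the heart of the matter.

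Two minor remarks on Step~3: the test function $(x_1+ix_2)^k$ tacitly requires $m\ge 2$, which is harmless here; and the irreducibility of $\cH_k$ under $O(m)$ (as opposed to $SO(m)$) is needed precisely to cover the case $m=2$, where $\cH_k$ splits under rotations alone. Your normalization computation checks out: the convolution identity $\sum_j\binom{2j}{j}\binom{2(k-j)}{k-j}=4^k$ does the job, and one could equally reach $\la H_k,H_k\ra_{\mS^{m-1}}=k!/(m/2)_k$ via the beta integral after passing to the coordinates $(x_1,x_2)=r(\cos\theta,\sin\theta)$ on a two-dimensional slice.
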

		As for the space of homogeneous polynomials $\cP_k$ there also exits a unique reproducing kernel on the space of spherical harmonics $\cH_k$. In \cite{stein} 
		this kernel was called the zonal harmonic of degree $k$.
		\begin{theorem}
			\label{harmkernel1}
			For any $H_l\in\cH_l$ it holds that
			\begin{align*}
				\la K_k^m(\cdot,y),H_l(\cdot)\ra_{\mS^{m-1}}=\delta_{kl}H_l(y),
			\end{align*}
			with the reproducing kernel
			\begin{align*}
				K_k^m(x,y)=\frac{k+\mu}{\mu}|x|^k|y|^kC_k^\mu(t),
			\end{align*}
			where $\mu=\frac{m}{2}-1$, $t=\frac{\la x,y\ra}{|x||y|}$ and $C_k^\mu(t)$ the Gegenbauer polynomial of degree $k$ and order $\mu$.
		\end{theorem}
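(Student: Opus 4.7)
The plan is to derive the $L^2$-reproducing kernel by rescaling the Fischer-reproducing kernel of $\cH_k$, using the proportionality $\la\cdot,\cdot\ra_\partial = 2^k (m/2)_k \la\cdot,\cdot\ra_{\mS^{m-1}}$ from the preceding theorem. The orthogonality statement for $l\neq k$ in that theorem already delivers the $\delta_{kl}$ factor, so it suffices to verify the reproducing property for $l = k$ and check that $K_k^m(\cdot, y) \in \cH_k$.

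First, I would produce the Fischer-reproducing kernel of $\cH_k$ by projecting the Fischer kernel $Z_k^m(x,y) = \la x,y\ra^k/k!$ of $\cP_k$ onto $\cH_k$. Applying the Fischer decomposition (\ref{Fischer}) in the $x$-variable gives $Z_k^m(x,y) = F_k^m(x,y) + |x|^2 R(x,y)$ with $F_k^m(\cdot, y) \in \cH_k$. The remainder lies in $\bigoplus_{j\geq 1}|x|^{2j}\cH_{k-2j}$, which is Fischer-orthogonal to $\cH_k$, so that $\la F_k^m(\cdot, y), H_k\ra_\partial = \la Z_k^m(\cdot, y), H_k\ra_\partial = H_k(y)$ for every $H_k \in \cH_k$. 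The preceding theorem then immediately gives $K_k^m = 2^k (m/2)_k F_k^m$ as the $L^2$-reproducing kernel.

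Second, I would identify $F_k^m$ with the claimed zonal expression. By $O(m)$-invariance and bi-homogeneity of degree $k$ in each variable, $F_k^m(x,y) = |x|^k|y|^k p(t)$ for some polynomial $p$ of degree $\leq k$ in $t = \la x,y\ra/(|x||y|)$. The requirement $\Delta_x F_k^m = 0$ translates, via the chain rule, into the Gegenbauer differential equation for $p$ with parameter $\mu = m/2-1$, forcing $p = c_k C_k^\mu$. The constant $c_k$ is pinned down by matching the leading $\la x,y\ra^k$-coefficient of $F_k^m$ with that of $Z_k^m$: since $C_k^\mu(t) = \bigl(2^k(\mu)_k/k!\bigr) t^k + \cdots$, one obtains $c_k = 1/\bigl(2^k (\mu)_k\bigr)$. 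Combining with the rescaling factor and using $(m/2)_k/(\mu)_k = (\mu+k)/\mu$ yields the stated kernel. The main obstacle is precisely this identification step; a more computational alternative is to write $F_k^m = \sum_j a_j |x|^{2j}|y|^{2j}\la x,y\ra^{k-2j}$, derive the two-term recursion for the $a_j$ from $\Delta_x F_k^m = 0$, and recognize it as the recursion for the coefficients of $C_k^\mu$ in its explicit hypergeometric expansion.
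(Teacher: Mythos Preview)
Your argument is correct and well-structured. Note, however, that the paper does not actually supply its own proof of this statement: Theorem~\ref{harmkernel1} is quoted in the preliminaries as a classical fact, with a reference to Stein--Weiss \cite{stein} where the zonal harmonic is introduced. So there is no ``paper's proof'' to compare against here.

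That said, your route is exactly the one suggested by the surrounding material: project the Fischer kernel $Z_k^m$ onto $\cH_k$ via the Fischer decomposition~\eqref{Fischer}, then rescale using the proportionality $\la\cdot,\cdot\ra_\partial = 2^k(m/2)_k\la\cdot,\cdot\ra_{\mS^{m-1}}$ from the preceding theorem. The identification step via $O(m)$-invariance and the Gegenbauer equation is standard, and your leading-coefficient bookkeeping $(m/2)_k/(\mu)_k = (\mu+k)/\mu$ is correct. The computational alternative you mention---writing $F_k^m = \sum_j a_j |x|^{2j}|y|^{2j}\la x,y\ra^{k-2j}$ and deriving the two-term recursion from $\Delta_x F_k^m = 0$---is in fact how many textbooks (including \cite{xu}) carry this out, and it avoids having to argue separately that $p$ is a polynomial rather than merely a function of $t$. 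Either way, your proposal is a complete and correct proof of a result the paper takes for granted.
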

		Note that the spherical variable $t=\frac{\la x,y\ra}{|x||y|}=\cos(\phi)$ is just the cosine of the angle between the vectors $x$ and $y$. On the
		sphere the kernel $K_k^m(\frac{x}{|x|},\frac{y}{|y|})=\frac{\mu+k}{\mu}C_k^\mu(t)$ only depends on this spherical variable, hence the term \itshape zonal \upshape
		in \cite{stein}.
	\subsection{Complex harmonic analysis}
		If the vector space $\mR^m$ is of even dimension, i.e. $m=2n$, a complex structure can be applied to identify $\mR^{2n}$ with the complex vector space $\mC^n$. A complex
		vector $z=(z_1,\cdots,z_n)\in\mC^n$ then has complex coordinates $z_j=x_j+ix_{n+j}$. The complex conjugation is denoted by $\zc_j=x_j-ix_{n+j}$ and
		the Hermitian inner product between two complex vectors is $\la z,u\ra=\sum_{j=1}^nz_j\uc_j$. The partial differential operators are given by 
		\begin{align*}
			\partial_{z_j}&=\frac{1}{2}(\partial_{x_j}-i\partial_{x_{n+j}}),\\
			\dbar_{z_j}&=\frac{1}{2}(\partial_{x_j}+i\partial_{x_{n+j}}).
		\end{align*}
		Note that we denote a function that depends on the complex vectors $z$ and $\zc$, and hence on $x$, by $f(z)$ rather than $f(z,\zc)$ to simplify notations. 
		A function $f(z)$ will thus not in general be	holomorphic.\\
		It is now possible to split the Euler operator $\mE$ into two complex Euler operators
		\begin{align*}
			\mE_z&=\sum_{j=1}^nz_j\partial_{z_j},\\
			\mE_{\zc}&=\sum_{j=1}^n\zc_j\dbar_{z_j}.
		\end{align*}
		It is easily verified that $\mE_z+\mE_{\zc}=\mE$ allowing us to refine the notion of a $k$-homogeneous polynomial to a (bi-)homogeneous polynomial of degree $(p,q)$
		with $p+q=k$. The space of $(p,q)$-homogeneous polynomials is defined as
		\begin{align*}
			\cP_{p,q}=\{P(z)|\mE_z P(z)=pP(z),\mE_{\zc} P(z)=qP(z)\}.
		\end{align*}
		The space of $k$-homogeneous polynomials can then be understood as a direct sum of these $(p,q)$-homogeneous spaces, hence
		\begin{align*}
			\cP_k=\bigoplus_{j=0}^k\cP_{j,k-j}.
		\end{align*}
		The Fischer inner product defined on $\Pi_k$ can now be written as
		\begin{align*}
			\la P,Q\ra_\partial=[\overline{P(\partial)}Q(z)]_{z=0},
		\end{align*}
		where the operator polynomial $P(\partial)$ is obtained by substituting $2\dbar_{z_j}=\partial_{x_j}+i\partial_{x_{n+j}}$ for every $z_j$	and 
		$2\partial_{z_j}=\partial_{x_j}-i\partial_{x_{n+j}}$ for every $\zc_j$ in $P(z)$. As on $\cP_k$ there is also a reproducing kernel on the 
		space $\cP_{p,q}$ with respect to	$\la\cdot,\cdot\ra_\partial$. This kernel is given by $Z_{p,q}^n(z,u)=\frac{\la z,u\ra^p\overline{\la z,u\ra}^q}{p!q!}$ and satisfies
		\begin{align*}
			\la Z_{p,q}^n(\cdot,u),P_{s,t}(\cdot)\ra_\partial=\delta_{ps}\delta_{qt}P_{s,t}(u),
		\end{align*}
		for any bihomogeneous polynomial $P_{s,t}\in\cP_{s,t}$. On the space of polynomials of degree $k$ these kernels act as projections onto its subspaces $\cP_{s,t}$. 
		For a polynomial $Q(z)=\sum_{s=0}^p\sum_{t=0}^qP_{s,t}(z)$ of degree $k=p+q$ with $P_{s,t}\in\cP_{s,t}$ we have that
		\begin{align*}
			\la Z_{s,t}^n(\cdot,u),Q(\cdot)\ra_\partial=P_{s,t}(u).
		\end{align*}
		The	Laplace operator can be written in terms of the complex partial derivatives as $\Delta=4\sum_{j=1}^n\partial_{z_j}\dbar_{z_j}$. The space of spherical
		harmonics of order $(p,q)$ is then defined as the space of $(p,q)$-homogeneous polynomials that are in the kernel of $\Delta$, hence
		\begin{align*}
			\cH_{p,q}=\{H(z)|H(z)\in\cP_{p,q},\Delta H(z)=0\}.
		\end{align*}
		The reproducing kernel on $\cH_{p,q}$ with respect to the spherical inner product 
		\begin{align*}
			\la f,g\ra_{\mS^{2n-1}}=\frac{1}{\omega_{2n-1}}\int_{\mS^{2n-1}}\overline{f(z)}g(z)d\sigma(z)
		\end{align*}
		was derived by Koornwinder in \cite{koorn} in the context of establishing the addition formula for
		Jacobi polynomials. We summarize his result in he following theorem. The reader may also consult \cite{stras} for a more recent treatment.
		\begin{theorem}
			\label{harmkernel2}
			For any $H_{s,t}\in\cH_{s,t}$ it holds that
			\begin{align*}
				\la K_{p,q}^n(\cdot,u),H_{s,t}(\cdot)\ra_{\mS^{2n-1}}=\delta_{ps}\delta_{qt}H_{s,t}(u)
			\end{align*}
			with the reproducing kernel given by
			\begin{align*}
				K_{p,q}^n(z,u)=c_{p,q}\la z,u\ra^{p-q}\la z,z\ra^q\la u,u\ra^qP_q^{\nu,p-q}(2s-1),
			\end{align*}
			where $p\geq q$, $c_{p,q}=\frac{\nu+1+p+q}{\nu+1}$, $\nu=n-2$, the angular variable $s=\frac{\la z,u\ra\overline{\la z,u\ra}}{\la z,z\ra\la u,u\ra}$ and 
			$P_q^{\nu,p-q}(2s-1)$	the Jacobi polynomial of degree $q$ and parameters $\nu$ and $p-q$.
		\end{theorem}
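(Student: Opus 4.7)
The plan is to transfer the spherical reproducing statement to a Fischer-type one and then pin down the kernel by $U(n)$-equivariance and harmonicity, in the same spirit as Theorem \ref{harmkernel1}.

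First, I would establish the bihomogeneous analogue of the $L^2$-Fischer equivalence stated just before Theorem \ref{harmkernel1}: for every $H_{p,q}\in\cH_{p,q}$ and $P_{p,q}\in\cP_{p,q}$ there is a constant $\alpha(n,p,q)$ with
$$
\la H_{p,q},P_{p,q}\ra_\partial=\alpha(n,p,q)\,\la H_{p,q},P_{p,q}\ra_{\mS^{2n-1}}.
$$
This follows by expanding $P_{p,q}$ along the Fischer decomposition $\cP_{p,q}=\bigoplus_{j\ge 0}\la z,z\ra^j\cH_{p-j,q-j}$, noting that under $\la\cdot,\cdot\ra_\partial$ multiplication by $\la z,z\ra$ is adjoint to a multiple of $\Delta$, so only the harmonic summand contributes; on the $\mS^{2n-1}$-side, distinct harmonic components are likewise orthogonal by bihomogeneity. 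Since both resulting inner products are positive-definite and $U(n)$-invariant on the irreducible module $\cH_{p,q}$, Schur's lemma yields the proportionality and the constant is computed by testing on a single convenient harmonic. The theorem is then reduced to proving the Fischer version $\la K_{p,q}^n(\cdot,u),H_{p,q}\ra_\partial=\alpha(n,p,q)H_{p,q}(u)$.

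Second, I would realise $K_{p,q}^n$ as the Fischer-orthogonal projection in the variable $z$ of the polynomial reproducing kernel $Z_{p,q}^n(z,u)=\la z,u\ra^p\overline{\la z,u\ra}^q/(p!\,q!)$ onto $\cH_{p,q}$, using the same Fischer decomposition. This projection is $U(n)$-equivariant under the diagonal action on $(z,u)$, is $(p,q)$-homogeneous in $z$ and $(q,p)$-homogeneous in $u$, and the ring of $U(n)$-invariants in $z,\zc,u,\uc$ is generated by $\la z,z\ra$, $\la u,u\ra$, $\la z,u\ra$ and $\overline{\la z,u\ra}$. Solving the two bi-homogeneity constraints on an invariant monomial forces the ansatz
$$
K_{p,q}^n(z,u)=\la z,u\ra^{p-q}\la z,z\ra^q\la u,u\ra^q\,Q(s),\qquad s=\frac{\la z,u\ra\,\overline{\la z,u\ra}}{\la z,z\ra\,\la u,u\ra},
$$
with $Q$ a polynomial of degree at most $q$ in the single invariant $s$.

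Finally, imposing $\Delta K_{p,q}^n=0$ via $\Delta=4\sum_j\partial_{z_j}\dbar_{z_j}$ and rewriting every second derivative of the four factors in terms of multiplication and differentiation in $s$ alone, the harmonicity equation collapses to a second-order linear ODE for $Q$. Under $x=2s-1$ this becomes the Jacobi equation with parameters $(\nu,p-q)$, whose unique polynomial solution of degree $q$ up to scalar is $P_q^{\nu,p-q}(2s-1)$. The constant $c_{p,q}=(\nu+1+p+q)/(\nu+1)$ is then fixed by evaluating the Fischer reproducing identity on a highest-weight harmonic, where $s$ and hence $P_q^{\nu,p-q}(2s-1)$ can be computed in closed form. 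The main obstacle is the harmonicity computation itself: each $\partial_{z_j}$ and $\dbar_{z_j}$ hits two of the four invariant factors, and substantial cancellations are required to recover exactly the Jacobi ODE with the claimed parameters; the subsequent normalisation of $c_{p,q}$ is a secondary but delicate bookkeeping step.
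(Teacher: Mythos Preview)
The paper does not prove this theorem: it is quoted as a known result, attributed to Koornwinder \cite{koorn} (with \cite{stras} as a secondary reference), and is used only as input for the subsequent Hermitian monogenic constructions. So there is no ``paper's own proof'' to compare against.

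That said, your outline is a correct and self-contained route to the result. A few remarks. Your Step~1 is slightly more than needed: since $\cH_{p,q}\subset\cH_{p+q}$, the proportionality of the Fischer and spherical inner products is already inherited from the real statement (Theorem preceding \ref{harmkernel1}) with $m=2n$ and $k=p+q$, giving $\alpha(n,p,q)=2^{p+q}(n)_{p+q}$ without a separate Schur argument. Your Step~2 invariant analysis is exactly right: the bihomogeneity constraints on a monomial $\la z,z\ra^a\la u,u\ra^b\la z,u\ra^c\overline{\la z,u\ra}^d$ force $a=b$, $c=p-a$, $d=q-a$ with $0\le a\le q$, which is precisely the content of your ansatz with $\deg Q\le q$. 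In Step~3 the harmonicity computation indeed collapses to the Jacobi equation with parameters $(\nu,p-q)$ and degree $q$; the only polynomial solution up to scalar is $P_q^{\nu,p-q}$, and since the projection of $Z_{p,q}^n$ onto $\cH_{p,q}$ is nonzero this fixes $Q$. For Step~4, rather than evaluating on a highest-weight vector you can alternatively determine $c_{p,q}$ by matching the top coefficient (the $s^q$ term) of the projection of $Z_{p,q}^n$ against that of $P_q^{\nu,p-q}(2s-1)$, which avoids computing any spherical integral. Either way the bookkeeping is straightforward once the Jacobi identification is made.
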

	\subsection{Jacobi and Gegenbauer polynomials}
		Jacobi polynomials are a class of polynomials that are orthogonal with respect to the weight function $(1-x)^a(1+x)^b$ on the interval $[-1,1]$. The Jacobi
		polynomial $P_k^{a,b}(x)$ of degree $k$ with parameters $a,b\in\mR$, $a,b>-1$ is given by 
		\begin{align*}
			P_k^{a,b}(x)=\frac{\Gamma(a+k+1)}{k!\Gamma(a+b+k+1)}\sum_{j=0}^k\binom{k}{j}\frac{\Gamma(a+b+k+j+1)}{\Gamma(a+j+1)}\Big(\frac{x-1}{2}\Big)^j.
		\end{align*}
		As orthogonal polynomials they satisfy certain contiguous relations. The classic recurrence relations of the Jacobi polynomials $P_{k}^{a,b}(x)$ and the relation 
		with their derivatives can be found in \cite{askey,gabor} and are given by
		\begin{lemma}
			\label{JacRec1}
			For $a,b\in\mR$, $a,b>-1$ and $k\in\mN$ one has
			\begin{align}
				\label{jacobi1}P_{k+1}^{a+1,b}(x)-P_{k+1}^{a,b+1}(x)&=P_{k}^{a+1,b+1}(x)\\
				\label{jacobi2}(1-x)P_{k}^{a+1,b}(x)+(1+x)P_{k}^{a,b+1}(x)&=2P_{k}^{a,b}(x)\\
				\label{jacobi3}(k+a+b+2)P_{k+1}^{a,b+1}(x)+(k+a+1)P_{k}^{a,b+1}(x)&=(2k+a+b+3)P_{k+1}^{a,b}(x)\\
				\label{jacobi4}P_k^{a+1,b}(x)=\frac{\Gamma(k+b+1)}{\Gamma(k+a+b+2)}\sum_{j=0}^k(2j+a+b+1)&\frac{\Gamma(j+a+b+1)}{\Gamma(j+b+1)}P_j^{a,b}(x).
			\end{align}
			The derivative of a Jacobi polynomial is again a Jacobi polynomial, namely
			\begin{align}
				\label{jacobi5}\Big(P_{k}^{a,b}(x)\Big)'(x)=\frac{1}{2}(k+a+b+1)P_{k-1}^{a+1,b+1}(x).
			\end{align}
		\end{lemma}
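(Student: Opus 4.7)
The plan is to recognise that the five identities are classical contiguous and differentiation relations for Jacobi polynomials, all tabulated in the standard references \cite{askey,gabor} cited in the statement. Each of them admits a direct verification from the explicit series representation given immediately above the lemma: expand both sides in powers of $\tfrac{x-1}{2}$, cancel the common $\Gamma$-prefactor, and compare the coefficient of $\left(\tfrac{x-1}{2}\right)^j$ termwise.

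Concretely, for the differentiation formula (\ref{jacobi5}) one simply differentiates the defining series termwise and reindexes $j\mapsto j+1$; the resulting series matches $\tfrac{1}{2}(k+a+b+1)P_{k-1}^{a+1,b+1}(x)$ on the nose. For the three shift relations (\ref{jacobi1}), (\ref{jacobi2}), and (\ref{jacobi3}), after factoring out the common Gamma prefactor on each side one is left with an elementary identity between ratios of the form $\Gamma(a+b+k+j+1)/\Gamma(a+j+1)$ and their shifts in $a$ and $b$, which reduce to polynomial identities in $(j,k,a,b)$ that can be checked by inspection. For (\ref{jacobi2}) the calculation is made even shorter by splitting $2 = (1-x)+(1+x)$ on the right and matching parameter shifts on the two summands separately.

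The connection formula (\ref{jacobi4}) is conceptually different, as it expresses $P_k^{a+1,b}$ in the full basis $\{P_j^{a,b}\}_{j=0}^{k}$ rather than in terms of a few neighbouring polynomials. The cleanest proof is via orthogonality on $[-1,1]$ with weight $(1-x)^{a}(1+x)^{b}$: the coefficient of $P_j^{a,b}$ in the expansion is read off from
\begin{align*}
\int_{-1}^{1}P_k^{a+1,b}(x)\,P_j^{a,b}(x)\,(1-x)^{a}(1+x)^{b}\,dx,
\end{align*}
which is evaluated by Rodrigues' formula together with $j$-fold integration by parts (shifting the derivatives from $P_j^{a,b}$ onto $P_k^{a+1,b}$ and using (\ref{jacobi5}) iteratively), and then divided by the known squared norm of $P_j^{a,b}$ to produce the displayed factor $(2j+a+b+1)\Gamma(j+a+b+1)/\Gamma(j+b+1)$. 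The only real obstacle throughout is keeping the Gamma-function bookkeeping straight; no new ideas are needed, so the lemma is essentially a compilation of standard facts recorded here for later reference.
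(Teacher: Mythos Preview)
Your proposal is correct, but note that the paper does not actually prove this lemma at all: it is stated as a compilation of classical facts with a bare citation to \cite{askey,gabor}, and no argument is given in the text. So there is nothing to compare against on the paper's side.

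What you have written goes further than the paper by sketching genuine proofs. The methods you outline are the standard ones and are sound: termwise differentiation of the series for (\ref{jacobi5}), coefficient comparison in the $\left(\tfrac{x-1}{2}\right)^j$ basis for (\ref{jacobi1})--(\ref{jacobi3}), and the orthogonality/Rodrigues route for the connection formula (\ref{jacobi4}). This is exactly how these identities are established in Szeg\H{o} and Askey, so your sketch is both correct and faithful to the cited sources. For the purposes of this paper, however, a one-line reference suffices, and your closing sentence already acknowledges that the lemma is ``essentially a compilation of standard facts recorded here for later reference'' --- which is precisely how the authors treat it.
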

		If one considers special cases of Jacobi polynomials more relations can be found. Important for us is the following case and the resulting recurrence relations.
		\begin{lemma}
			\label{JacRec2}
			For $p,q,n\in\mN$, $n\geq 2$ and $p>q$ it holds that
			\begin{align}
				\label{jacobi6}&(p-q)P_{q+1}^{n-2,p-q}(x)+2s\Big(P_{q+1}^{n-2,p-q}(x)\Big)'=(p+1)P_{q+1}^{n-1,p-q-1}(x)\\
				\label{jacobi7}&(q+1)P_{q+1}^{n-2,p-q}(x)-2s\Big(P_{q+1}^{n-2,p-q}(x)\Big)'=-(p+1)P_{q}^{n-1,p-q}(x)\\
				\label{jacobi8}&\binom{n-1+p}{p}P_q^{n-1,p-q}(x)=\sum_{j=0}^q\kappa_{p-j,q-j}P_{q-j}^{n-2,p-q}(x),
			\end{align}
			with $\kappa_{p-j,q-j}=\frac{n-1+p+q-2j}{n-1}\binom{n-2+p-j}{p-j}$ and $x=2s-1$.
		\end{lemma}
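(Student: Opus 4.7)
Plan:

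The plan is to establish the three identities in turn. For (\ref{jacobi8}), the argument is a direct substitution into (\ref{jacobi4}); for (\ref{jacobi6}) and (\ref{jacobi7}), they turn out to be equivalent modulo (\ref{jacobi1}), so it suffices to prove one of them.

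For (\ref{jacobi8}), I would substitute $(k,a,b) \mapsto (q, n-2, p-q)$ in (\ref{jacobi4}) to express $P_q^{n-1,p-q}(x)$ as a sum of the $P_j^{n-2,p-q}(x)$. Reversing the summation index by $j \mapsto q-j$, converting Gamma-function ratios into the binomial coefficients $\binom{n-2+p-j}{p-j}$, and multiplying through by $\binom{n-1+p}{p}$ produces the claimed formula, the key cancellation being $\Gamma(n+p) = \Gamma(n-1+p+1)$ in the overall prefactor. This is essentially bookkeeping of factorials.

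For (\ref{jacobi6}) and (\ref{jacobi7}), my first observation would be that their sum cancels the derivative contributions and, after dividing by $p+1$, yields
\[
P_{q+1}^{n-2,p-q}(x) = P_{q+1}^{n-1,p-q-1}(x) - P_q^{n-1,p-q}(x),
\]
which is precisely (\ref{jacobi1}) with $(k,a,b) \mapsto (q, n-2, p-q-1)$. Hence the two identities are equivalent given (\ref{jacobi1}), and only one requires an independent proof. I would focus on (\ref{jacobi6}): applying (\ref{jacobi5}) with $(k,a,b) \mapsto (q+1,n-2,p-q)$ gives $(P_{q+1}^{n-2,p-q})'(x) = \tfrac{n+p}{2}P_q^{n-1,p-q+1}(x)$, reducing (\ref{jacobi6}) to the pure polynomial identity
\[
(n+p)(1+x)\, P_q^{n-1,p-q+1}(x) = 2(q+1)\, P_{q+1}^{n-2,p-q}(x) + 2(p+1)\, P_q^{n-1,p-q}(x).
\]
This is the specialization (with $k = q$, $a = n-1$, $b = p-q$) of the general contiguous relation $(2k+a+b+2)(1+x)P_k^{a,b+1}(x) = 2(k+b+1) P_k^{a,b}(x) + 2(k+1) P_{k+1}^{a,b}(x)$, which I would establish by combining (\ref{jacobi2}) to expand $(1+x)P_k^{a,b+1}$, together with (\ref{jacobi1}) and (\ref{jacobi3}) to handle the parameter shifts on the resulting Jacobi polynomials.

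The hard part will be the derivation of the contiguous relation from Lemma \ref{JacRec1}. That relation is not among the five listed there, and extracting it requires a careful sequence of applications of (\ref{jacobi2}) and (\ref{jacobi3}) with multiple index shifts, interlaced with (\ref{jacobi1}), so as to avoid introducing Jacobi polynomials with unwanted parameters (for instance, $P_k^{n,p-q}$ with the wrong upper index). Once this contiguous relation is in place, the rest of the lemma follows routinely.
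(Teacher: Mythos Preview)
Your overall strategy is the same as the paper's: derive (\ref{jacobi8}) directly from (\ref{jacobi4}), observe that (\ref{jacobi6}) and (\ref{jacobi7}) are equivalent modulo (\ref{jacobi1}), and prove one of them by first eliminating the derivative via (\ref{jacobi5}) and then manipulating with (\ref{jacobi1})--(\ref{jacobi3}). The paper does exactly this, proving (\ref{jacobi6}) first and then deducing (\ref{jacobi7}) from it.

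There is, however, a concrete error in your plan. After applying (\ref{jacobi5}) you correctly arrive at the target identity
\[
(n+p)(1+x)\,P_q^{n-1,p-q+1}(x)=2(q+1)\,P_{q+1}^{n-2,p-q}(x)+2(p+1)\,P_q^{n-1,p-q}(x),
\]
but this is \emph{not} the specialization of the contiguous relation $(2k+a+b+2)(1+x)P_k^{a,b+1}=2(k+b+1)P_k^{a,b}+2(k+1)P_{k+1}^{a,b}$ at $(k,a,b)=(q,n-1,p-q)$. That specialization gives
\[
(n+p+q+1)(1+x)\,P_q^{n-1,p-q+1}(x)=2(p+1)\,P_q^{n-1,p-q}(x)+2(q+1)\,P_{q+1}^{n-1,p-q}(x),
\]
which differs from the target both in the leading coefficient ($n+p$ versus $n+p+q+1$) and in the first parameter of the degree-$(q+1)$ polynomial ($n-2$ versus $n-1$). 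No choice of $(k,a,b)$ in your general relation produces the mixed parameters $n-1$ and $n-2$ that the target requires. So the ``hard part'' you identify is not the proof of that standard contiguous relation but rather a genuinely different combination of (\ref{jacobi1})--(\ref{jacobi3}). The paper's route is to use (\ref{jacobi1}) to split $P_q^{n-1,p-q+1}$, then (\ref{jacobi2}) to handle the $(1+x)$ factor, then (\ref{jacobi3}) and (\ref{jacobi1}) once more; you will need a similar chain rather than a single specialization.
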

		\begin{proof}
			To prove equation (\ref{jacobi6}) one uses the derivative relation (\ref{jacobi5}) and $2s=x+1$, hence
			\begin{align*}
				(p-q)P_{q+1}^{n-2,p-q}(x)+2s\Big(P_{q+1}^{n-2,p-q}(x)\Big)'&=(p-q)P_{q+1}^{n-2,p-q}(x)+\frac{1}{2}(x+1)(n+p)P_q^{n-1,p-q+1}.
			\end{align*}
			Splitting the last term according to (\ref{jacobi1}) gives
			\begin{align*}
				(p-q)P_{q+1}^{n-2,p-q}(x)+2s\Big(P_{q+1}^{n-2,p-q}(x)\Big)'&=(p-q)P_{q+1}^{n-2,p-q}(x)+\frac{1}{2}(x+1)(n+p)\Big(P_{q+1}^{n-1,p-q}(x)-P_{q+1}^{n-2,p-q+1}(x)\Big)\\
				&=(p-q)P_{q+1}^{n-2,p-q}(x)+\frac{1}{2}(x+1)(n+p)P_{q+1}^{n-1,p-q}(x)\\
				&-\frac{1}{2}(n+p)\Big(2P_{q+1}^{n-2,p-q}(x)-(1-x)P_{q+1}^{n-1,p-q}(x)\Big)\\
				&=(p-q)P_{q+1}^{n-2,p-q}(x)-(n+p)\Big(P_{q+1}^{n-2,p-q}(x)-P_{q+1}^{n-1,p-q}(x)\Big),
			\end{align*}
			where equation (\ref{jacobi2}) was used in the second step. When replacing $(n+p)P_{q+1}^{n-1,p-q}(x)$ with respect to property (\ref{jacobi3}) one gets
			\begin{align*}
				(p-q)P_{q+1}^{n-2,p-q}(x)+2s\Big(P_{q+1}^{n-2,p-q}(x)\Big)'&=-(n+q)P_{q+1}^{n-2,p-q}(x)+(n+p+q+1)P_{q+1}^{n-1,p-q-1}(x)\\
				&-(n+q)P_{q}^{n-1,p-q}(x)\\
				&=-(n+q)\Big(P_{q+1}^{n-2,p-q}(x)+P_{q}^{n-1,p-q}(x)\Big)+(n+p+q+1)P_{q+1}^{n-1,p-q-1}(x).
			\end{align*}
			The result is obtained by applying relation (\ref{jacobi1}), hence
			\begin{align*}
				(p-q)P_{q+1}^{n-2,p-q}(x)+2s\Big(P_{q+1}^{n-2,p-q}(x)\Big)'&=-(n+q)P_{q+1}^{n-1,p-q-1}(x)+(n+p+q+1)P_{q+1}^{n-1,p-q-1}(x)\\
				&=(p+1)P_{q+1}^{n-1,p-q-1}(x).
			\end{align*}
			To show that relation (\ref{jacobi7}) holds we add $(p-q)P_{q+1}^{n-2,p-q}(x)+2s\Big(P_{q+1}^{n-2,p-q}(x)\Big)'-(p+1)P_{q+1}^{n-1,p-q-1}(x)$ which equals
			$0$ due to (\ref{jacobi6}), hence
			\begin{align*}
				(q+1)P_{q+1}^{n-2,p-q}(x)-2s\Big(P_{q+1}^{n-2,p-q}(x)\Big)'&=(q+1)P_{q+1}^{n-2,p-q}(x)-2s\Big(P_{q+1}^{n-2,p-q}(x)\Big)'\\
				&+(p-q)P_{q+1}^{n-2,p-q}(x)+2s\Big(P_{q+1}^{n-2,p-q}(x)\Big)'-(p+1)P_{q+1}^{n-1,p-q-1}(x)\\
				&=(q+1)P_{q+1}^{n-2,p-q}(x)+(p-q)P_{q+1}^{n-2,p-q}(x)-(p+1)P_{q+1}^{n-1,p-q-1}(x)\\
				&=-(p+1)\Big(P_{q+1}^{n-1,p-q-1}(x)-P_{q+1}^{n-2,p-q}(x)\Big)\\
				&=-(p+1)P_{q}^{n-1,p-q}(x),
			\end{align*}
			where we used relation (\ref{jacobi1}) in the last step.\\
			Property (\ref{jacobi8}) follows from (\ref{jacobi4}) by taking into account that $\Gamma(N+1)=N!$ for $N\in\mN$.
		\end{proof}
		\begin{remark}
			\label{pzero}
			Note that the recurrence formulas (\ref{jacobi1}), (\ref{jacobi2}) and (\ref{jacobi3}) of Lemma \ref{JacRec1} as well as (\ref{jacobi6}) and (\ref{jacobi7}) of
			Lemma \ref{JacRec2} also hold formally for $k=-1$ and $q=-1$ respectively by identifying $P_{-1}^{a,b}(x)$ with $0$.
		\end{remark}
		The Gegenbauer polynomials are special cases of the Jacobi polynomials. For a real parameter $\mu>-\frac{1}{2}$ they can be written as
		\begin{align*}
			C_k^\mu(x)=\frac{\big(2\mu\big)_k}{\big(\mu+\frac{1}{2}\big)_k}P_k^{\mu-\frac{1}{2},\mu-\frac{1}{2}}(x).
		\end{align*}
		The Gegenbauer polynomial of degree $k$ and parameter $\mu\in\mR$, $\mu>-\frac{1}{2}$ is given explicitly by
		\begin{align*}
			C_k^\mu(x)=\sum_{j=0}^{\lfloor\frac{k}{2}\rfloor}(-1)^j\frac{\Gamma(k-j+\mu)}{\Gamma(\mu)j!(k-2j)!}(2x)^{k-2j}.
		\end{align*}
		The classic recurrence relations of the Gegenbauer polynomials (cf. \cite{gabor}) are given in the following lemma.
		\begin{lemma}
			For Gegenbauer polynomials of degree $k\in\mN$ and parameter $\mu\in\mR$, $\mu>-\frac{1}{2}$ it holds that
			\begin{align}
				\label{GegenRec}kC_k^\mu(t)&=2(k+\mu-1)tC_{k-1}^\mu(t)-(k+2\mu-2)C_{k-2}^\mu(t)\\
				\label{Gegen1}(k+\mu)C_k^\mu(t)&=\mu\big(C_k^{\mu+1}(t)-C_{k-2}^{\mu+1}(t)\big),\\
				\label{Gegen2}4\mu(\mu+k+1)(1-t^2)C_{k}^{\mu+1}&=(k+2\mu)(k+2\mu+1)C_{k}^{\mu}-(k+1)(k+2)C_{k+2}^{\mu}.
			\end{align}
		\end{lemma}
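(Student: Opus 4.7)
The plan is to treat the three identities as classical facts about Gegenbauer polynomials (cf.\ \cite{gabor}), proving each either from the generating function
\[
    G^\mu(r,t) \;:=\; (1-2tr+r^2)^{-\mu} \;=\; \sum_{k=0}^\infty C_k^\mu(t)\, r^k
\]
or, where more convenient, by translating a Jacobi identity from Lemma~\ref{JacRec1} through the conversion
\[
    C_k^\mu(x) \;=\; \frac{(2\mu)_k}{(\mu+\tfrac12)_k}\, P_k^{\mu-1/2,\mu-1/2}(x)
\]
recorded immediately before the statement.

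For (\ref{GegenRec}), differentiating $G^\mu$ in $r$ and clearing denominators yields the functional equation $(1-2tr+r^2)\,\partial_r G^\mu = 2\mu(t-r)\,G^\mu$; substituting the power series expansion and equating the coefficients of $r^{k-1}$ on both sides produces the three-term recurrence in one line.

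For (\ref{Gegen1}), I would sum the claimed identity against $r^k$, so that the left-hand side becomes $(r\partial_r + \mu)G^\mu$ while the right-hand side becomes $\mu(1-r^2)G^{\mu+1}$. Using $\partial_r G^\mu = \frac{2\mu(t-r)}{1-2tr+r^2}G^\mu$, a short calculation gives
\[
    (r\partial_r + \mu)G^\mu \;=\; \frac{2\mu r(t-r)+\mu(1-2tr+r^2)}{1-2tr+r^2}\, G^\mu \;=\; \mu(1-r^2)\, G^{\mu+1},
\]
which is the coefficient-wise equality asserted.

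Identity (\ref{Gegen2}) is where I expect the main work. The plan is to apply (\ref{Gegen1}) at both degree $k$ and degree $k+2$ in order to rewrite the right-hand side of (\ref{Gegen2}) as a linear combination of $C_{k-2}^{\mu+1}$, $C_k^{\mu+1}$, $C_{k+2}^{\mu+1}$; after division by $4(k+\mu+1)$, the assertion then reduces to a three-term identity of the shape
\[
    (1-t^2)\, C_k^{\mu+1}(t) \;=\; \alpha\, C_{k+2}^{\mu+1}(t) + \beta\, C_k^{\mu+1}(t) + \gamma\, C_{k-2}^{\mu+1}(t)
\]
purely at parameter $\mu+1$. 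To close, I would compute $t^2 C_k^{\mu+1}(t)$ by applying the three-term recurrence (\ref{GegenRec}) at parameter $\mu+1$ twice, and subtract from $C_k^{\mu+1}(t)$ to obtain $(1-t^2)\,C_k^{\mu+1}(t)$; matching the resulting coefficients against the $\alpha,\beta,\gamma$ produced by (\ref{Gegen1}) is then the routine algebraic check that I expect to collapse cleanly to (\ref{Gegen2}).
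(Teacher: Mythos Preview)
Your proposal is correct, but note that the paper does not actually prove this lemma: it is stated as a classical fact with a bare citation to Szeg\H{o}~\cite{gabor}, and no proof environment follows. So there is nothing in the paper to compare against beyond the reference.

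That said, your generating-function arguments are sound and give a self-contained derivation where the paper gives none. The proofs of (\ref{GegenRec}) and (\ref{Gegen1}) via coefficient extraction from $(1-2tr+r^2)^{-\mu}$ are standard and go through exactly as you wrote. For (\ref{Gegen2}) your reduction also works: substituting (\ref{Gegen1}) at degrees $k$ and $k+2$ converts the right-hand side into a combination of $C_{k-2}^{\mu+1}$, $C_k^{\mu+1}$, $C_{k+2}^{\mu+1}$, and applying (\ref{GegenRec}) twice at parameter $\mu+1$ to expand $t^2 C_k^{\mu+1}$ yields matching coefficients. The only thing to watch is that the algebraic check for the $C_k^{\mu+1}$ coefficient is not entirely mechanical; it reduces to the identity
\[
4(k+\mu+1)=\frac{(k+1)(2k+2\mu+4)}{k+\mu+2}+\frac{(k+2\mu+1)(2k+2\mu)}{k+\mu},
\]
which does collapse as expected. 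Your approach is more elementary and explicit than simply quoting \cite{gabor}; the paper's choice to omit the proof is reasonable since these identities are purely auxiliary.
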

		As for the Jacobi polynomials we have that the derivative of a Gegenbauer polynomial is again a Gegenbauer polynomial.
		\begin{lemma}
			The derivative of a Gegenbauer polynomial $C_k^\mu(t)$ is given by
			\begin{align}
				\label{Gegen3}\big(C_k^\mu(t)\big)'=2\mu C_{k-1}^{\mu+1}(t).
			\end{align}
		\end{lemma}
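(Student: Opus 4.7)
The plan is to prove the identity by a direct calculation from the explicit series representation of $C_k^\mu$. First I would write out
\begin{align*}
C_k^\mu(t)=\sum_{j=0}^{\lfloor k/2\rfloor}(-1)^j\frac{\Gamma(k-j+\mu)}{\Gamma(\mu)j!(k-2j)!}(2t)^{k-2j}
\end{align*}
and differentiate termwise. The $j=k/2$ term (when $k$ is even) contributes nothing since $(2t)^{0}$ differentiates to zero, so the summation range effectively becomes $0\leq j\leq \lfloor(k-1)/2\rfloor$. Pulling out the factor $2(k-2j)$ from the derivative of $(2t)^{k-2j}$ and simplifying with $(k-2j)/(k-2j)!=1/(k-2j-1)!$ yields
\begin{align*}
\bigl(C_k^\mu(t)\bigr)'=\sum_{j=0}^{\lfloor (k-1)/2\rfloor}(-1)^j\frac{2\,\Gamma(k-j+\mu)}{\Gamma(\mu)j!(k-1-2j)!}(2t)^{k-1-2j}.
\end{align*}

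Next I would compare this with the explicit expansion of $2\mu C_{k-1}^{\mu+1}(t)$. Using $\Gamma(\mu+1)=\mu\Gamma(\mu)$, the factor $2\mu/\Gamma(\mu+1)$ collapses to $2/\Gamma(\mu)$, and the numerator $\Gamma((k-1)-j+(\mu+1))=\Gamma(k-j+\mu)$ matches exactly, as does the power $(2t)^{k-1-2j}$ and the factorial $(k-1-2j)!$. Summation ranges coincide, so termwise agreement completes the proof.

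An alternative, equally short route would use the embedding $C_k^\mu(t)=\frac{(2\mu)_k}{(\mu+\tfrac12)_k}P_k^{\mu-1/2,\mu-1/2}(t)$ stated just before the lemma, together with (\ref{jacobi5}) giving $(P_k^{\mu-1/2,\mu-1/2})'=\tfrac12(k+2\mu)P_{k-1}^{\mu+1/2,\mu+1/2}$, and then convert back via $C_{k-1}^{\mu+1}=\frac{(2\mu+2)_{k-1}}{(\mu+3/2)_{k-1}}P_{k-1}^{\mu+1/2,\mu+1/2}$; the Pochhammer ratio identity $\frac{(2\mu)_k}{(2\mu+2)_{k-1}}=\frac{2\mu(2\mu+1)}{2\mu+k}$ and $\frac{(\mu+3/2)_{k-1}}{(\mu+1/2)_k}=\frac{2}{2\mu+1}$ collapse the constants to $2\mu$. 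I would not expect a real obstacle here; the only mild care required is the correct handling of the upper summation index and the cancellation of $\Gamma(\mu)$ against $\Gamma(\mu+1)$, both of which are routine.
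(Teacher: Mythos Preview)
Your proposal is correct. The direct termwise differentiation of the explicit series works exactly as you describe: the constant term drops out, the factor $(k-2j)$ cancels one factorial, and $\Gamma(\mu+1)=\mu\Gamma(\mu)$ absorbs the $2\mu$. The alternative route through the Jacobi embedding and (\ref{jacobi5}) is also sound, and your Pochhammer bookkeeping is right.

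Note, however, that the paper does not actually prove this lemma: it is stated as a classical identity (the surrounding recurrence relations are attributed to \cite{gabor}) and left without a proof environment. So there is no ``paper's own proof'' to compare against; you have simply supplied a valid derivation of a standard result that the authors take for granted.
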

		For computations involving Gegenbauer polynomials we will use an additional relation.
		\begin{lemma}
			For a Gegenbauer polynomial $C_k^\mu(t)$ we have that
			\begin{align}
				\label{Gegen4}kC_k^\mu(t)-t\big(C_k^\mu(t)\big)'=-2\mu C_{k-2}^{\mu+1}(t).
				\end{align}
			\end{lemma}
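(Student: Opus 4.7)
My plan is a short algebraic manipulation combining the derivative formula (\ref{Gegen3}), the recurrence (\ref{GegenRec}) at the shifted parameter $\mu+1$, and the contiguous relation (\ref{Gegen1}). All three identities are already available in the excerpt, so no new tools are needed.

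The first step is to rewrite the derivative term using (\ref{Gegen3}):
\begin{align*}
t\big(C_k^\mu(t)\big)' = 2\mu\, t\, C_{k-1}^{\mu+1}(t).
\end{align*}
To eliminate the factor $tC_{k-1}^{\mu+1}(t)$, I would apply the three-term recurrence (\ref{GegenRec}) with parameter $\mu+1$, which reads $kC_k^{\mu+1}(t)=2(k+\mu)t C_{k-1}^{\mu+1}(t)-(k+2\mu)C_{k-2}^{\mu+1}(t)$, and solve it for $2t C_{k-1}^{\mu+1}(t)$. This expresses $t(C_k^\mu)'(t)$ purely in terms of $C_k^{\mu+1}$ and $C_{k-2}^{\mu+1}$, with denominator $k+\mu$.

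The second step is to treat the $kC_k^\mu(t)$ piece with (\ref{Gegen1}): $(k+\mu)C_k^\mu(t)=\mu\bigl(C_k^{\mu+1}(t)-C_{k-2}^{\mu+1}(t)\bigr)$, which also yields an expression with denominator $k+\mu$, making the two terms directly comparable.

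Finally I would subtract the two expressions. The coefficients of $C_k^{\mu+1}(t)$ cancel, and the coefficient of $C_{k-2}^{\mu+1}(t)$ collapses to $\frac{\mu}{k+\mu}\bigl(-k-(k+2\mu)\bigr)=-2\mu$, giving exactly the claimed identity. I do not expect any real obstacle here; the only thing to watch is that the factor $k+\mu$ in the denominator is harmless, since it appears symmetrically in both pieces (and in any case $\mu>-1/2$ together with $k\geq 2$ ensures $k+\mu\neq 0$; for $k=0,1$ the identity holds trivially with $C_{k-2}^{\mu+1}\equiv 0$ by the degree convention, analogous to Remark \ref{pzero}).
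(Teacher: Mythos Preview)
Your proposal is correct and follows essentially the same route as the paper: both proofs apply (\ref{Gegen3}) to rewrite the derivative, use (\ref{GegenRec}) at parameter $\mu+1$ to eliminate $tC_{k-1}^{\mu+1}(t)$, and then invoke (\ref{Gegen1}) so that the $C_k^{\mu+1}$ contributions cancel, leaving $-2\mu C_{k-2}^{\mu+1}(t)$. The only cosmetic difference is that you convert $kC_k^\mu$ via (\ref{Gegen1}) before subtracting, whereas the paper first groups the $C_{k-2}^{\mu+1}$ terms and then applies (\ref{Gegen1}); the algebra is identical.
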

		\begin{proof}
			Using the derivative relation (\ref{Gegen3}) gives
			\begin{align*}
				kC_k^\mu-t\Big(C_k^\mu\Big)'&=kC_k^\mu-2\mu tC_{k-1}^{\mu+1}\\
				&=kC_k^\mu-\mu\Big(\frac{k}{k+\mu}C_k^{\mu+1}+\frac{k+2\mu}{k+\mu}C_{k-2}^{\mu+1}\Big)\\
				&=kC_k^\mu-k\frac{\mu}{k+\mu}\Big(C_k^{\mu+1}-C_{k-2}^{\mu+1}\Big)-2\mu C_{k-2}^{\mu+1},
			\end{align*}
			where also the recurrence relation (\ref{GegenRec}) was used in the second step. When applying equation (\ref{Gegen1}) we get
			\begin{align*}
				kC_k^\mu-t\Big(C_k^\mu\Big)'&=kC_k^\mu-kC_k^\mu-2\mu C_{k-2}^{\mu+1}=-2\mu C_{k-2}^{\mu+1}.
			\end{align*}
		\end{proof}
\section{Spherical monogenics}
	\label{sec:2}
	\subsection{Euclidean Clifford analysis}
		Starting from the real vector space $\mR^m$ with orthonormal basis $\{e_1,\cdots,e_m\}$ the Clifford algebra $\mathcal{C\ell}_m$ can be constructed.
		The (non-commutative) algebraic multiplication follows the rules
		\begin{align*}
			e_je_k+e_ke_j=-2\delta_{jk}\hspace{15pt}j,k=1,\cdots,m.
		\end{align*}
		Elements $X\in\mathcal{C\ell}_m$ can be written in the form
		\begin{align*}
			X=\sum\limits_Ae_AX_A,
		\end{align*}
		with $X_A\in\mC$. Summation runs over all possible ordered index sets $A=\{a_j\}_{j=1}^k$, $1\leq a_1<\cdots<a_k\leq m$, and $e_A$ denotes the 
		$k$-vector $e_A=e_{a_1}\cdots e_{a_k}$.\\
		For $|A|=1$ the Clifford numbers
		\begin{align*}
			\ux=\sum\limits_{j=1}^me_jx_j
		\end{align*}
		correspond to the vectors of $\mathbb{R}^m$. The product of two such Clifford vectors can be written as a sum of a scalar and a bivector, that is
		\begin{align*}
			\ux\uy=\Big(\sum_{j=1}^me_jx_j\Big)\Big(\sum_{k=1}^me_ky_k\Big)=\sum_{j<k}e_je_k(x_jy_k-x_ky_j)-\sum_{j=1}^mx_jy_j=\ux\wedge\uy-\la x,y\ra,
		\end{align*}
		splitting the algebraic product into a wedge and a dot product.	The dual of a Clifford vector takes the form
		\begin{align*}
			\upx=\sum\limits_{j=1}^me_j\partial_{x_j},
		\end{align*}
		which is called the Dirac operator. A null-solution of this operator, i.e. a differentiable function $f:\mathbb{R}^m\rightarrow\mathcal{C\ell}_m$
		satisfying $\upx f(x)=0$, is called (left-)monogenic. Because the Laplace operator $\Delta=-\upx^2$ is factorized
		by this Dirac operator, the theory of monogenic functions, which is called (Euclidean) Clifford analysis, can be considered a refinement of
		harmonic analysis. An introduction to this function theory can be found in \cite{red} and \cite{green}.\\
		\indent Note that $\upx$ can also act from the right on a function,
		in the sense that
		\begin{align*}
			\Big(f(x)\upx\Big)=\sum_{j=1}^m\big(\partial_{x_j}f(x)\big)e_j.
		\end{align*}
		The operators $\ux$ and $\upx$ are invariant under the spin group $Spin(m)$. Satisfying the (anti-commutator) relation
		\begin{align}
			\label{osp12}\big\{\ux,\upx\big\}=\ux\upx+\upx\ux=-2\Big(\mE+\frac{m}{2}\Big),
		\end{align}
		they moreover generate the Lie superalgebra $\mathfrak{osp}(1|2)$ (see \cite{howe}).\\
		\indent By expanding the notation of $k$-homogeneous polynomials $\cP_k$ to $\mathcal{C\ell}_m$-valued polynomials
		\begin{align*}
			P(x)=\sum_Ap_Ax^A, 
		\end{align*}
		with $p_A\in\mathcal{C\ell}_m$ and $|A|=k$, the space of spherical monogenics is defined as follows.
		\begin{definition}
		The space of monogenic $\mathcal{C\ell}_m$-valued polynomials of homogeneity $k$, the so-called spherical monogenics
		of degree $k$, is denoted by $\cM_k=\{M(x)|M\in\cP_k,\upx M(x)=0\}$.
		\end{definition}
		An important property of these spaces is the refinement of the Fischer decomposition (\ref{Fischer}), as any spherical harmonic allows for the decomposition
		\begin{align}
		\label{fischer2}
			\cH_k\otimes\mathcal{C\ell}_m=\cM_k\oplus\ux \cM_{k-1},
		\end{align}
		see e.g. \cite{fischer}. As spherical monogenics are also harmonic they are a special case of spherical harmonics and therefore orthogonal with respect to the 
		$\mathcal{C\ell}_m$-valued inner product on the sphere
		\begin{align*}
		\la P,Q\ra_{\mathbb{S}^{m-1}}=\frac{1}{\omega_{m-1}}\int_{\mathbb{S}^{m-1}}P(x)^\dagger Q(x)d\sigma.
		\end{align*}
		The Clifford conjugation $P(x)^\dagger=\sum\limits_Ae_A^\dagger\overline{p_A}(x)$ satisfies
		\begin{align*}
			e_j^\dagger=-e_j\\
			(XY)^\dagger=Y^\dagger X^\dagger,
		\end{align*}
		for Clifford numbers $X,Y\in\mathcal{C\ell}_m$. One may equally define a $\mathcal{C\ell}_m$-valued	Fischer inner product on $\cP_k$, that is
		\begin{align*}
			\la P,Q\ra_{\partial}=[P(\partial)^\dagger Q(x)]_{x=0},
		\end{align*}
		where $P(\partial)=\sum_Ap_A\partial_{x_1}^{a_1}\cdots\partial_{x_m}^{a_m}$ denotes the operator polynomial that is obtained by replacing $x_j$ with the 
		derivative $\partial_{x_j}$ in $P(x)$. When comparing	these two products one has the same proportionality as before.
		\begin{theorem}
			\label{FischerSphere1}
			For a homogeneous polynomial $P\in\cP_k$ and a spherical harmonic $Q\in\cH_k$ it holds that
			\begin{align*}
				\la P,Q\ra_{\partial}=2^k\Big(\frac{m}{2}\Big)_k\la P,Q\ra_{\mathbb{S}^{m-1}}.
			\end{align*}
			Moreover for two spherical harmonics of arbitrary order $H_k\in\cH_k$ and $Q_l\in\cH_l$, with $k\neq l$, one has that
			\begin{align*}
				2^k\Big(\frac{m}{2}\Big)_k\la H_k,Q_l\ra_{\mS^{m-1}}=\la H_k,Q_l\ra_\partial=0.
			\end{align*}
		\end{theorem}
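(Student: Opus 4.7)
The plan is to reduce the Clifford-valued statement to the scalar result already recalled earlier in the paper (the unnamed theorem attributed to \cite{xu}), by decomposing everything into components along a basis of $\mathcal{C\ell}_m$. Since $\{e_A\}_A$ is a $\mC$-basis of $\mathcal{C\ell}_m$, any $\mathcal{C\ell}_m$-valued polynomial $P\in\cP_k$ can be written uniquely as $P(x)=\sum_A e_A P_A(x)$ with each $P_A\in\cP_k$ scalar-valued, and similarly $Q(x)=\sum_B e_B Q_B(x)$. The first observation is that the monogenicity/harmonicity hypothesis on $Q$ descends component-wise: because the Laplacian is scalar and commutes with multiplication by the constants $e_B$, the condition $\Delta Q=0$ forces $\sum_B e_B \Delta Q_B=0$, i.e.\ each scalar $Q_B$ is a spherical harmonic of degree $k$.

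Next, I would expand both sides of the claimed identity using these decompositions. For the Fischer side, using the definition of the Clifford conjugation and linearity,
\begin{align*}
\la P,Q\ra_{\partial}
=\Big[\sum_{A,B}e_A^\dagger e_B\,\overline{P_A(\partial)}\,Q_B(x)\Big]_{x=0}
=\sum_{A,B}e_A^\dagger e_B\,\la P_A,Q_B\ra_{\partial}^{\mathrm{sc}},
\end{align*}
where $\la\cdot,\cdot\ra_\partial^{\mathrm{sc}}$ denotes the scalar Fischer product of the previous subsection. An analogous decomposition applies to the $L^2$-product on $\mS^{m-1}$, producing the same sum of $e_A^\dagger e_B$ weighted by the scalar spherical inner products $\la P_A,Q_B\ra_{\mS^{m-1}}^{\mathrm{sc}}$.

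The proof then finishes by applying the scalar version of the theorem from Section \ref{sec:1} to each pair $(P_A,Q_B)$: since $P_A\in\cP_k$ and $Q_B\in\cH_k$, we have $\la P_A,Q_B\ra_\partial^{\mathrm{sc}}=2^k(m/2)_k\la P_A,Q_B\ra_{\mS^{m-1}}^{\mathrm{sc}}$, and substituting back yields the claimed proportionality with the common factor $2^k(m/2)_k$ pulled outside the double sum. The second assertion (orthogonality for harmonics of different degrees) follows by the same device: each pair of components $(H_{k,A},Q_{l,B})$ lies in $\cH_k\times\cH_l$ with $k\neq l$, so the scalar orthogonality kills every term of $\sum_{A,B}e_A^\dagger e_B \la H_{k,A},Q_{l,B}\ra$.

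There is no real obstacle here; the only point requiring care is the bookkeeping of the Clifford conjugation, specifically checking that $P(\partial)^\dagger=\sum_A e_A^\dagger\,\overline{P_A(\partial)}$ (so that the scalar Fischer product appears naturally on the right) and that the factors $e_A^\dagger e_B$ are the \emph{same} on both sides so they factor out of the linear identity. Once that is verified, the result is a direct corollary of the scalar case.
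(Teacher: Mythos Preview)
Your argument is correct, and it is precisely the implicit justification the paper relies on: the theorem is stated without proof, as the evident $\mathcal{C\ell}_m$-valued extension of the scalar result recalled in Section~\ref{sec:1}, obtained by the component-wise decomposition you describe. There is nothing to add beyond the bookkeeping you already flagged.
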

		\mbox{}\\
		Using the Fischer inner product it is possible to show the duality of the vector $\ux$ and the Dirac operator $\upx$.
		\begin{lemma}
		\label{Duality1}
		On the space of $\mathcal{C\ell}_m$-valued polynomials $\cP$ it holds that
		\begin{align*}
			\la \upx P,Q\ra_{\partial}=-\la P,\ux Q\ra_{\partial}.
		\end{align*}
		\end{lemma}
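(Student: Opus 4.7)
The plan is to reduce the Dirac operator to its coordinate pieces and handle them using (i) the reversal rule for Clifford conjugation and (ii) the scalar adjoint identity between $x_j$ and $\partial_{x_j}$ for the Fischer inner product.

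First I would expand $\upx = \sum_{j=1}^m e_j \partial_{x_j}$ and use linearity to write
\begin{align*}
\la \upx P,Q\ra_\partial = \sum_{j=1}^m \la e_j \partial_{x_j} P, Q\ra_\partial.
\end{align*}
For each term, $e_j\partial_{x_j}P$ is the polynomial $\partial_{x_j}P$ multiplied on the left by the Clifford constant $e_j$, so in the operator substitution rule it simply becomes $e_j (\partial_{x_j}P)(\partial)$. Applying the Clifford conjugation rule $(XY)^\dagger = Y^\dagger X^\dagger$ together with $e_j^\dagger=-e_j$ gives
\begin{align*}
\big(e_j \partial_{x_j}P\big)(\partial)^\dagger = (\partial_{x_j}P)(\partial)^\dagger\, e_j^\dagger = -(\partial_{x_j}P)(\partial)^\dagger\, e_j,
\end{align*}
so that $\la e_j\partial_{x_j}P, Q\ra_\partial = -\la \partial_{x_j}P,\, e_j Q\ra_\partial$.

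The remaining ingredient is the scalar identity $\la \partial_{x_j}P, R\ra_\partial = \la P, x_j R\ra_\partial$, valid for Clifford-valued $P,R$. I would verify this by testing on a single monomial pair $P(x)=p\,x^A$, $R(x)=r\,x^C$ (with $p,r\in\mathcal{C\ell}_m$), where both sides reduce via $\partial^A x^A\big|_{x=0}=A!$ to $\delta_{A,C+e_j}\,A!\,p^\dagger r$; equivalently, one can read it off the Leibniz identity $[\partial^A(x_jR)]_{x=0} = \alpha_j [\partial^{A-e_j}R]_{x=0}$. Applying this with $R = e_jQ$ yields
\begin{align*}
\la e_j\partial_{x_j}P, Q\ra_\partial = -\la P,\, x_j e_j Q\ra_\partial = -\la P,\, e_j x_j Q\ra_\partial,
\end{align*}
since $e_j$ and $x_j$ commute. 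Summing over $j$ produces $-\la P,\ux Q\ra_\partial$.

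I do not anticipate a genuine obstacle; the only subtlety is bookkeeping the Clifford conjugation so the sign $e_j^\dagger = -e_j$ is correctly extracted and not double-counted against the scalar adjoint step. The whole argument is two lines once one is comfortable that multiplication by $e_j$ on the polynomial side passes to multiplication by $e_j$ on the operator side, while Clifford conjugation and reversal produce the crucial minus sign.
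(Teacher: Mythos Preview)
Your proof is correct and follows essentially the same approach as the paper: both arguments reduce to monomials and combine the Clifford conjugation rule $e_j^\dagger=-e_j$ with the basic Fischer adjointness between $x_j$ and $\partial_{x_j}$. The only difference is organizational --- the paper computes $\la P,\ux Q\ra_\partial$ directly via a single Leibniz expansion on a monomial $P=e_A x^\alpha$, whereas you first isolate the scalar identity $\la \partial_{x_j}P,R\ra_\partial=\la P,x_jR\ra_\partial$ and then sum over $j$; the underlying computation is the same.
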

		\begin{proof}
		Because of linearity it is sufficient to consider a monomial $P(x)=e_Ax_1^{\alpha_1}\cdots x_m^{\alpha_m}\in\cP_{k+1}$ with $\alpha_1+\cdots+\alpha_m=k+1$
		and an arbitrary $k$-homogeneous polynomial $Q(x)\in\cP_k$, hence
		\begin{align*}
		\la P,\ux Q\ra_\partial&=\Big[P(\partial)^\dagger\Big(\ux Q(x)\Big)\Big]_{x=0}\\
			&=\Big[e_A^\dagger\partial_{x_1}^{\alpha_1}\cdots \partial_{x_m}^{\alpha_m}\Big(\sum_{j=1}^me_jx_jQ(x)\Big)\Big]_{x=0}\\
			&=\Big[e_A^\dagger\sum_{j=1}^m\partial_{x_1}^{\alpha_1}\cdots\partial_{x_j}^{\alpha_j-1}\cdots\partial_{x_m}^{\alpha_m}\Big(\alpha_je_jQ(x)\Big)\Big]_{x=0}
			+\Big[e_A^\dagger\ux\big(\partial_{x_1}^{\alpha_1}\cdots\partial_{x_m}^{\alpha_m}Q(x)\big)\Big]_{x=0},
		\end{align*}
		where the last term vanishes and the first term is exactly the Fischer dual of $\upx^\dagger P(x)$, that is
		\begin{align*}
			P(x)^\dagger\upx=e_A^\dagger \sum_{j=1}^m(x_1^{\alpha_1}\cdots x_j^{\alpha_j}\cdots x_m^{\alpha_m})e_j\partial_{x_j}
			=e_A^\dagger \sum_{j=1}^m(x_1^{\alpha_1}\cdots x_j^{\alpha_j-1}\cdots x_m^{\alpha_m})\alpha_je_j,
		\end{align*}
		acting on $Q(x)$, and hence
		\begin{align*}
			\la P,\ux Q\ra_\partial=\la \upx^\dagger P,Q\ra_\partial=-\la\upx P,Q\ra_\partial.
		\end{align*}
		\end{proof}
		\subsection{Reproducing kernel}
		As for the space of spherical harmonics of order $k$ there is also a reproducing kernel for spherical monogenics. This kernel $\widetilde{K}_k^m(x,y)$	can be 
		obtained (up to a constant) by letting two Dirac operators with respect to $\ux$ and $\uy$ act on the reproducing kernel of spherical harmonics $K_{k+1}^m(x,y)$
		from the left and right respectively. We start with the following lemma.
		\begin{lemma}
			\label{diract}
			The action of the Dirac operator $\upx$ on the spherical variable $t=\frac{\la x,y\ra}{|x||y|}$ is
			\begin{align*}
				\upx t=\uy\frac{1}{|x||y|}-\ux\frac{t}{|x|^2}.
			\end{align*}
		\end{lemma}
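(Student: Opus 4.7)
The plan is a direct calculation from the definitions, with no real obstacle. Since $t = \langle x,y\rangle/(|x||y|)$ is a scalar-valued function and $\upx = \sum_{j=1}^m e_j \partial_{x_j}$, the task reduces to computing $\partial_{x_j} t$ for each $j$ and then reassembling the Clifford vector.

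First I would apply the product rule to $t = \langle x,y\rangle \cdot (|x||y|)^{-1}$. The numerator gives $\partial_{x_j} \langle x,y\rangle = y_j$, and for the denominator I would use the standard identity $\partial_{x_j} |x| = x_j/|x|$, which yields $\partial_{x_j} (|x||y|)^{-1} = -x_j/(|x|^3|y|)$. Combining these produces
\begin{align*}
\partial_{x_j} t = \frac{y_j}{|x||y|} - \frac{\langle x,y\rangle\, x_j}{|x|^3|y|} = \frac{y_j}{|x||y|} - \frac{t\, x_j}{|x|^2}.
\end{align*}

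Finally, I would multiply by $e_j$ and sum over $j$. Since $t$ is scalar-valued it commutes through the $e_j$'s, so
\begin{align*}
\upx t = \sum_{j=1}^m e_j \partial_{x_j} t = \frac{1}{|x||y|}\sum_{j=1}^m e_j y_j - \frac{t}{|x|^2}\sum_{j=1}^m e_j x_j = \frac{\uy}{|x||y|} - \frac{t\, \ux}{|x|^2},
\end{align*}
which is the claimed formula. There is no hard step here; the identity is a bookkeeping exercise combining the Euclidean derivative of $|x|$ with the definition of $\upx$.
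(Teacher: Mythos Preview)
Your computation is correct and is exactly what the paper has in mind: it states only that ``the proof of the lemma follows after some computation from the chain rule,'' and your argument supplies precisely those details.
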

		The proof of the lemma follows after some computation from the chain rule. Now we can compute the action of the two Dirac operators on the harmonic kernel 
		$K_{k+1}^m$ in two steps, beginning with $\upx$ which results in a vector-valued function.
		\begin{lemma}
			\label{action1}
			Letting $\upx$ act on the harmonic kernel gives
			\begin{align*}
				\upx(K_{k+1}^m)=(m+2k)(\uy|x|^k|y|^kC_k^{\mu+1}(t)-\ux|x|^{k-1}|y|^{k+1}C_{k-1}^{\mu+1}(t)).
			\end{align*}
		\end{lemma}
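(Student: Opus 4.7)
The plan is to differentiate the explicit formula
$$K_{k+1}^m(x,y)=\frac{k+1+\mu}{\mu}|x|^{k+1}|y|^{k+1}C_{k+1}^\mu(t)$$
from Theorem \ref{harmkernel1} by a direct application of the Leibniz rule, treating $|y|^{k+1}$ as a constant in $x$ and computing the $x$-derivatives of $|x|^{k+1}$ and $C_{k+1}^\mu(t)$ separately. For the first factor, a componentwise computation yields $\upx|x|^{k+1}=(k+1)|x|^{k-1}\ux$. For the Gegenbauer factor, the chain rule combined with Lemma \ref{diract} gives
$$\upx C_{k+1}^\mu(t)=\bigl(C_{k+1}^\mu\bigr)'(t)\Bigl(\uy\frac{1}{|x||y|}-\ux\frac{t}{|x|^2}\Bigr),$$
and then the derivative identity \eqref{Gegen3} rewrites this as $2\mu C_k^{\mu+1}(t)(\uy/(|x||y|)-\ux t/|x|^2)$.

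Putting these pieces together I would obtain
$$\upx K_{k+1}^m=\frac{k+1+\mu}{\mu}|y|^{k+1}\Bigl[(k+1)|x|^{k-1}\ux\,C_{k+1}^\mu(t)+2\mu|x|^{k+1}C_k^{\mu+1}(t)\Bigl(\uy\tfrac{1}{|x||y|}-\ux\tfrac{t}{|x|^2}\Bigr)\Bigr],$$
and then collect the $\uy$- and $\ux$-components. Using $m+2k=2(k+1+\mu)$, the $\uy$-coefficient simplifies immediately to $(m+2k)|x|^k|y|^k C_k^{\mu+1}(t)$, which matches the claim.

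The only real content lies in the $\ux$-coefficient, which after simplification equals
$$\frac{k+1+\mu}{\mu}|x|^{k-1}|y|^{k+1}\bigl[(k+1)C_{k+1}^\mu(t)-2\mu t\,C_k^{\mu+1}(t)\bigr].$$
To bring this to the desired form $-(m+2k)|x|^{k-1}|y|^{k+1}C_{k-1}^{\mu+1}(t)$ I invoke Lemma \eqref{Gegen4} with $k$ replaced by $k+1$, namely $(k+1)C_{k+1}^\mu(t)-t\bigl(C_{k+1}^\mu\bigr)'(t)=-2\mu C_{k-1}^{\mu+1}(t)$, and substitute $\bigl(C_{k+1}^\mu\bigr)'(t)=2\mu C_k^{\mu+1}(t)$ from \eqref{Gegen3}. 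This is the only nontrivial step, and it is exactly the reason the auxiliary identity \eqref{Gegen4} was established in the preliminaries. The resulting factor of $-2(k+1+\mu)=-(m+2k)$ combines with $|x|^{k-1}|y|^{k+1}C_{k-1}^{\mu+1}(t)$ to complete the proof.
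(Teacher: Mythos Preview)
Your proof is correct and follows essentially the same approach as the paper: apply the product rule together with Lemma~\ref{diract}, collect the $\ux$- and $\uy$-parts, and simplify using the Gegenbauer identities \eqref{Gegen3} and \eqref{Gegen4}. The only cosmetic difference is that the paper keeps $(C_{k+1}^\mu)'$ unexpanded until after collecting terms, whereas you substitute \eqref{Gegen3} one step earlier.
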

		\begin{proof}
			When applying the Dirac operator $\upx$ to $K_{k+1}^m(x,y)$ we get
			\begin{align*}
				\upx(K_{k+1}^m(x,y))&=\upx\Big(\frac{\mu+k+1}{\mu}|x|^{k+1}|y|^{k+1}C_{k+1}^{\mu}(t)\Big)\\
				&=\frac{\mu+k+1}{\mu}\Big(\ux|x|^{k-1}|y|^{k+1}(k+1)C_{k+1}^\mu(t)+|x|^{k+1}|y|^{k+1}\Big(\uy\frac{1}{|x||y|}
				-\ux\frac{t}{|x|^2}\Big)\big(C_{k+1}^\mu(t)\big)'	\Big),
			\end{align*}
			where we used Lemma \ref{diract}. Collecting the terms with respect to the vectors $\ux$ and $\uy$ yields
			\begin{align*}
				\upx(K_{k+1}^m(x,y))&=\frac{\mu+k+1}{\mu}\Big(\ux|x|^{k-1}|y|^{k+1}\Big((k+1)C_{k+1}^\mu(t)-t\big(C_{k+1}^\mu(t)\big)'\Big)
				+\uy|x|^k|y|^k\big(C_{k+1}^\mu(t)\big)'\Big),
			\end{align*}
			where we can apply the relations (\ref{Gegen3}) and (\ref{Gegen4})  to get the result.
		\end{proof}
		In the second step we apply $\upy$ from the right to the result of the previous lemma, resulting in a sum of a scalar and a bivector.
		\begin{theorem}
			\label{action2}
			The action of two Dirac operators on the reproducing kernel $K_{k+1}^m(x,y)$ of spherical harmonics yields
			\begin{align*}
				&\upx K_{k+1}^m(x,y)\upy\\
				&=-(m+2k)^2\Big(\frac{2\mu+k}{2\mu}|x|^k|y|^kC_k^\mu(t)+\ux\wedge\uy|x|^{k-1}|y|^{k-1}C_{k-1}^{\mu+1}(t)\Big).
			\end{align*}
		\end{theorem}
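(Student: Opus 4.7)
The plan is to take the formula from Lemma \ref{action1} and apply the Dirac operator $\upy$ from the right, then reorganize the result using Clifford anticommutation and Gegenbauer identities. Writing $\upx K_{k+1}^m = (m+2k)(\uy g_1 - \ux g_2)$ with $g_1 = |x|^k|y|^k C_k^{\mu+1}(t)$ and $g_2 = |x|^{k-1}|y|^{k+1}C_{k-1}^{\mu+1}(t)$, I need two simple book-keeping identities for the right action of $\upy$: since the $e_j$ anticommute, $(\uy f)\upy = -mf + \uy(\upy f)$ for any scalar $f$, while $(\ux f)\upy = \ux(\upy f)$ because $\ux$ is $y$-independent. The analogue of Lemma \ref{diract} in the $y$-variable, namely $\upy t = \ux/(|x||y|) - \uy\, t/|y|^2$, follows by the chain rule.

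With these tools I would compute $\upy g_1$ and $\upy g_2$, converting derivatives of $C_k^{\mu+1}$ and $C_{k-1}^{\mu+1}$ into Gegenbauers of order $\mu+2$ via the derivative identity (\ref{Gegen3}) and its companion (\ref{Gegen4}), both taken at parameter $\mu+1$. After substitution the result reorganizes as a sum of pieces scaled by $1$, $\ux^2 = -|x|^2$, $\uy^2 = -|y|^2$, $\ux\uy$, and $\uy\ux$. Rewriting $\ux\uy = -\la x,y\ra + \ux\wedge\uy$, $\uy\ux = -\la x,y\ra - \ux\wedge\uy$ and $\la x,y\ra = t|x||y|$ cleanly splits the expression into a scalar part and a bivector part proportional to $\ux\wedge\uy|x|^{k-1}|y|^{k-1}$.

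The bivector part assembles into the combination $-2(\mu+1)C_{k-1}^{\mu+2}(t) + 2(\mu+1)C_{k-3}^{\mu+2}(t) - 2C_{k-1}^{\mu+1}(t)$, which collapses via identity (\ref{Gegen1}) at the shifted parameters $\mu \mapsto \mu+1$, $k \mapsto k-1$ (so that $(\mu+1)[C_{k-1}^{\mu+2} - C_{k-3}^{\mu+2}] = (k+\mu)C_{k-1}^{\mu+1}$) down to $-(m+2k)C_{k-1}^{\mu+1}(t)$, which yields exactly the claimed bivector contribution after multiplication by the overall factor $(m+2k)$. The main obstacle is the scalar part, which must collapse to a single multiple of $C_k^\mu(t)$ with the delicate coefficient $-(m+2k)(2\mu+k)/(2\mu)$. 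I expect this to require reducing all $C^{\mu+2}$ terms to $C^{\mu+1}$ via two applications of (\ref{Gegen1}), eliminating the remaining factors of $t$ in front of $C_{k-1}^{\mu+1}(t)$ and $C_{k-1}^{\mu+2}(t)$ using the three-term recurrence (\ref{GegenRec}), and finally one more application of (\ref{Gegen1}) to step from order $\mu+1$ down to $\mu$, with careful bookkeeping to consolidate everything into the stated closed form.
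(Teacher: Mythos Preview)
Your proposal is correct and follows essentially the same route as the paper: start from Lemma \ref{action1}, apply $\upy$ from the right term by term using the analogue of Lemma \ref{diract} in $y$, split into scalar and bivector parts via $\ux\uy = -\la x,y\ra + \ux\wedge\uy$, and simplify with Gegenbauer identities. Your bivector computation and use of (\ref{Gegen1}) exactly mirror the paper's; the only cosmetic difference is that for the scalar part the paper handles the emerging $(1-t^2)$ factor directly via (\ref{Gegen2}) before applying (\ref{Gegen1}) and (\ref{Gegen4}), whereas you propose to kill the stray $t$-factors with the three-term recurrence (\ref{GegenRec}) instead --- an equivalent bookkeeping choice.
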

		\begin{proof}
			Letting $\upy$ act from the right to the first term of Lemma \ref{action1} gives
			\begin{align*}
				(\uy|x|^k|y|^kC_k^{\mu+1})\upy=-m|x|^k|y|^kC_k^{\mu+1}+\uy\uy|x|^k|y|^{k-2}kC_k^{\mu+1}+\uy|x|^k|y|^k\Big(\ux\frac{1}{|x||y|}
				-\uy\frac{t}{|y|^2}\Big)\big(C_k^{\mu+1}\big)',
			\end{align*}
			where we used Lemma \ref{diract} for the Dirac operator with respect to $\uy$. Collecting the terms with respect to the scalar and bivector part yields
			\begin{align}
				\label{Eterm1}
				(\uy|x|^k|y|^kC_k^{\mu+1})\upy=-(m+k)|x|^k|y|^kC_k^{\mu+1}-2(\mu+1)\ux\wedge\uy|x|^{k-1}|y|^{k-1}C_{k-1}^{\mu+2}.
			\end{align}
			For the action of $\upy$ on the second term of Lemma \ref{action1} we get
			\begin{align*}
				-(\ux|x|^{k-1}|y|^{k+1}C_{k-1}^{\mu+1})\upy=-\ux\uy|x|^{k-1}|y|^{k-1}(k+1)C_{k-1}^{\mu+1}-\ux|x|^{k-1}|y|^{k+1}\Big(\ux\frac{1}{|x||y|}
				-\uy\frac{t}{|y|^2}\Big)\big(C_{k-1}^{\mu+1})'.
			\end{align*}
			Collecting again with respect to the scalar and bivector part results in
			\begin{align}
				-(\ux|x|^{k-1}|y|^{k+1}C_{k-1}^{\mu+1})\upy=&-\ux\wedge\uy|x|^{k-1}|y|^{k-1}\big((k+1)C_{k-1}^{\mu+1}-t\big(C_{k-1}^{\mu+1}\big)'\big)\nonumber\\
				&+|x|^k|y|^k\Big(2(\mu+1)(1-t^2)C_{k-2}^{\mu+2}+t(k+1)C_{k-1}^{\mu+1}\Big)\nonumber\\
				\label{Eterm2}=&-\ux\wedge\uy|x|^{k-1}|y|^{k-1}\big(2C_{k-1}^{\mu+1}-2(\mu+1)C_{k-3}^{\mu+2}\big)\\
				&+|x|^k|y|^k\Big(\frac{(k+2\mu)(k+2\mu+1)}{2(k+\mu)}C_{k-2}^{\mu+1}-\frac{(k-1)k}{2(k+\mu)}C_k^{\mu+1}+\frac{k+1}{2\mu}t\big(C_k^\mu)'\Big),\nonumber
			\end{align}
			where we used relation (\ref{Gegen4}) on the bivector part and relations (\ref{Gegen2}) and (\ref{Gegen3}) on the scalar part.
			By combining equations (\ref{Eterm1}) and (\ref{Eterm2}) we get
			\begin{align*}
				\upx(K_{k+1}^m)\upy&=(m+2k)(\uy|x|^k|y|^kC_k^{\mu+1}-\ux|x|^{k-1}|y|^{k+1}C_{k-1}^{\mu+1})\upy\\
				&=(m+2k)|x|^k|y|^k\bigg(\Big(-\frac{2(k+\mu+1)(k+2\mu)}{2(k+\mu)}-\frac{k(k+1)}{2(k+\mu)}\Big)\Big(C_k^{\mu+1}-C_{k-2}^{\mu+1}\Big)\\
				&-(k+1)C_{k-2}^{\mu+1}+\frac{k+1}{2\mu}t\big(C_k^\mu)'\bigg)\\
				&-(m+2k)\ux\wedge\uy|x|^{k-1}|y|^{k-1}\Big(2C_{k-1}^{\mu+1}+2(\mu+1)\big(C_{k-1}^{\mu+2}-C_{k-3}^{\mu+2}\big)\Big).
			\end{align*}
			Applying relation (\ref{Gegen1}) on both the scalar and bivector part yields
			\begin{align*}
				\upx(K_{k+1}^m)\upy&=(m+2k)|x|^k|y|^k\bigg(-\frac{2(k+\mu+1)(k+2\mu)}{2\mu}C_k^\mu\\
				&-(k+1)C_{k-2}^{\mu+1}-\frac{k+1}{2\mu}\big(kC_k^\mu-t\big(C_k^\mu)'\big)\bigg)\\
				&-(m+2k)\ux\wedge\uy|x|^{k-1}|y|^{k-1}2(\mu+k+1)C_{k-1}^{\mu+1}.
			\end{align*}
			The second and third term of the scalar part cancel each other out due to relation (\ref{Gegen4}). By replacing $2(k+\mu+1)=(m+2k)$ the proof 
			is concluded.
		\end{proof}
		In the following theorem we prove that the result of Theorem \ref{action2} is indeed (up to a constant) the reproducing kernel of spherical monogenics.
		\begin{theorem}
			\label{RepKernel1}
			For any $M_l\in\cM_l$ it holds that
			\begin{align}
			\label{rep1}
			\la\widetilde{K}_k^m(\cdot,y),M_l(\cdot)\ra_{\mS^{m-1}}=\delta_{kl}M_l(y),
			\end{align}
			as well as
			\begin{align}
				\label{rep2}
				\la \widetilde{K}_k^m,\ux M_l\ra_{\mS^{m-1}}=0
			\end{align}
			with the reproducing kernel
			\begin{align*}
				\widetilde{K}_k^m(x,y)&=c_k\upx K_{k+1}^m(x,y)\upy\\
				&=\frac{2\mu+k}{2\mu}|x|^k|y|^kC_k^\mu(t)+(\ux\wedge\uy)|x|^{k-1}|y|^{k-1}C_{k-1}^{\mu+1}(t),
			\end{align*}
			where $c_k=-\frac{1}{(m+2k)^2}$, $\mu=\frac{m}{2}-1$ and $t=\frac{\la x,y\ra}{|x||y|}$.
		\end{theorem}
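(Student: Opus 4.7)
The explicit closed-form expression in the statement is just $c_k$ times the result of Theorem \ref{action2}, so the real content is the reproducing identities (\ref{rep1}) and (\ref{rep2}). The plan is to reduce both to the Fischer inner product via Theorem \ref{FischerSphere1}, and then use the duality Lemma \ref{Duality1} to transfer the left Dirac operator $\upx$ sitting inside $\widetilde{K}_k^m$ onto the second argument, at which point the standard reproducing property of the harmonic kernel $K_{k+1}^m$ takes over.

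First I notice that $\widetilde{K}_k^m(\cdot,y)$ lies in $\cM_k$: since $\upx$ and $\upy$ commute and $\upx^2=-\Delta$, one has $\upx\widetilde{K}_k^m=c_k\upx^2 K_{k+1}^m\upy=0$ because $K_{k+1}^m(\cdot,y)$ is harmonic in $x$. Thus $\widetilde{K}_k^m(\cdot,y)\in\cH_k$, and (\ref{rep1}), (\ref{rep2}) are automatic whenever the degree $l$ (respectively $l+1$) differs from $k$, by orthogonality of spherical harmonics of different degrees (Theorem \ref{FischerSphere1}).

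The only nontrivial case of (\ref{rep1}) is $l=k$. Converting to the Fischer inner product via Theorem \ref{FischerSphere1} and applying Lemma \ref{Duality1} in the $x$-variable to $P=K_{k+1}^m(\cdot,y)\upy$ gives
\[
\la\widetilde{K}_k^m(\cdot,y),M_k\ra_\partial=-c_k\la K_{k+1}^m(\cdot,y)\upy,\ux M_k\ra_\partial.
\]
Since $\upy$ acts only in the parameter $y$ and multiplies by $e_j$ on the right, a short bookkeeping using $e_j^\dagger=-e_j$ pulls it out of the $x$-Fischer product as $-\upy$ acting from the left in $y$. A direct check shows $\ux M_k\in\cH_{k+1}\otimes\mathcal{C\ell}_m$ (its Laplacian reduces to $2\upx M_k=0$), so the scalar harmonic kernel of degree $k+1$ reproduces it: $\la K_{k+1}^m(\cdot,y),\ux M_k\ra_\partial=2^{k+1}\big(\tfrac{m}{2}\big)_{k+1}\uy M_k(y)$. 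Finally the $\mathfrak{osp}(1|2)$ relation (\ref{osp12}) yields $\upy(\uy M_k(y))=-(m+2k)M_k(y)$, and with $c_k=-1/(m+2k)^2$ together with $2^{k+1}\big(\tfrac{m}{2}\big)_{k+1}=2^k\big(\tfrac{m}{2}\big)_k(m+2k)$ the constants reconcile to produce exactly $2^k\big(\tfrac{m}{2}\big)_k M_k(y)$, i.e.\ $M_k(y)$ after converting back to the spherical inner product.

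The only nontrivial case of (\ref{rep2}) is $l=k-1$, for which the same chain of identities produces $\la K_{k+1}^m(\cdot,y)\upy,\ux(\ux M_{k-1})\ra_\partial=-\la K_{k+1}^m(\cdot,y)\upy,|x|^2 M_{k-1}\ra_\partial$ via $\ux^2=-|x|^2$, and the standard Fischer adjoint relation $\la P,|x|^2 Q\ra_\partial=\la\Delta P,Q\ra_\partial$ annihilates this because $K_{k+1}^m$ is harmonic in $x$. The main obstacle is not any Gegenbauer or Jacobi algebra (all of that lives in Theorem \ref{action2}) but rather the sign bookkeeping for the right-action of $\upy$, the Clifford conjugation inside the Fischer product, and the final constant reconciliation via (\ref{osp12}).
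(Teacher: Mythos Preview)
Your proposal is correct and follows essentially the same strategy as the paper: pass to the Fischer inner product via Theorem \ref{FischerSphere1}, use the duality of Lemma \ref{Duality1} to move $\upx$ onto the second argument, invoke the harmonic reproducing property of $K_{k+1}^m$ on $\ux M_k$, and finish with the $\mathfrak{osp}(1|2)$ relation (\ref{osp12}) and the constant identity $2^{k+1}(m/2)_{k+1}=2^k(m/2)_k(m+2k)$. The only organizational differences are that the paper keeps $l$ general throughout (so that $\delta_{kl}$ appears directly from the harmonic reproduction rather than from a separate orthogonality argument), and for (\ref{rep2}) the paper pushes $\ux$ back to the \emph{left} to obtain $\upx^2 K_{k+1}^m=-\Delta K_{k+1}^m=0$, whereas you push $\upx$ to the right to obtain $\ux^2 M_{k-1}=-|x|^2 M_{k-1}$ and then invoke the adjoint pair $(|x|^2,\Delta)$; these are equivalent uses of the same duality.
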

		\begin{proof}
			For the Fischer inner product of $\widetilde{K}_k^m(x,y)$ and a spherical monogenic $M_l(x)\in\cM_l$ of order $l$ we have
			\begin{align*}
				\la\widetilde{K}_{k}^m,M_l\ra_{\partial}&=\la c_k\upx K_{k+1}^m\upy,M_l\ra_\partial\\
				&=c_k\Big[\Big(\upy^\dagger K_{k+1}^m(x,y)\upx^\dagger\Big)_{x_j\leftrightarrow\partial_{x_j}}M_l(x)\Big]_{x=0}.
			\end{align*}
			Because the inner product is with respect to $x$, the Dirac operator $\upy^\dagger$ can be pulled out. This leads to
			\begin{align*}
				\la\widetilde{K}_{k}^m,M_l\ra_{\partial}&=c_k\upy^\dagger\la(\upx K_{k+1}^m),M_l\ra_\partial\\
				&=-c_k\upy^\dagger\la K_{k+1}^m,\ux M_l\ra_\partial\\
				&=c_k\upy\la K_{k+1}^m,\ux M_l\ra_\partial,
			\end{align*}
			where we made use of the duality of $\upx$ and $\ux$ (cf. Lemma \ref{Duality1}). Using Theorem \ref{FischerSphere1} we can write this 
			in terms of the spherical inner product
			\begin{align*}
				\la\widetilde{K}_{k}^m,M_l\ra_{\partial}&=-c_k2^{k+1}\Big(\frac{m}{2}\Big)_{k+1}\upy\la K_{k+1}^m,\ux M_l\ra_{\mS^{m-1}}.
			\end{align*}
			For a spherical monogenic $M_l(x)$ of order $l$ the function $\ux M_l(x)$ is still harmonic and homogeneous of order $l+1$. It therefore is reproduced
			by the kernel $K_{k+1}^m(x,y)$ for $l=k$, hence
			\begin{align*}
				\la\widetilde{K}_{k}^m,M_l\ra_{\partial}=c_k2^{k+1}\Big(\frac{m}{2}\Big)_{k+1}\upy\Big(\delta_{kl}\uy M_l(y)\Big).
			\end{align*}
			Now letting the Dirac operator $\upy$ act on $\uy M_l(y)$, using (\ref{osp12}), we get
			\begin{align*}
				\la\widetilde{K}_{k}^m,M_l\ra_{\partial}&=c_k2^{k+1}\Big(\frac{m}{2}\Big)_{k+1}\delta_{kl}\big(-mM_l(y)-2\mE M_l(y)-\uy\big(\upy M_l(y)\big)\Big)\\
				&=-c_k2^{k+1}\Big(\frac{m}{2}\Big)_{k+1}(m+2k)\delta_{kl}M_l(y),
			\end{align*}
			where the last equality holds due to the monogeneity and homogeneity of $M_l(x)$.	For the spherical inner product of $\widetilde{K}_k^m(x,y)$
			and $M_l(x)$ we therefore have, using Theorem \ref{FischerSphere1},
			\begin{align*}
				\la \widetilde{K}_k^m,M_l\ra_{\mS^{m-1}}&=-\frac{1}{2^{k}\Big(\frac{m}{2}\Big)_{k}}c_k2^{k+1}\Big(\frac{m}{2}\Big)_{k+1}(m+2k)\delta_{kl}M_l(y)\\
				&=-c_k(m+2k)^2\delta_{kl}M_l(y),
			\end{align*}
			which completes the proof of (\ref{rep1}). For (\ref{rep2}) we proceed as follows.\\
			As before we consider the Fischer inner product of $\widetilde{K}_k^m$ and $\ux M_l$
			\begin{align*}
			\la \widetilde{K}_k^m,\ux M_l\ra_\partial&=c_k\upy^\dagger\la\upx K_{k+1}^m,\ux M_l\ra_\partial\\
				&=-c_k\upy^\dagger\la\upx\upx K_{k+1}^m,M_l\ra_\partial\\
				&=c_k\upy^\dagger\la\Delta K_{k+1}^m,M_l\ra_\partial\\
				&=c_k\upy^\dagger\la 0,M_l\ra_\partial=0,
			\end{align*}
			where we used the duality of $\ux$ and $\upx$, the fact that $-\Delta=\upx^2$ and the harmonicity of $K_{k+1}^m$. The statement then follows by the proportionality of 
			the Fischer and the spherical inner product as stated in Theorem \ref{FischerSphere1}.
		\end{proof}
			
		\begin{remark}
			Note that in \cite{deform} the reproducing kernel on $\cM_k$ is given as
			\begin{align*}
				Z_k(x,y)=\frac{2\mu+k}{2\mu}|x|^k|y|^kC_k^\mu(t)-(\ux\wedge\uy)|x|^{k-1}|y|^{k-1}C_{k-1}^{\mu+1}(t),
			\end{align*}
			with a negative bivector part. The reason for this is the different notation of the reproduction property
			\begin{align*}
				\frac{1}{\omega_{m-1}}\int_{\mS^{m-1}}Z_k(x,y)M_l(x)d\sigma(x)=\delta_{kl}M_l(y),
			\end{align*}
			that differs from ours in the conjugation of the first argument. We have indeed that
			\begin{align*}
				\big(K_k^m(x,y)\big)^\dagger&=\frac{2\mu+k}{2\mu}|x|^k|y|^kC_k^\mu(t)+(\ux\wedge\uy)^\dagger|x|^{k-1}|y|^{k-1}C_{k-1}^{\mu+1}(t)\\
				&=\frac{2\mu+k}{2\mu}|x|^k|y|^kC_k^\mu(t)-(\ux\wedge\uy)|x|^{k-1}|y|^{k-1}C_{k-1}^{\mu+1}(t)=Z_k(x,y).
			\end{align*}
		\end{remark}
\section{Spherical h-monogenics}
\label{sec:3}
	\subsection{Hermitian Clifford analysis}
		Taking the dimension of the underlying vector space to be even, i.e. $m=2n$, we now consider the complex Clifford algebra
		$\mC_{2n}=\mathcal{C\ell}_{2n}\oplus i\mathcal{C\ell}_{2n}$. By applying a complex structure $J\in SO(2n,\mR)$ on the elements of $\mC_{2n}$ one generates the so-called
		Hermitian setting of Clifford analysis, see \cite{fundaments,fundaments2,howe,fischer}. Particularly one chooses $J$ according to its action on the basis vectors 
		$e_1,\cdots,e_{2n}$ as
		\begin{align*}
			J[e_j]=-e_{n+j}\hspace{15pt}J[e_{n+j}]=e_j\hspace{15pt}j=1,\cdots,n.
		\end{align*}
		There are two projection operators $\frac{1}{2}(1\pm iJ)$ related to this complex structure. They act on the basis elements as\\
		\begin{alignat*}{7}
			f_j&=&&\frac{1}{2}(1+iJ)[e_j]&&=&&\frac{1}{2}(e_j-ie_{n+j})&\hspace{15pt}&j=1,\cdots,n\\
			f_j^\dagger&=&-&\frac{1}{2}(1-iJ)[e_{n+j}]&&=&-&\frac{1}{2}(e_j+ie_{n+j})&&j=1,\cdots,n.
		\end{alignat*}
		These new Witt basis elements satisfy the Grassmann identities
		\begin{align*}
			f_jf_k^\dagger+f_k^\dagger f_j=\delta_{jk}\hspace{20pt}j,k=1,\cdots,n
		\end{align*}
		and the duality relations
		\begin{align}
		\label{witt}
			f_jf_k+f_kf_j=f_j^\dagger f_k^\dagger+f_k^\dagger f_j^\dagger=0\hspace{20pt}j,k=1,\cdots,n.
		\end{align}
		When applying these projection operators on the Clifford vectors $\ux\in\mathcal{C\ell}_{2n}$ one obtains the Hermitian Clifford vectors
		\begin{align*}
			\uz=\frac{1}{2}(1+iJ)[\ux]&=\sum\limits_{j=1}^nf_jz_j
		\end{align*}
		as well as their Hermitian conjugated counterparts
		\begin{align*}
			\uz^\dagger=-\frac{1}{2}(1-iJ)[\ux]&=\sum\limits_{j=1}^nf_j^\dagger\zc_j.
		\end{align*}
		Similarly to their action on the Clifford vectors one can apply the two projection operators on the Dirac operator, yielding two new complex conjugated Dirac operators
		\begin{align*}
			\upz&=-\frac{1}{4}(1-iJ)[\upx]=\sum\limits_{j=1}^nf_j^\dagger \partial_{z_j},\\
			\upzd&=\frac{1}{4}(1+iJ)[\upx]=\sum\limits_{j=1}^nf_j\dbar_{z_j}.
		\end{align*}
		Simultaneous null-solutions of these two new operators, i.e. $\mC_{2n}$-valued functions $g(z)$ such that
		\begin{align*}
			\upz g=0=\upzd g,
		\end{align*}
		are now called Hermitian (or h-) monogenic functions.\\
		\begin{remark}
		\label{nil}
			Note that the Hermitian vectors $\uz$, $\uzd$ and Dirac operators $\upz$, $\upzd$ are nilpotent, that is
			\begin{align*}
				\uz\uz=\sum_{j=1}^nf_jz_j\sum_{k=1}^nf_kz_k=\sum_{j=1}^n\sum_{k=1}^n(f_jf_k+f_kf_j)z_jz_k=0,
			\end{align*}
			because of the duality relations (\ref{witt}).
		\end{remark}
		\indent Applying the Hermitian Dirac operator $\upz$ from the left to the vector $\uz$ results in a constant Clifford number called $\beta$.
		\begin{align*}
			\beta&=\upz\uz=\sum_{j=1}^n\sum_{k=1}^nf_j^\dagger\partial_{z_j}f_kz_k=\sum_{j=1}^nf_j^\dagger f_j.
		\end{align*}
		\begin{lemma}
		\label{beta}
		One has the symmetry relations
		\begin{align*}
			\upz\uz&=\Big(\uzd\upzd\Big)=\beta,\\
			\Big(\uz\upz\Big)&=\upzd\uzd=n-\beta,
		\end{align*}
		the commutator relations
		\begin{align}
			\label{comm1}\beta\uz&=\uz(\beta-1),\\
			\nonumber\beta\uzd&=\uzd(\beta+1)
		\end{align}
		as well as the property
		\begin{align}
			\label{betafactor}
			\prod_{j=0}^n(\beta-j)=0.
		\end{align}
		\end{lemma}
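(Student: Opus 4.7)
The plan is to verify all three assertions by algebraic manipulation with the Witt basis relations (\ref{witt}) and the Grassmann identity $f_j f_k^\dagger + f_k^\dagger f_j = \delta_{jk}$. The symmetry relations are immediate from the definitions: expanding $\upz\uz$ and applying the derivatives indexwise gives $\sum_{j,k} f_j^\dagger f_k \, \partial_{z_j}(z_k) = \sum_j f_j^\dagger f_j = \beta$, and the mirrored identity $(\uzd\upzd) = \beta$ is obtained analogously by letting $\upzd$ differentiate $\uzd$ from the right. The remaining pair $(\uz\upz) = \upzd\uzd$ produces $\sum_j f_j f_j^\dagger$, which becomes $n - \beta$ after a single application of the Grassmann identity.

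For the commutator relations I would push one Witt factor at a time. In $\beta\uz = \sum_{j,k} f_j^\dagger f_j f_k z_k$, swapping $f_j$ past $f_k$ via $f_j f_k = -f_k f_j$ followed by swapping $f_j^\dagger$ past $f_k$ via $f_j^\dagger f_k = \delta_{jk} - f_k f_j^\dagger$ yields $\beta\uz = -\uz + \uz\beta$, i.e.\ the first relation. The computation for $\beta\uzd$ is entirely parallel, but the two swaps are now $f_j f_k^\dagger = \delta_{jk} - f_k^\dagger f_j$ and $f_j^\dagger f_k^\dagger = -f_k^\dagger f_j^\dagger$, which produces a net $+\uzd$ instead of $-\uz$ and hence $\beta\uzd = \uzd(\beta + 1)$.

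The factorization property (\ref{betafactor}) is the main structural point. The key observation is that $\beta = \sum_{j=1}^n p_j$ is a sum of $n$ pairwise commuting idempotents $p_j := f_j^\dagger f_j$. Each $p_j^2 = f_j^\dagger f_j f_j^\dagger f_j = f_j^\dagger(1 - f_j^\dagger f_j) f_j = p_j$, using $(f_j^\dagger)^2 = 0 = f_j^2$ from Remark \ref{nil}, and the pairwise commutativity $[p_j, p_k] = 0$ for $j\neq k$ follows by pushing $f_j, f_k, f_j^\dagger, f_k^\dagger$ past one another using (\ref{witt}).

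I would then prove (\ref{betafactor}) by induction on $n$. Writing $\beta_n = \beta_{n-1} + p_n$ with $[\beta_{n-1}, p_n] = 0$, decompose $\prod_{j=0}^n (\beta_n - j)$ using $1 = p_n + (1 - p_n)$. On the $(1-p_n)$-piece every occurrence of $p_n$ inside a factor is absorbed (since $(1-p_n)p_n = 0$), leaving $(1-p_n)\prod_{j=0}^n(\beta_{n-1} - j)$, which vanishes by the inductive hypothesis. On the $p_n$-piece each factor $(\beta_n - j)$ becomes $p_n(\beta_{n-1} + 1 - j)$, and shifting the product index gives $p_n (\beta_{n-1} + 1) \prod_{j=0}^{n-1}(\beta_{n-1} - j) = 0$, again by induction. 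The main bookkeeping obstacle is the verification of $[p_j, p_k] = 0$, which requires a careful four-step rearrangement of Witt elements with attention to signs; everything else is then clean.
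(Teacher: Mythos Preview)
Your argument is correct in all three parts. The paper, however, states Lemma~\ref{beta} without proof (the text moves directly from the statement to the $\mathfrak{sl}(1|2)$ relations), so there is nothing to compare your approach against; the authors evidently regard the result as routine bookkeeping with the Witt relations.

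A couple of minor remarks. Your verification of the symmetry relations and the commutator relations is exactly the natural computation and matches what the authors would have in mind. For the factorization identity (\ref{betafactor}), your induction on the number of commuting idempotents is clean and is the standard proof that a sum of $n$ commuting idempotents has spectrum contained in $\{0,1,\dots,n\}$; an equivalent but slightly quicker phrasing is to observe that since each $p_j$ satisfies $p_j(p_j-1)=0$ and the $p_j$ commute, one may simultaneously decompose $1=\prod_j\big(p_j+(1-p_j)\big)$ into $2^n$ orthogonal idempotents $e_S$, on each of which $\beta$ acts as the integer $|S|\in\{0,\dots,n\}$, whence $\prod_{j=0}^n(\beta-j)$ annihilates every $e_S$ and hence annihilates $1$. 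Your inductive version is logically the same decomposition carried out one idempotent at a time, so no change is needed.
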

		The operators $\uz$, $\uzd$, $\upz$ and $\upzd$ are invariant under the action of the unitary group $U(n)$. They satisfy the (anti-commutator) relations
		\begin{align}
			\label{sl12a}\big\{\uz,\upz\big\}&=\mE_z+\beta,\\
			\label{sl12b}\big\{\uzd,\upzd\big\}&=\mE_{\zc}+n-\beta
		\end{align}
		and generate the Lie superalgebra $\mathfrak{sl}(1|2)$ (see \cite{howe}).\\
		\indent Denoting with $\mS^{(j)}$ the space of $j$-homogenous spinors, we consider the space
		\begin{align*}
			\cP_{p,q}^{(j)}=\cP_{p,q}\otimes\mS^{(j)}
		\end{align*}
		of $\mS^{(j)}$-valued homogeneous polynomials of order $(p,q)$ (see \cite{fundaments2}, \cite{hmonogenics} for a concrete construction).
		\begin{definition}
			The space $\cM_{p,q}^{(j)}=\{M(z)|M(z)\in\cP_{p,q}^{(j)},\upz M(z)=\upzd M(z)=0\}$ of h-monogenic $\mS^{(j)}$-valued polynomials of
			homogeneity $(p,q)$ is called the space of spherical h-monogenics.
		\end{definition}
		Similar to spherical monogenics they allow a further refinement of spherical harmonics by means of a Fischer decomposition (cf. \cite{fischer}),
		hence
		\begin{align*}
			\cH_{p,q}^{(j)}=\cM_{p,q}^{(j)}\oplus\uz\cM_{p-1,q}^{(j-1)}\oplus\uzd\cM_{p,q-1}^{(j+1)}\oplus(c_1\uz\uzd+c_2\uzd\uz)\cM_{p-1,q-1}^{(j)},
		\end{align*}
		where $c_1=\frac{1}{q-1+j}$, $c_2=\frac{1}{p-1+n-j}$ and $\cH_{p,q}^{(j)}=\cH_{p,q}\otimes\mS^{(j)}$ is the space of $\mS^{(j)}$-valued 
		spherical harmonics.\\
		Like in the Euclidean setting we have the $L^2$ inner product
		\begin{align*}
		\la P,Q\ra_{\mathbb{S}^{2n-1}}=\frac{1}{\omega_{2n-1}}\int_{\mathbb{S}^{2n-1}}P(z)^\dagger Q(z)d\sigma(z)
		\end{align*}
		 as well as the Fischer inner product 
		\begin{align*}
		\la P,Q\ra_\partial=\big[P(\partial)^\dagger Q(z)\big]_{z=0}.
		\end{align*}
		As in the case of scalar valued polynomials, $P(\partial)$ is obtained by replacing the complex	variable $z_j=x_j+ix_{n+j}$ with the derivative 
		$2\dbar_{z_j}=\partial_{x_j}+i\partial_{x_{n+j}}$	in $P(z)$. Of special interest is the duality of the vector variables $\uz$ and $\uzd$ with the Dirac 
		operators $\upz$ and $\upzd$ respectively.
		\begin{lemma}
		\label{FischerSphere2}
			If $P(z)$ and $Q(z)$ are homogeneous $\mC_{2n}$-valued polynomials then it holds that
			\begin{align*}
				2\la\upz P,Q\ra_\partial&=\la P,\uz Q\ra_\partial,\\
				2\la\upzd P,Q\ra_\partial&=\la P,\uzd Q\ra_\partial,\\
				\la \uz P,Q\ra_\partial&=2\la P,\upz Q\ra_\partial,\\
				\la \uzd P,Q\ra_\partial&=2\la P,\upzd Q\ra_\partial.
			\end{align*}
		\end{lemma}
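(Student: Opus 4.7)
The plan is to reduce all four identities to two elementary building blocks, each verifiable by a direct monomial computation, and then assemble them via the Witt-basis expansions $\uz = \sum_{k=1}^n f_k z_k$ and $\upz = \sum_{k=1}^n f_k^\dagger \partial_{z_k}$ (and their conjugated counterparts). The argument is the Hermitian analogue of the one used for Lemma~\ref{Duality1} and should go through with no new ideas.

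First I would establish the scalar Fischer dualities: for any $\mC_{2n}$-valued polynomials $P,Q$ and every index $k$,
\begin{align*}
\la P, z_k Q\ra_\partial &= 2\la \partial_{z_k} P, Q\ra_\partial,
& \la z_k P, Q\ra_\partial &= 2\la P, \partial_{z_k} Q\ra_\partial,\\
\la P, \zc_k Q\ra_\partial &= 2\la \dbar_{z_k} P, Q\ra_\partial,
& \la \zc_k P, Q\ra_\partial &= 2\la P, \dbar_{z_k} Q\ra_\partial.
\end{align*}
These can be checked on monomials exactly as in Lemma~\ref{Duality1}. The key observation is that the substitution rule gives $(z_k P)(\partial) = 2\dbar_{z_k} P(\partial)$, and $\dagger$-conjugation acts on the scalar operator $\dbar_{z_k}$ by ordinary complex conjugation, turning it into $\partial_{z_k}$; hence $(z_k P)(\partial)^\dagger = P(\partial)^\dagger \cdot 2\partial_{z_k}$, which after evaluation at $z=0$ produces $2\la P,\partial_{z_k} Q\ra_\partial$. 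The remaining three dualities follow either by the analogous substitution for $\zc_k$ or by commuting a single $z_k$ (resp.\ $\zc_k$) past $P(\partial)^\dagger$ and collecting the only surviving boundary term, as in the proof of Lemma~\ref{Duality1}.

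Next I would record the trivial Clifford-constant duality: for any $c \in \mC_{2n}$,
\begin{align*}
\la c P, Q\ra_\partial = \la P, c^\dagger Q\ra_\partial,
\end{align*}
since $(cP)(\partial)^\dagger = P(\partial)^\dagger c^\dagger$. Applied to $c = f_k$ and $c = f_k^\dagger$, together with the involution $(f_k^\dagger)^\dagger = f_k$, this lets me move any Witt basis element across the bracket at the price of a dagger.

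Combining the two tools, each of the four stated identities becomes a short chain of manipulations. For instance, using that scalars commute with Clifford constants so $f_k z_k = z_k f_k$,
\begin{align*}
\la P, \uz Q\ra_\partial
= \sum_{k=1}^n \la P, z_k f_k Q\ra_\partial
= 2\sum_{k=1}^n \la \partial_{z_k} P, f_k Q\ra_\partial
= 2\sum_{k=1}^n \la f_k^\dagger \partial_{z_k} P, Q\ra_\partial
= 2\la \upz P, Q\ra_\partial,
\end{align*}
which is the first identity. The third identity $\la \uz P, Q\ra_\partial = 2\la P, \upz Q\ra_\partial$ follows by running the same chain in the opposite order (starting on the other side), and the two statements involving $\uzd,\upzd$ are obtained by exactly the same manipulation with $z_k, \partial_{z_k}, f_k$ replaced throughout by $\zc_k, \dbar_{z_k}, f_k^\dagger$. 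I do not anticipate any real obstacle; the only care required is to keep track of the Clifford conjugation and of the factor of $2$ introduced by each substitution $z_k \mapsto 2\dbar_{z_k}$ or $\zc_k \mapsto 2\partial_{z_k}$.
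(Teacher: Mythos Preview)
Your proposal is correct and follows exactly the route the paper itself indicates: it omits the proof, stating only that ``these dualities follow the same principle as in the Euclidean case in Lemma~\ref{Duality1}.'' Your decomposition into scalar Fischer dualities plus the Clifford-constant duality $\la cP,Q\ra_\partial = \la P, c^\dagger Q\ra_\partial$, assembled via the Witt-basis expansion, is precisely that principle carried out in the Hermitian setting.
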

	The proof of these dualities follows the same principle as in the Euclidean case in Lemma \ref{Duality1} and is therefore omitted.
	\subsection{Reproducing kernel}
			We apply the two Hermitian Dirac operators with respect to $\uz$ from the left and with respect to $\uu$ from the right
			to the harmonic reproducing kernel, given in Theorem \ref{harmkernel2},
			\begin{align*}
				\label{monokernel}
				K^{n}_{p+1,q+1}(z,u)=c_{p+1,q+1}\la z,u\ra^{p-q}\la z,z\ra^{q+1}\la u,u\ra^{q+1}P_{q+1}^{\nu,p-q}(2s-1)
			\end{align*}
			of homogeneity $(p+1,q+1)$ in $z$ with $p>q\geq 1$, $c_{p+1,q+1}=\frac{\nu+3+p+q}{\nu+1}$ and $\nu=n-2$. The resulting (Clifford-algebra-valued) polynomial
			\begin{align}
			\tilde{K}_{p,q}^n(z,u)=\upzd\upz K_{p+1,q+1}^n(z,u)\upud\upu
			\end{align}
			will be $(p,q)$-homogeneous because the Dirac operators $\upz$ and $\upzd$ reduce the homogeneity of $K_{p+1,q+1}^n$
			with respect to $z$ and $\zc$ respectively by $1$. Moreover we have
			\begin{align*}
				\upz\tilde{K}_{p,q}^n&=\upz\upzd\upz K^{n}_{p+1,q+1}\upud\upu\\
				&=\upz(\upz\upzd+\upzd\upz)K^{n}_{p+1,q+1}\upud\upu\\
				&=\upz(\Delta K^{n}_{p+1,q+1}\upud\upu)=0,
			\end{align*}
			where the second equality holds because of $\upz\upz=0$ (cf. Remark \ref{nil}) and the last one is due to the harmonicity of $K^{n}_{p+1,q+1}$. In a similar
			way we have that $\tilde{K}_{p,q}^n$ is a null-solution of $\upzd$ and hence h-monogenic.	Note also that the order in which the two Dirac operators 
			are applied is not important, as
			\begin{align*}
				&\upzd\upz K^{n}_{p+1,q+1}\upud\upu\\
				&=(\Delta-\upz\upzd)K^{n}_{p+1,q+1}(\Delta-\upu\upud)\\
				&=\upz\upzd K^{n}_{p+1,q+1}\upu\upud.
			\end{align*}
			Our main aim in this section is two-fold: to find an explicit expression for (\ref{monokernel}) in terms of Jacobi polynomials and to show it is the reproducing
			kernel for $\cM_{p,q}^{(j)}$.	For reasons of readability we will use the following notations in the 
			subsequent computations\
			\begin{align*}
				A&=\la z,u\ra=\{\uz,\uud\}\\
				B&=\la u,z\ra=\{\uzd,\uu\}\\
				C&=\la z,z\ra=\{\uz,\uzd\}\\
				D&=\la u,u\ra=\{\uu,\uud\},
			\end{align*}
			with $\{a,b\}=ab+ba$ the anti-commutator. To compute the action of the Dirac operators on the harmonic kernel $K_{p+1,q+1}^n$ it is necessary to know 
			its action on the Jacobi polynomials $P^{a,b}_k(x)=P^{a,b}_k(2s-1)$.
			\begin{lemma}
			\label{DiracP}The four Dirac operators act on the Jacobi polynomials $P^{a,b}_k(x)=P^{a,b}_k(2s-1)$ as
			\begin{align*}
				&\upz P^{a,b}_k(x)=2(\uud\frac{s}{A}-\uzd\frac{s}{C})\Big(P^{a,b}_k(x)\Big)'\\
				&\upzd P^{a,b}_k(x)=2(\uu\frac{s}{B}-\uz\frac{s}{C})\Big(P^{a,b}_k(x)\Big)'\\
				&\upu P^{a,b}_k(x)=2(\uzd\frac{s}{B}-\uud\frac{s}{D})\Big(P^{a,b}_k(x)\Big)'\\
				&\upud P^{a,b}_k(x)=2(\uz\frac{s}{A}-\uud\frac{s}{D})\Big(P^{a,b}_k(x)\Big)',
			\end{align*}
			where $x=2s-1$ with $s=\frac{AB}{CD}$.
			\end{lemma}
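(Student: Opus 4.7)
The plan is to reduce each of the four identities to a single scalar derivative computation by invoking the chain rule. Since $x = 2s-1$ is a scalar function and each partial derivative $\partial_{z_j}$, $\bar\partial_{z_j}$, $\partial_{u_j}$, $\bar\partial_{u_j}$ is likewise scalar (only the Witt basis elements $f_j$, $f_j^\dagger$ sitting in front of them carry Clifford content), every component of $\upz$, $\upzd$, $\upu$, $\upud$ commutes past any function of $s$. Thus for any Dirac operator $\partial \in \{\upz, \upzd, \upu, \upud\}$ one has $\partial P^{a,b}_k(x) = 2(\partial s)(P^{a,b}_k)'(x)$, and the problem is reduced to computing the action of the four operators on the rational scalar $s = AB/(CD)$.

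The core step is then the elementary calculus of $s$. I would record the basic actions first: $\partial_{z_j}A = \bar u_j$, $\partial_{z_j}C = \bar z_j$, and $\partial_{z_j}B = \partial_{z_j}D = 0$, which assemble into $\upz A = \sum_j f_j^\dagger \bar u_j = \uud$ and $\upz C = \uzd$, while $\upz B = \upz D = 0$. Applying the quotient rule and using that $A, B, C, D$ are scalar so commute past the Witt generators,
\[
\upz s \;=\; \frac{(\upz A)\,B}{CD} - \frac{AB\,(\upz C)}{C^2 D} \;=\; \frac{B}{CD}\,\uud - \frac{AB}{C^2 D}\,\uzd \;=\; \uud\,\frac{s}{A} - \uzd\,\frac{s}{C},
\]
where in the last step the coefficients are rewritten in compact form using $s/A = B/(CD)$ and $s/C = AB/(C^2 D)$. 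This gives the first formula after multiplying by $2(P^{a,b}_k)'(x)$.

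The remaining three identities follow from the same template: for $\upzd$ only $B$ and $C$ are hit (producing $\uu$ and $\uz$); for $\upu$ only $B$ and $D$ (producing $\uzd$ and $\uud$); and for $\upud$ only $A$ and $D$ (producing $\uz$ and $\uu$). In each case the pair of rational coefficients collapses to expressions of the form $s/A$, $s/B$, $s/C$ or $s/D$. One can in fact obtain three of the four identities from the first by the discrete symmetries $z \leftrightarrow u$ and complex conjugation, which exchange $(A,B,C,D) \to (B,A,D,C)$ etc., but writing them out directly is just as quick. There is no real analytic obstacle here; the only care required is bookkeeping which scalar blocks $A, B, C, D$ each operator annihilates and which Witt elements it attaches, so that the correct Hermitian vectors $\uz$, $\uzd$, $\uu$, $\uud$ appear in the output.
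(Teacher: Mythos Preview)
Your proposal is correct and follows essentially the same route as the paper: compute the Dirac actions on the scalar building blocks $A,B,C,D$, apply the quotient rule to $s=AB/(CD)$, and finish with the chain rule (the factor $2$ coming from $dx/ds=2$). Your identification of $\uu$ in the $\upud$ case is in fact what the paper's own proof obtains; the $\uud$ appearing in the displayed statement is a typo.
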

			\begin{proof}
			Letting $\upz$ act on the Hermitian inner product $A=\la z,u\ra$ one gets
			\begin{align*}
				\upz A=\upz\la z,u\ra=\sum_{j=1}^nf_j^\dagger\partial_{z_j}\sum_{l=1}^nz_l\uc_l
				=\sum_{j=1}^nf_j^\dagger\partial_{z_j}z_j\uc_j=\sum_{j=1}^nf_j^\dagger\uc_j=\uud.
			\end{align*}
			Equivalently one has
			\begin{align*}
				\upud A&=\uz&&\\
				\upzd B&=\uu&&\upu B=\uzd\\
				\upz C&=\uzd&&\upzd C=\uz\\
				\upu D&=\uud&&\upud D=\uu.
			\end{align*}
			Applying the quotient rule to the angular variable $s=\frac{\la z,u\ra\la u,z\ra}{\la z,z\ra\la u,u\ra}=\frac{AB}{CD}$ one gets
			\begin{align*}
				\upz s=\upz\frac{AB}{CD}=\frac{\uud BCD-\uzd ABD}{C^2D^2}=\uud\frac{s}{A}-\uzd\frac{s}{C}
			\end{align*}
			and furthermore
			\begin{align*}
				\upzd s&=\uu\frac{s}{B}-\uz\frac{s}{C}\\
				\upu s&=\uzd\frac{s}{B}-\uud\frac{s}{D}\\
				\upud s&=\uz\frac{s}{A}-\uu\frac{s}{D}.
			\end{align*}
			The claim follows by the chain rule.
			\end{proof}
			\mbox{}\newline
			To compute $\tilde{K}_{p,q}^n$ we will now apply the four necessary Dirac operators consecutively to the harmonic kernel $K_{p+1,q+1}^n$. For the sake of
			readability these results are collected in the following three lemmas, after which the final computation is obtained in Theorem \ref{Dirac4}. Note that we omit the
			arguments of the Jacobi polynomials.
			\begin{lemma}
			\label{Dirac1}
				The Dirac operator $\upz$ acts on the harmonic kernel as
				\begin{align*}
				&\upz K_{p+1,q+1}^n=c_{p+1,q+1}(p+1)\bigg(\uud A^{p-q-1}C^{q+1}D^{q+1}P_{q+1}^{n-1,p-q-1}-\uzd A^{p-q}C^{q}D^{q+1}P_{q}^{n-1,p-q}\bigg).\\
			\end{align*}
			\end{lemma}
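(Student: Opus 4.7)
The plan is to expand $\upz K_{p+1,q+1}^n = c_{p+1,q+1}\,\upz\bigl[A^{p-q}C^{q+1}D^{q+1}P_{q+1}^{n-2,p-q}(2s-1)\bigr]$ via the Leibniz rule, collect the resulting terms by the two Clifford vectors $\uud$ and $\uzd$, and then simplify using the identities of Lemma \ref{JacRec2}.

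The first step uses Lemma \ref{DiracP} together with the scalar derivatives $\upz A^{p-q}=(p-q)\uud A^{p-q-1}$, $\upz C^{q+1}=(q+1)\uzd C^{q}$, $\upz D = 0$ (since $D$ is independent of $z$), and $\upz P_{q+1}^{n-2,p-q}(2s-1)=2\bigl(\uud\frac{s}{A}-\uzd\frac{s}{C}\bigr)\bigl(P_{q+1}^{n-2,p-q}\bigr)'$. All scalar factors commute past the Clifford vectors, so the Leibniz expansion splits cleanly into a $\uud$-part and a $\uzd$-part. The factors $s/A$ and $s/C$ arising from $\upz s$ absorb exactly one power of $A$ respectively $C$ in the surrounding monomial, so that no unwanted factor of $B$ survives and the scalar variable $s$ simply multiplies $\bigl(P_{q+1}^{n-2,p-q}\bigr)'$.

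Collecting by the Clifford vectors gives
\begin{align*}
\frac{\upz K_{p+1,q+1}^n}{c_{p+1,q+1}} &= \uud A^{p-q-1}C^{q+1}D^{q+1}\bigl[(p-q)P_{q+1}^{n-2,p-q}+2s\bigl(P_{q+1}^{n-2,p-q}\bigr)'\bigr]\\
&\quad+\uzd A^{p-q}C^{q}D^{q+1}\bigl[(q+1)P_{q+1}^{n-2,p-q}-2s\bigl(P_{q+1}^{n-2,p-q}\bigr)'\bigr].
\end{align*}
The final step applies identities (\ref{jacobi6}) and (\ref{jacobi7}) of Lemma \ref{JacRec2}, which are exactly tailored to rewrite the two bracketed expressions as $(p+1)P_{q+1}^{n-1,p-q-1}$ and $-(p+1)P_{q}^{n-1,p-q}$ respectively, yielding the claimed formula.

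I do not anticipate any genuine obstacle here: Lemma \ref{DiracP} carries the analytic burden while Lemma \ref{JacRec2} carries the combinatorial burden, and the two mesh perfectly. The only care required is tracking Clifford pre-factors versus scalar coefficients, and noticing that $B$ enters only through the combination $s=AB/(CD)$ and therefore never appears explicitly once the Jacobi recurrences have been invoked.
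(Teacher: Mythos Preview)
Your proposal is correct and follows essentially the same approach as the paper: apply the Leibniz rule together with Lemma~\ref{DiracP}, collect by the Clifford vectors $\uud$ and $\uzd$ to obtain the identical intermediate expression, and then invoke identities~(\ref{jacobi6}) and~(\ref{jacobi7}) of Lemma~\ref{JacRec2}. Your additional remarks on why $B$ never appears explicitly and on the commutation of scalar factors past Clifford vectors are accurate and merely make explicit what the paper leaves implicit.
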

			\begin{proof}
			Using the product rule and the results of Lemma \ref{DiracP} we get
			\begin{align*}
				\upz K_{p+1,q+1}^n&=c_{p+1,q+1}\upz\Big(A^{p-q}C^{q+1}D^{q+1}P_{q+1}^{n-2,p-q}\Big)\\
				&=c_{p+1,q+1}\Big(\uud A^{p-q-1}C^{q+1}D^{q+1}(p-q)P_{q+1}^{n-2,p-q}\\
				&+\uzd A^{p-q}C^{q}D^{q+1}(q+1)P_{q+1}^{n-2,p-q}\\
				&+(\uud\frac{s}{A}-\uzd\frac{s}{C})A^{p-q}C^{q+1}D^{q+1}2\Big(P_{q+1}^{n-2,p-q}\Big)'\Big).
			\end{align*}
			The resulting terms can be collected with regard to the vectors $\uud$ and $\uzd$, yielding
			\begin{align*}
				\upz K_{p+1,q+1}^n&=c_{p+1,q+1}\bigg(\uud A^{p-q-1}C^{q+1}D^{q+1}\bigg((p-q)P_{q+1}^{n-2,p-q}+2s\Big(P_{q+1}^{n-2,p-q}\Big)'\bigg)\\
				&+\uzd A^{p-q}C^{q}D^{q+1}\bigg((q+1)P_{q+1}^{n-2,p-q}-2s\Big(P_{q+1}^{n-2,p-q}\Big)'\bigg)\bigg)
			\end{align*}
			and the statement then follows by applying the recurrence formulas (\ref{jacobi6}) and (\ref{jacobi7}) of Lemma \ref{JacRec2} respectively.
			\end{proof}
			\indent Applying the vector-valued Dirac operator $\upz$ to the scalar polynomial $K_{p+1,q+1}^n$ results naturally in a vector-valued polynomial. 
			By applying a second Dirac opartor $\upzd$ in the next step we expect the result to be a sum of scalars and bivectors, as confirmed in the following lemma.
			\begin{lemma}
			\label{Dirac2}
				The two Dirac operators $\upzd\upz$ act on the harmonic kernel as
				\begin{align*}
					\upzd\upz K_{p+1,q+1}^n=c_{p+1,q+1}(p+1)&\bigg(\uz\uzd A^{p-q}C^{q-1}D^{q+1}pP_{q-1}^{n,p-q}+\uu\uud A^{p-q}C^{q}D^{q}(n+p)P_{q}^{n,p-q}\\
					&-\uz\uud A^{p-q-1}C^{q}D^{q+1}pP_{q}^{n,p-q-1}-\uu\uzd A^{p-q+1}C^{q-1}D^{q}(n+p)P_{q-1}^{n,p-q+1}\\
					&-(n-\beta)A^{p-q}C^{q}D^{q+1}P_{q}^{n-1,p-q}\bigg).
				\end{align*}
			\end{lemma}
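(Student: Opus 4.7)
My plan is to apply $\upzd$ directly to the expression from Lemma \ref{Dirac1}, which we write as $\upz K_{p+1,q+1}^n = c_{p+1,q+1}(p+1)(\uud G_1 - \uzd G_2)$ with scalar functions $G_1 = A^{p-q-1}C^{q+1}D^{q+1}P_{q+1}^{n-1,p-q-1}$ and $G_2 = A^{p-q}C^qD^{q+1}P_q^{n-1,p-q}$. Because $\upzd$ is a Clifford-vector-valued operator acting on a vector-valued expression, the key step is to commute it past the leftmost vector in each term via the two operator identities
\begin{align*}
\{\upzd,\uzd\} = \mE_{\zc} + n - \beta, \qquad \{\upzd,\uud\} = \sum_{j=1}^n \uc_j\dbar_{z_j} =: \mE_{\uc}^z.
\end{align*}
The first is (\ref{sl12b}); the second follows from a short calculation with the Witt relations (\ref{witt}). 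Accordingly I split $\upzd(\uud G_1) = \mE_{\uc}^z G_1 - \uud\upzd G_1$ and $\upzd(\uzd G_2) = (\mE_{\zc}+n-\beta)G_2 - \uzd\upzd G_2$.

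Each of the individual ingredients is then computed via the product rule and Lemma \ref{DiracP}. The derivation $\mE_{\uc}^z$ satisfies $\mE_{\uc}^z C = A$, $\mE_{\uc}^z B = D$, $\mE_{\uc}^z s = A(1-s)/C$, and annihilates $A,D$; meanwhile $\mE_{\zc}G_2 = qG_2$ is immediate from the $\zc$-homogeneity count (using $\mE_{\zc}s = 0$). Expanding $\upzd G_1$ and $\upzd G_2$ produces expressions linear in $\uz$ and $\uu$, which upon multiplication on the left by $\uud$ or $\uzd$ give products $\uud\uz, \uud\uu, \uzd\uz, \uzd\uu$. These I flip into the target products $\uz\uud,\uu\uud,\uz\uzd,\uu\uzd$ by means of the anticommutators $\{\uz,\uud\}=A$, $\{\uu,\uud\}=D$, $\{\uz,\uzd\}=C$, $\{\uu,\uzd\}=B$. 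The decisive bookkeeping is that all the purely scalar by-products generated by these flips cancel exactly against the $\mE_{\uc}^z G_1$ and $qG_2$ contributions, leaving the single scalar $-(n-\beta)G_2$ claimed in the target.

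It then remains to identify the four surviving Clifford-bilinear coefficients as single Jacobi polynomials. Two coefficients take the form $aP + b\,sP'$ with $P=P_{q+1}^{n-1,p-q-1}$ and $P=P_q^{n-1,p-q}$ respectively, and collapse under the special identity (\ref{jacobi7}) — after the shift $q\mapsto q-1$, $n\mapsto n+1$, $p\mapsto p-1$ — into $-pP_q^{n,p-q-1}$ for $\uz\uud$ and $pP_{q-1}^{n,p-q}$ for $\uz\uzd$. The other two coefficients are proportional to $P'$ and reduce via the derivative formula (\ref{jacobi5}) to $(n+p)P_q^{n,p-q}$ for $\uu\uud$ and $-(n+p)P_{q-1}^{n,p-q+1}$ for $\uu\uzd$. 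The main obstacle in this plan is the scalar cancellation in the previous paragraph, which relies on the precise interplay between $\mE_{\uc}^z s$, $\upzd s$ and the appearance of $s=AB/(CD)$ in every derivative term.
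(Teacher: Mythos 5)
Your proposal is correct and lands on exactly the same intermediate collected expression as the paper, followed by the same final reductions via (\ref{jacobi7}) and (\ref{jacobi5}); the difference is only in how the Clifford non-commutativity is bookkept. The paper applies $\upzd$ directly to $\uud G_1-\uzd G_2$ from Lemma \ref{Dirac1}: since the vectors produced by differentiation (Lemma \ref{DiracP}) automatically land to the \emph{left} of the constant factors $\uud$ and $\uzd$, the products $\uz\uud$, $\uu\uud$, $\uz\uzd$, $\uu\uzd$ appear in the desired order at once, and the only non-Leibniz contribution is the single term $\upzd\uzd=n-\beta$ (Lemma \ref{beta}) multiplying $G_2$. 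You instead commute $\upzd$ past the leading vector via the operator anticommutators $\{\upzd,\uzd\}=\mE_{\zc}+n-\beta$ and $\{\upzd,\uud\}=\sum_j\uc_j\dbar_{z_j}$ and then flip $\uud\uz$, $\uud\uu$, $\uzd\uz$, $\uzd\uu$ back with $\{\uz,\uud\}=A$, $\{\uu,\uud\}=D$, $\{\uz,\uzd\}=C$, $\{\uu,\uzd\}=B$; this is valid, and the cancellation you flag does check out: the scalar debris from the flips in the first piece is $-A^{p-q}C^qD^{q+1}\big((q+1)P_{q+1}^{n-1,p-q-1}+2(1-s)(P_{q+1}^{n-1,p-q-1})'\big)$, which is exactly $-\mE_{\uc}^zG_1$, while the flips in the second piece produce $+qG_2$, cancelling $\mE_{\zc}G_2=qG_2$, so only $-(n-\beta)G_2$ survives. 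What your route buys is a transparent origin for the $(n-\beta)$ term, coming straight from the $\mathfrak{sl}(1|2)$ relation (\ref{sl12b}); what it costs is the extra reordering-and-cancellation step that the paper's direct Leibniz expansion avoids. One small indexing slip: for the $\uz\uud$ coefficient the required specialization of (\ref{jacobi7}) is $n\mapsto n+1$, $p\mapsto p-1$ with $q$ unchanged, whereas the additional shift $q\mapsto q-1$ pertains only to the $\uz\uzd$ coefficient; the collapsed polynomials you state, $-pP_q^{n,p-q-1}$ and $pP_{q-1}^{n,p-q}$, are nevertheless the correct ones.
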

			\begin{proof}
				We act with the conjugated Dirac operator $\upzd$ from the left on the result of Lemma \ref{Dirac1}, which leads to
				\begin{align*}
					\upzd\bigg(\upz K_{p+1,q+1}^n\bigg)&=c_{p+1,q+1}(p+1)\upzd\bigg(\uud A^{p-q-1}C^{q+1}D^{q+1}P_{q+1}^{n-1,p-q-1}-\uzd A^{p-q}C^{q}D^{q+1}P_{q}^{n-1,p-q}\bigg)\\
					&=c_{p+1,q+1}(p+1)\bigg(\uz\uud A^{p-q-1}C^{q}D^{q+1}(q+1)P_{q+1}^{n-1,p-q-1}\\
					&+(\uu\frac{s}{B}-\uz\frac{s}{C})\uud A^{p-q-1}C^{q+1}D^{q+1}2\Big(P_{q+1}^{n-1,p-q-1}\Big)'-(n-\beta)A^{p-q}C^{q}D^{q+1}P_{q}^{n-1,p-q}\\
					&-\uz\uzd A^{p-q}C^{q-1}D^{q+1}qP_{q}^{n-1,p-q}-(\uu\frac{s}{B}-\uz\frac{s}{C})\uzd A^{p-q}C^{q}D^{q+1}2\Big(P_{q}^{n-1,p-q}\Big)'\bigg),
				\end{align*}
				where again Lemma \ref{JacRec2} was used. The resulting terms are then collected with respect to $\uz\uzd$, $\uu\uud$, $\uz\uud$, $\uu\uzd$ and $(n-\beta)$
				\begin{align*}
					\upzd\upz K_{p+1,q+1}^n&=c_{p+1,q+1}(p+1)\bigg(\uz\uud A^{p-q-1}C^{q}D^{q+1}\Big((q+1)P_{q+1}^{n-1,p-q-1}-2s\big(P_{q+1}^{n-1,p-q-1}\big)'\Big)\\
					&-\uz\uzd A^{p-q}C^{q-1}D^{q+1}\Big(qP_{q}^{n-1,p-q}-2s\big(P_{q}^{n-1,p-q}\big)'\Big)+\uu\uud A^{p-q}C^{q}D^{q}2\big(P_{q+1}^{n-1,p-q-1}\big)'\\
					&-\uu\uzd A^{p-q+1}C^{q-1}D^{q}2\big(P_{q}^{n-1,p-q}\big)'-(n-\beta)A^{p-q}C^{q}D^{q+1}P_{q}^{n-1,p-q}\bigg).
				\end{align*}
				The statement follows by applying formula (\ref{jacobi7}) of Lemma \ref{JacRec2} to the first two terms and the derivative relation (\ref{jacobi5}) of 
				Lemma \ref{JacRec1} to the third and fourth term.
				\end{proof}
			\mbox{}\\
			In the third step the conjugated Dirac operator $\upud$ acts from the right on the result of the previous lemma.
			\begin{lemma}
			\label{Dirac3}
				Letting the operator $\upzd\upz$ act from the left and the Dirac operator $\upud$ from the right on the harmonic kernel one gets
				\begin{align*}
					\bigg(\upzd\upz K_{p+1,q+1}^n\bigg)\upud&=c_{p+1,q+1}(p+1)(n+p+q+1)\bigg(A^{p-q}C^{q}D^{q}(\beta+p)\uu P_{q}^{n-1,p-q}\\
					&-A^{p-q-1}C^{q}D^{q}\uz\uud\uu pP_{q}^{n,p-q-1}+A^{p-q}C^{q-1}D^{q}\uz\uzd\uu pP_{q-1}^{n,p-q}\bigg).
				\end{align*}
			\end{lemma}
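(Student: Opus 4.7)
The strategy is to apply the right Dirac operator $\upud$ term-by-term to the five-term expression produced by Lemma \ref{Dirac2}, and then regroup using Jacobi polynomial recurrences together with the (anti)commutation relations of the Hermitian Witt basis. The computation is structurally analogous to Lemmas \ref{Dirac1} and \ref{Dirac2}, but more bookkeeping-intensive because we now have five source terms and because the scalar $(n-\beta)$ must eventually be moved past a newly created $\uu$. As a preparatory step I would record the elementary right actions $A\upud=\uz$, $B\upud=0$, $C\upud=0$, $D\upud=\uu$, and $s\upud=\uz\,s/A-\uu\,s/D$, all of which follow from the definitions by the same chain-rule argument used for $\upz$ in Lemma \ref{DiracP}; consequently $(P_k^{a,b}(2s-1))\upud = 2(\uz\,s/A-\uu\,s/D)(P_k^{a,b})'$.

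Next I would apply the product rule to each of the five summands of $\upzd\upz K^n_{p+1,q+1}$. Because $B\upud=C\upud=0$, each summand produces only three sub-terms (differentiating the power of $A$, the power of $D$, or the Jacobi polynomial), giving fifteen raw contributions distinguished by Clifford-algebraic prefactors such as $\uz\uzd\uu$, $\uz\uzd\uz$, $\uu\uud\uu$, $\uu\uud\uz$, $\uz\uud\uu$, $\uu\uzd\uu$, $(n-\beta)\uu$, or $(n-\beta)\uz$, each multiplied by a monomial in $A,C,D$ and a Jacobi polynomial (possibly differentiated).

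In the third step I would collapse Clifford prefactors using the nilpotency $\uu^2=\uud^2=0$ and the anticommutator $\uu\uud+\uud\uu=D$: for example $\uu\uud\uu=D\,\uu$, which cancels a factor of $D$ and produces a term proportional to $\uu$ alone, and similarly $\uz\uzd\uz=C\,\uz$. An auxiliary identity $\beta\uu=\uu(\beta-1)$, obtained by a direct Grassmann-identity computation from $[\beta,\uu]=-\uu$, is then used to move the $(n-\beta)$ factor past the newly created $\uu$, bringing every contribution of type ``vector $\uu$ alone'' into a common form $\uu\cdot(\text{scalar involving }\beta)\cdot P_q^{n-1,p-q}$. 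Several of the remaining sub-terms will share identical Clifford prefactors but carry Jacobi polynomials with indices differing by one, and these I would merge using the recurrences (\ref{jacobi6}), (\ref{jacobi7}), (\ref{jacobi1})--(\ref{jacobi3}) of Lemmas \ref{JacRec1} and \ref{JacRec2}, together with the derivative formula (\ref{jacobi5}). The shared multiplier $(p+1)(n+p+q+1)$ emerges precisely from the coefficient produced by (\ref{jacobi3}) at the point where the $\uu$-terms are consolidated.

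The main obstacle is this last consolidation: keeping track of signs, of the index shifts in the Jacobi parameters, and of the non-commutativity of $\beta$ with $\uu$, so that every sub-term outside the three listed in the statement cancels. My bookkeeping plan is to sort the fifteen raw contributions into four buckets labelled by Clifford prefactor, namely $\uu$ (alone), $\uz\uud\uu$, $\uz\uzd\uu$, and everything else, and then to show that the ``everything else'' bucket vanishes identically after (\ref{jacobi1}) and (\ref{jacobi5}) are applied, while the remaining three buckets collapse, via the recurrences above and the identity $\beta\uu=\uu(\beta-1)$, to exactly the three terms on the right-hand side of the statement, with the combined numerical prefactor $c_{p+1,q+1}(p+1)(n+p+q+1)$.
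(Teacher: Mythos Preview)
Your overall strategy is the same as the paper's: apply $\upud$ from the right to each summand of Lemma~\ref{Dirac2}, simplify the Clifford prefactors using the anticommutators, and then regroup by prefactor and collapse the Jacobi polynomials via Lemmas~\ref{JacRec1} and~\ref{JacRec2}. The paper indeed organises the fifteen-or-so raw contributions into four families --- a $\uz$-family (which vanishes), a $\uu$-family (which produces the $(\beta+p)$ term), and the $\uz\uud\uu$- and $\uz\uzd\uu$-families --- exactly as you propose.

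There is, however, a concrete gap in your accounting. You claim that ``each summand produces only three sub-terms (differentiating the power of $A$, the power of $D$, or the Jacobi polynomial)''. This is false for the second and third summands of Lemma~\ref{Dirac2}, whose Clifford prefactors contain $\uud=\sum_j f_j^\dagger\bar u_j$. Since $\upud$ differentiates with respect to $\bar u$, the prefactor $\uud$ is itself hit, giving $(\uud)\upud=\sum_j f_j^\dagger f_j=\beta$. Thus the $\uu\uud$-term produces an extra contribution $\uu\beta\,A^{p-q}C^qD^q(n+p)P_q^{n,p-q}$, and the $\uz\uud$-term produces an extra $-\uz\beta\,A^{p-q-1}C^qD^{q+1}pP_q^{n,p-q-1}$.

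These extra $\beta$-terms are not cosmetic. In the $\uz$-family, the $-\beta P_q^{n,p-q-1}$ from the third summand cancels against the $+\beta$ arising when you commute $(n-\beta)$ past $\uz$ in the fifth summand; without it the $\uz$-family does not vanish. In the $\uu$-family, the $(\beta+p)$ coefficient in the final answer comes from \emph{two} sources of $\beta$: the $(n-\beta)$ from the fifth summand (which you account for) \emph{and} the $\uu\beta$ from differentiating $\uud$ in the second summand (which you omit). With only the first source you would obtain the wrong scalar in front of $\uu$. The paper handles this by writing the $\uu$-family as $G_1(s)+\beta G_2(s)$ and showing separately that $G_1=(p-1)(n+p+q+1)P_q^{n-1,p-q}$ and $G_2=(n+p+q+1)P_q^{n-1,p-q}$, after which the commutator $\uu(\beta+p-1)=(\beta+p)\uu$ is applied; this step is where the factor $(n+p+q+1)$ you mention actually emerges. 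So your plan is sound, but you must add $(\uud)\upud=\beta$ to your list of elementary right actions and track the two resulting contributions through the $\uz$- and $\uu$-buckets.
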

			\begin{proof}
			As in the previous lemmata we compute the action of the Dirac operator on the terms of the previous lemma's result, collect the outcome in terms
			of scalars, vectors, bivectors and 3-vectors and apply the recurrence formulas for the Jacobi polynomials to get the final result.\\
			\indent We apply the Dirac operator $\upud$ from the right on the first term of Lemma \ref{Dirac2}, leading to
			\begin{align*}
				\big(\uz\uzd A^{p-q}C^{q-1}D^{q+1}pP_{q-1}^{n,p-q}\big)\upud&=\uz\uzd\uz A^{p-q-1}C^{q-1}D^{q+1}p(p-q)P_{q-1}^{n,p-q}\\
				&+\uz\uzd\uu A^{p-q}C^{q-1}D^{q}p(q+1)P_{q-1}^{n,p-q}\\
				&+\uz\uzd A^{p-q}C^{q-1}D^{q+1}2p(\uz\frac{s}{A}-\uu\frac{s}{D})\big(P_{q-1}^{n,p-q}\big)'.
			\end{align*}
			Because of the anti-commutator relation $\uz\uzd\uz=\uz(C-\uz\uzd)=\uz C$ this yields
			\begin{align*}
				\big(\uz\uzd A^{p-q}C^{q-1}D^{q+1}pP_{q-1}^{n,p-q}\big)\upud&=\uz A^{p-q-1}C^{q}D^{q+1}p\Big((p-q)P_{q-1}^{n,p-q}+2s\big(P_{q-1}^{n,p-q}\big)'\Big)\\
				&+\uz\uzd\uu A^{p-q}C^{q-1}D^{q}p\Big((q+1)P_{q-1}^{n,p-q}-2s\big(P_{q-1}^{n,p-q}\big)'\Big).
			\end{align*}
			Applying the recurrence formula (\ref{jacobi5}) of Lemma \ref{JacRec2} to the first term and formula (\ref{jacobi6}) to the second results in
			\begin{align}
				\label{term1}\big(\uz\uzd A^{p-q}C^{q}D^{q+1}pP_{q-1}^{n,p-q}\big)\upud&=\uz A^{p-q-1}C^{q-1}D^{q+1}p(p-1)P_{q-1}^{n+1,p-q-1}\\
				&+\uz\uzd\uu A^{p-q}C^{q-1}D^{q}p\big(2P_{q-1}^{n,p-q}-(p-1)P_{q-2}^{n+1,p-q}\big).\nonumber
			\end{align}
			Acting with $\upud$ on the second term of Lemma \ref{Dirac2} gives
			\begin{align*}
				\big(\uu\uud A^{p-q}C^qD^q(n+p)P_q^{n,p-q}\big)\upud&=\uu\beta A^{p-q}C^qD^q(n+p)P_q^{n,p-q}\\
				&+\uu\uud\uz A^{p-q-1}C^qD^q(p-q)(n+p)P_q^{n,p-q}\\
				&+\uu\uud\uu A^{p-q}C^qD^{q-1}q(n+p)P_q^{n,p-q}\\
				&+\uu\uud A^{p-q}C^qD^q2(n+p)(\uz\frac{s}{A}-\uu\frac{s}{D})\big(P_q^{n,p-q}\big)'.
			\end{align*}
			Using once more $\uu\uud\uu=\uu(D-\uu\uud)=\uu D$ we have
			\begin{align*}
				\big(\uu\uud A^{p-q}C^qD^q(n+p)P_q^{n,p-q}\big)\upud&=\uu A^{p-q}C^qD^q(n+p)\Big((q+\beta)P_q^{n,p-q}-2s\big(P_q^{n,p-q}\big)'\\
				&+\uu\uud\uz A^{p-q-1}C^qD^q(n+p)\Big((p-q)P_q^{n,p-q}+2s\big(P_q^{n,p-q}\big)'\Big).
			\end{align*}
			As before we apply formula (\ref{jacobi5}) of Lemma \ref{JacRec2} to the second term and formula (\ref{jacobi6}) to the first, hence
			\begin{align*}
				\big(\uu\uud A^{p-q}C^qD^q(n+p)P_q^{n,p-q}\big)\upud&=\uu A^{p-q}C^qD^q(n+p)\big(\beta P_q^{n,p-q}-pP_{q-1}^{n+1,p-q}\big)\\
				&+(\uu A+\uz D-\uz\uud\uu)A^{p-q-1}C^qD^q(n+p)pP_q^{n+1,p-q-1}\\
				&=\uu A^{p-q}C^qD^q(n+p)\big(\beta P_q^{n,p-q}-p(P_{q-1}^{n+1,p-q}-P_q^{n+1,p-q-1})\big)\\
				&+\uz A^{p-q-1}C^qD^{q+1}(n+p)pP_q^{n+1,p-q-1}\\
				&-\uz\uud\uu A^{p-q-1}C^qD^q(n+p)pP_q^{n+1,p-q-1},
			\end{align*}
			where we also used $\uu\uud\uz=\uu(A-\uz\uud)=\uu A+\uz\uu\uud=\uu A+\uz(D-\uud\uu)$. Applying formula (\ref{jacobi1}) of Lemma \ref{JacRec1} to the first term results in
			\begin{align}
				\label{term2}\big(\uu\uud A^{p-q}C^qD^q(n+p)P_q^{n,p-q}\big)\upud&=\uu A^{p-q}C^qD^q(n+p)(\beta+p)P_q^{n,p-q}\\
				&+\uz A^{p-q-1}C^qD^{q+1}(n+p)pP_q^{n+1,p-q-1}\nonumber\\
				&-\uz\uud\uu A^{p-q-1}C^qD^q(n+p)pP_q^{n+1,p-q-1}.\nonumber
			\end{align}
			Letting $\upud$ act on the third term of Lemma \ref{Dirac2} gives
			\begin{align*}
				\big(-\uz\uud A^{p-q-1}C^qD^{q+1}pP_q^{n,p-q-1}\big)\upud&=-\uz\beta A^{p-q-1}C^qD^{q+1}pP_q^{n,p-q-1}\\
				&-\uz\uud\uz A^{p-q-2}C^qD^{q+1}p(p-q-1)P_q^{n,p-q-1}\\
				&-\uz\uud\uu A^{p-q-1}C^qD^{q}p(q+1)P_q^{n,p-q-1}\\
				&-\uz\uud A^{p-q-1}C^qD^{q+1}2p(\uz\frac{s}{A}-\uu\frac{s}{D})\big(P_q^{n,p-q-1}\big)'.
			\end{align*}
			Using $\uz\uud\uz=\uz(A-\uz\uud)=\uz A$ and collecting in terms of $\uz$ and $\uz\uud\uu$ we have
			\begin{align*}
				\big(-\uz\uud A^{p-q-1}C^qD^{q+1}pP_q^{n,p-q-1}\big)\upud&=-\uz A^{p-q-1}C^qD^{q+1}p\Big((\beta+p-q-1)P_q^{n,p-q-1}+2s\big(P_q^{n,p-q-1})'\Big)\\
				&-\uz\uud\uu A^{p-q-1}C^qD^{q}p\Big((q+1)P_q^{n,p-q-1}-2s\big(P_q^{n,p-q-1}\big)'\Big),
			\end{align*}
			where we again can apply formulas (\ref{jacobi5}) and (\ref{jacobi6}) of Lemma \ref{JacRec2}, yielding
			\begin{align}
				\label{term3}\big(-\uz\uud A^{p-q-1}C^qD^{q+1}pP_q^{n,p-q-1}\big)\upud&=-\uz A^{p-q-1}C^qD^{q+1}p\big(\beta P_q^{n,p-q-1}+(p-1)P_q^{n+1,p-q-2}\big)\\
				&-\uz\uud\uu A^{p-q-1}C^qD^{q}p\big(P_q^{n,p-q-1}-(p-1)P_{q-1}^{n+1,p-q-1}\big).\nonumber
				\end{align}
			For the fourth term of Lemma \ref{Dirac2} we get
			\begin{align*}
				\big(-\uu\uzd A^{p-q+1}C^{q-1}D^q(n+p)P_{q-1}^{n,p-q+1}\big)\upud&=-\uu\uzd\uz A^{p-q}C^{q-1}D^q(n+p)(p-q+1)P_{q-1}^{n,p-q+1}\\
				&-\uu\uzd\uu A^{p-q+1}C^{q-1}D^{q-1}(n+p)qP_{q-1}^{n,p-q+1}\\
				&-\uu\uzd A^{p-q+1}C^{q-1}D^q2(n+p)(\uz\frac{s}{A}-\uu\frac{s}{D})\big(P_{q-1}^{n,p-q+1}\big)'.
			\end{align*}
			With $\uu\uzd\uu=\uu B$ we have
			\begin{align*}
				\big(-\uu\uzd A^{p-q+1}C^{q-1}D^q(n+p)P_{q-1}^{n,p-q+1}\big)\upud&=-\uu\uzd\uz A^{p-q}C^{q-1}D^q(n+p)\Big((p-q+1)P_{q-1}^{n,p-q+1}
				+2s\big(P_{q-1}^{n,p-q+1}\big)'\Big)\\
				&-\uu A^{p-q}C^qD^q(n+p)s\Big(qP_{q-1}^{n,p-q+1}-2s\big(P_{q-1}^{n,p-q+1}\big)'\Big),
			\end{align*}
			where we apply formulas (\ref{jacobi5}) and (\ref{jacobi6}) of Lemma \ref{JacRec2} and $\uu\uzd\uz=\uu C+\uz B-\uz\uzd\uu$ to get
			\begin{align}
				\label{term4}\big(-\uu\uzd A^{p-q+1}C^{q-1}D^q(n+p)P_{q-1}^{n,p-q+1}\big)\upud&=\uz\uzd\uu A^{p-q}C^{q-1}D^q(n+p)pP_{q-1}^{n+1,p-q}\\
				&-\uu A^{p-q}C^qD^q(n+p)\big(sP_{q-1}^{n,p-q+1}+p(P_{q-1}^{n+1,p-q}-sP_{q-2}^{n+1,p-q+1})\big)\nonumber\\
				&-\uz A^{p-q-1}C^qD^{q+1}(n+p)psP_{q-1}^{n+1,p-q}.\nonumber
			\end{align}
			If we let $\upud$ act on the last term of Lemma \ref{Dirac2} we get
			\begin{align*}
				\big((n-\beta)A^{p-q}C^qD^{q+1}P_q^{n-1,p-q}\big)\upud&=-(n-\beta)\uz A^{p-q-1}C^qD^{q+1}(p-q)P_q^{n-1,p-q}\\
				&-(n-\beta)\uu A^{p-q}C^qD^{q}(q+1)P_q^{n-1,p-q}\\
				&-(n-\beta)A^{p-q}C^qD^{q+1}2(\uz\frac{s}{A}-\uu\frac{s}{D})\big(P_q^{n-1,p-q}\big)'.
			\end{align*}
			Collecting the result in terms of $\uu$ and $\uz$ gives
			\begin{align*}
				\big((n-\beta)A^{p-q}C^qD^{q+1}P_q^{n-1,p-q}\big)\upud&=-(n-\beta)\uz A^{p-q-1}C^qD^{q+1}\Big((p-q)P_q^{n-1,p-q}+2s\big(P_q^{n-1,p-q}\big)'\Big)\\
				&-(n-\beta)\uu A^{p-q}C^qD^{q}\Big((q+1)P_q^{n-1,p-q}-2s\big(P_q^{n-1,p-q}\big)'\Big),
			\end{align*}
			where we can apply the recurrence formulas (\ref{jacobi5}) and (\ref{jacobi6}) of Lemma \ref{JacRec2} once more to get
			\begin{align}
				\label{term5}\big((n-\beta)A^{p-q}C^qD^{q+1}P_q^{n-1,p-q}\big)\upud&=-(n-\beta)\uz A^{p-q-1}C^qD^{q+1}pP_q^{n,p-q-1}\\
				&-(n-\beta)\uu A^{p-q}C^qD^{q}\big(P_q^{n-1,p-q}-pP_{q-1}^{n,p-q}\big).\nonumber
			\end{align}
			\indent Collecting the $\uz$-parts from all five terms (\ref{term1}) - (\ref{term5}) results in
			\begin{align*}
				&\uz A^{p-q-1}C^qD^{q+1}p\Big((p-1)P_{q-1}^{n+1,p-q-1}+(n+p)P_q^{n+1,p-q-1}-\beta P_q^{n,p-q-1}\\
				&-(p-1)P_q^{n+1,p-q-2}-(n+p)sP_{q-1}^{p-q}-(n-\beta+1)P_q^{n,p-q-1}\Big).
			\end{align*}
			We will now show that this term vanishes by denoting
			\begin{align*}
				G(s)&=(p-1)P_{q-1}^{n+1,p-q-1}+(n+p)P_q^{n+1,p-q-1}-\beta P_q^{n,p-q-1}-(p-1)P_q^{n+1,p-q-2}\\
				&-(n+p)sP_{q-1}^{p-q}-(n-\beta+1)P_q^{n,p-q-1}\\
				&=(p-1)(P_{q-1}^{n+1,p-q-1}-P_q^{n+1,p-q-2})+(n+p)P_q^{n+1,p-q-1}-(n+1)P_q^{n,p-q-1}-2s\big(P_q^{n,p-q-1}\big)',
			\end{align*}
			where we substituted $(n+p)sP_{q-1}^{n+1,p-q}=2s\big(P_q^{n,p-q-1}\big)'$ according to the derivative relation 
			(\ref{jacobi5}) of Lemma \ref{JacRec1} and used	the commutator relations of Lemma \ref{beta}.	For the first term we now 
			can apply equation (\ref{jacobi1}) of Lemma \ref{JacRec1} and by subtracting and adding $qP_q^{n,p-q-1}$ we get
			\begin{align*}
				G(s)=-(n+p+q)P_q^{n,p-q-1}+qP_q^{n,p-q-1}-2s\big(P_q^{n,p-q-1}\big)'+(n+p)P_q^{n+1,p-q-1}.
			\end{align*}
			Once again we apply the recurrence relation (\ref{jacobi6}) on the second and third term, yielding
			\begin{align*}
			G(s)=-(n+p+q)P_q^{n,p-q-1}-(p-1)P_{q-1}^{n+1,p-q-1}+(n+p)P_q^{n+1,p-q-1}.
			\end{align*}
			Splitting the new term as $(p-1)P_{q-1}^{n+1,p-q-1}=\big((n+p+q)-(n+q+1)\big)P_{q-1}^{n+1,p-q-1}$ results in
			\begin{align*}
				G(s)=-(n+p+q)P_q^{n,p-q-1}-(n+p+q)P_{q-1}^{n+1,p-q-1}+(n+p)P_q^{n+1,p-q-1}+(n+q+1)P_{q-1}^{n+1,p-q-1}\Big),
			\end{align*}
			which allows us to use equation (\ref{jacobi3}) of Lemma \ref{JacRec1} on the last two terms to get
			\begin{align*}
				G(s)=P_q^{n+1,p-q-2}-P_q^{n,p-q-1}-P_{q-1}^{n+1,p-q-1}.
			\end{align*}
			Using equation (\ref{jacobi1}) of the same lemma results in
			\begin{align}
				\label{zterms}G(s)=P_{q-1}^{n+1,p-q-1}-P_{q-1}^{n+1,p-q-1},
			\end{align}
			which equals 0.\\
			\indent Collecting all $\uu$-terms from equations (\ref{term2}), (\ref{term4}) and (\ref{term5}) gives
			\begin{align*}
				\uu A^{p-q}C^qD^q&\Big((n+p)((\beta+p)P_q^{n,p-q}-pP_{q-1}^{n+1,p-q})\\
				&-(n+p)s\big(P_{q-1}^{n,p-q+1}-pP_{q-2}^{n+1,p-q+1}\big)\\
				&-(n-\beta+1)\big(P_q^{n-1,p-q}-pP_{q-1}^{n,p-q}\big)\Big)\\
				=\uu A^{p-q}C^qD^q&\bigg((n+p)\Big(p(P_q^{n,p-q}-P_{q-1}^{n+1,p-q})-s(P_{q-1}^{n,p-q+1}-pP_{q-2}^{n+1,p-q+1})\Big)
				-(n+1)\big(P_q^{n-1,p-q}-pP_{q-1}^{n,p-q}\big)\\
				&+\beta\Big((n+p)P_q^{n,p-q}+P_q^{n-1,p-q}-pP_{q-1}^{n,p-q}\Big)\bigg)\\
				=\uu A^{p-q}C^qD^q&\Big(G_1(s)+\beta G_2(s)\Big),
			\end{align*}
			where we collected with regard to $\beta$ and denoted
			\begin{align*}
				G_1(s)&=(n+p)\Big(p(P_q^{n,p-q}-P_{q-1}^{n+1,p-q})-s(P_{q-1}^{n,p-q+1}-pP_{q-2}^{n+1,p-q+1})\Big)
				-(n+1)\big(P_q^{n-1,p-q}-pP_{q-1}^{n,p-q}\big),\\
				G_2(s)&=(n+p)P_q^{n,p-q}+P_q^{n-1,p-q}-pP_{q-1}^{n,p-q}.
			\end{align*}
			We show subsequently that
			\begin{align*}
				G_1(s)&=(p-1)(n+p+q+1)P_q^{n-1,p-q},\\
				G_2(s)&=(n+p+q+1)P_q^{n-1,p-q},
			\end{align*}
			beginning with
			\begin{align*}
				G_1(s)&=(n+p)\Big(p(P_q^{n,p-q}-P_{q-1}^{n+1,p-q})\Big)-2s\Big(\big(P_q^{n-1,p-q}\big)'-p\big(P_{q-1}^{n,p-q}\big)'\Big)
				-(n+1)\big(P_q^{n-1,p-q}-pP_{q-1}^{n,p-q}\big),
			\end{align*}
			where we used the derivative relation (\ref{jacobi5}) on the term $(n+p)s(P_{q-1}^{n,p-q+1}-pP_{q-2}^{n+1,p-q+1})$. By
			adding and subtracting the correspondent terms $qP_q^{n-1,p-q}-p(q-1)P_{q-1}^{n,p-q}$ in order to use relation 
			(\ref{jacobi7}) we get
			\begin{align*}
				G_1(s)&=(n+p)\Big(p(P_q^{n,p-q}-P_{q-1}^{n+1,p-q})\Big)+(qP_q^{n-1,p-q}-p(q-1)P_{q-1}^{n,p-q})
				-2s\Big(\big(P_q^{n-1,p-q}\big)'-p\big(P_{q-1}^{n,p-q}\big)'\Big)\\
				&-(n+1)\big(P_q^{n-1,p-q}-pP_{q-1}^{n,p-q}\big)-(qP_q^{n-1,p-q}-p(q-1)P_{q-1}^{n,p-q})\\
				&=(n+p)\Big(p(P_q^{n,p-q}-P_{q-1}^{n+1,p-q})\Big)-(pP_{q-1}^{n,p-q}-p(p-1)P_{q-2}^{n+1,p-q})\\
				&-(n+1)\big(P_q^{n-1,p-q}-pP_{q-1}^{n,p-q}\big)-(qP_q^{n-1,p-q}-p(q-1)P_{q-1}^{n,p-q}).
			\end{align*}
			Splitting $P_q^{n-1,p-q+1}$ and $P_{q-1}^{n,p-q}$ according to (\ref{jacobi1}) gives
			\begin{align*}
				G_1(s)&=p(n+p)\Big(P_q^{n-1,p-q+1}+P_{q-1}^{n,p-q+1}\Big)-p(n+p)P_{q-1}^{n+1,p-q}+p(p-1)P_{q-2}^{n+1,p-q}
				-(n+q+1)P_{q}^{n-1,p-q}\\
				&+p(n+q-1)\Big(P_{q-1}^{n-1,p-q+1}+P_{q-2}^{n,p-q+1}\Big)\\
				&=p\Big((n+p)P_q^{n-1,p-q+1}+(n+q-1)P_{q-1}^{n-1,p-q+1}\Big)+p(n+p)\Big(P_{q-1}^{n,p-q+1}-P_{q-1}^{n+1,p-q}\Big)\\
				&+p(p-1)P_{q-2}^{n+1,p-q}-(n+q+1)P_{q}^{n-1,p-q}+p(n+q-1)P_{q-2}^{n,p-q+1}.
			\end{align*}
			We apply (\ref{jacobi3}) to the first term and add and subtract $pP_q^{n-1,p-q}$ to get
			\begin{align*}
				G_1(s)&=p(n+p+q+1)P_q^{n-1,p-q}+p\Big((n+p)P_{q-1}^{n,p-q+1}+(n+q-1)P_{q-2}^{n,p-q+1}\Big)\\
				&-p(n+p)P_{q-1}^{n+1,p-q}+p(p-1)P_{q-2}^{n+1,p-q}-(n+p+q+1)P_{q}^{n-1,p-q}.
			\end{align*}
			This allows us to use relation (\ref{jacobi3}) on the second term, hence
			\begin{align*}
				G_1(s)&=(p-1)(n+p+q+1)P_q^{n-1,p-q}+p\Big((n+p+q-1)P_{q-1}^{n,p-q}\\
				&-(n+p)P_{q-1}^{n+1,p-q}+p(p-1)P_{q-2}^{n+1,p-q}\Big).
			\end{align*}
			Applying relation (\ref{jacobi3}) to the third term $(n+p)P_{q-1}^{n+1,p-q}$ yields
			\begin{align*}
				G_1(s)&=(p-1)(n+p+q+1)P_q^{n-1,p-q}+p(n+p+q-1)\Big(P_{q-1}^{n,p-q}-P_{q-1}^{n+1,p-q+1}+P_{q-2}^{n+1,p-q}\Big)\\
				&=(p-1)(n+p+q+1)P_q^{n-1,p-q},
			\end{align*}
			where the last term vanished due to (\ref{jacobi1}). For the second part $G_2(s)$ we get
			\begin{align*}
				G_2(s)&=(n+p)P_q^{n,p-q}+P_q^{n-1,p-q}-pP_{q-1}^{n,p-q}\\
				&=(n+p+q+1)P_q^{n-1,p-q}+(n+p)P_q^{n,p-q}-(n+p+q)P_q^{n-1,p-q}-pP_{q-1}^{n,p-q},
			\end{align*}
			where we added and subtracted $(n+p+q)P_q^{n-1,p-q}$. Splitting $(n+p+q)P_q^{n-1,p-q}$ according to relation (\ref{jacobi3}) yields
			\begin{align*}
				G_2(s)&=(n+p+q+1)P_q^{n-1,p-q}+(n+p)\Big(P_q^{n,p-q}-P_q^{n-1,p-q+1}\Big)+(n+q+1)P_{q-1}^{n-1,p-q+1}-pP_{q-1}^{n,p-q}\\
				&=(n+p+q+1)P_q^{n-1,p-q}+(n+p)P_{q-1}^{n,p-q+1}+(n+q+1)\Big(P_{q-2}^{n,p-q+1}-P_{q-1}^{n,p-q}\Big)-pP_{q-1}^{n,p-q}\\
				&=(n+p+q+1)P_q^{n-1,p-q}+(n+p)P_{q-1}^{n,p-q+1}+(n+q+1)P_{q-2}^{n,p-q+1}-(n+p+q+1)P_{q-1}^{n,p-q}\\
				&=(n+p+q+1)P_q^{n-1,p-q}.
			\end{align*}
			where we also used relation (\ref{jacobi1}) twice. The last equality is true due to relation (\ref{jacobi3}). For the $\uu$-terms we therefore get
			\begin{align}
				\uu A^{p-q}C^qD^q&\Big(G_1(s)+\beta G_2(s)\Big)=\uu A^{p-q}C^qD^q(n+p+q+1)(p-1+\beta)P_q^{n-1,p-q}\nonumber\\
				\label{uterms}&=(p+\beta)\uu A^{p-q}C^qD^q(n+p+q+1)P_q^{n-1,p-q},
			\end{align}
			where we used the commutator relation (\ref{comm1}) in Lemma \ref{beta}.\\
			\indent Combining the $\uz\uzd\uu$-parts of terms (\ref{term1}) and (\ref{term4}) gives
			\begin{align}
				&\uz\uzd\uu A^{p-q}C^{q-1}D^qp\big(2P_{q-1}^{n,p-q}-(p-1)P_{q-2}^{n+1,p-q}+(n+p)P_{q-1}^{n+1,p-q}\big)\nonumber\\
				=&\uz\uzd\uu A^{p-q}C^{q-1}D^qp(2P_{q-1}^{n,p-q}+(n+p+q-1)P_{q-1}^{n,p-q})\nonumber\\
				\label{zzuterms}=&\uz\uzd\uu A^{p-q}C^{q-1}D^qp(n+p+q+1)P_{q-1}^{n,p-q}.
			\end{align}
			where we used relation (\ref{jacobi3}) of Lemma \ref{JacRec1}.\\
			\indent For the $\uz\uud\uu$ parts of the equations (\ref{term2}) and (\ref{term3}) we have
			\begin{align*}
				-\uz\uud\uu A^{p-q-1}C^qD^qp\big(P_q^{n,p-q-1}-(p-1)P_{q-1}^{n+1,p-q-1}+(n+p)P_q^{n+1,p-q-1}\big).
			\end{align*}
			By adding and subtracting $(n+1+q)P_{q-1}^{n+1,p-q-1}$ and using relation (\ref{jacobi3}) on the last term we get
			\begin{align}
				&-\uz\uud\uu A^{p-q-1}C^qD^qp\big((n+p+q)(P_q^{n+1,p-q-2}-P_{q-1}^{n+1,p-q-1})+P_q^{n,p-q-1}\big)\nonumber\\
				\label{zuuterms}=&-\uz\uud\uu A^{p-q-1}C^qD^qp(n+p+q+1)P_q^{n,p-q-1}.
			\end{align}
			Collecting the individual terms (\ref{zterms}), (\ref{uterms}), (\ref{zzuterms}) and (\ref{zuuterms}) completes the proof.
			\end{proof}
			In the final step of the computation we will apply the last Dirac operator $\upu$ from the right on the result of the previous lemma. Quite surprisingly, the end
			result is expressed as a linear combination of six Jacobi polynomials multiplied with suitable Clifford numbers.
			\begin{theorem}
			\label{Dirac4}
				If the operator $\upzd\upz$ is applied from the left and $\upud\upu$ from the right on the harmonic kernel $K_{p+1,q+1}^n$ for $p>q\geq 1$ one gets
				\begin{align*}
					\upzd\upz K_{p+1,q+1}^n\upud\upu=&c_{p+1,q+1}(p+1)(n+p+q+1)\la z,u\ra ^{p-q-1}\la z,z\ra ^{q-1}\la u,u\ra ^{q-1}\\
					\times&\bigg(\la z,u\ra \la z,z\ra \la u,u\ra (\beta+p)(n-\beta+q)P_{q}^{n-1,p-q}\\
					-&\la z,u\ra \la u,u\ra p(\beta+p)\uz\wedge\uzd P_{q-1}^{n,p-q}-\la z,u\ra \la z,z\ra p(\beta+p)\uu\wedge \uud P_{q-1}^{n,p-q}\\
					-&\la z,u\ra ^2(n+p)(\beta+p)\uzd\uu P_{q-1}^{n,p-q+1}-\la z,z\ra \la u,u\ra p(n-\beta+q)\uz\uud P_{q}^{n,p-q-1}\\
					+&\la z,u\ra p(n+p+q)\uz\uzd\uu\uud P_{q-1}^{n,p-q}\bigg).
				\end{align*}
			\end{theorem}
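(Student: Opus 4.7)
The plan is to continue the chain of computations begun in Lemmas \ref{Dirac1}--\ref{Dirac3} by applying $\upu$ from the right to the three-term expression obtained in Lemma \ref{Dirac3}. Each summand is processed independently via the product rule, yielding three sources of contribution per term: differentiation of the scalar prefactors $A^{a}C^{b}D^{c}$ (recalling $\upu A = \upu C = 0$ and $\upu D = \uud$ read from the right), differentiation of the Clifford-vector factor $\uu$, $\uz\uud\uu$ or $\uz\uzd\uu$ appearing just to the left of $\upu$ via right-action identities analogous to $\upzd\uzd = n-\beta$ from Lemma \ref{beta}, and differentiation of the Jacobi polynomial through its $s$-dependence via Lemma \ref{DiracP}.

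Once $\upu$ is distributed, the resulting Clifford products must be reduced using the nilpotencies $\uu\uu = \uud\uud = 0$ from Remark \ref{nil}, the anti-commutators $\{\uu,\uud\}=D$, $\{\uz,\uud\}=A$, $\{\uzd,\uu\}=B$ and $\{\uz,\uzd\}=C$, together with the commutation relations (\ref{comm1}) to move $\beta$ past intervening vectors (as needed to convert expressions like $\uu\uud\uz$ into manageable form via $\uu\uud\uz = \uu A + \uz(D - \uud\uu)$, exactly in the style of Lemma \ref{Dirac3}). The outputs are then collected by Clifford type into scalars, the wedges $\uz\wedge\uzd$ and $\uu\wedge\uud$, the mixed products $\uzd\uu$ and $\uz\uud$, and the four-vector $\uz\uzd\uu\uud$, matching the six Clifford types in the statement. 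The factor $(\beta+p)(n-\beta+q)$ in the scalar term will emerge naturally: Lemma \ref{Dirac3} already contributes the left factor $(\beta+p)$, while the right action of $\upu$ on $\uu D^q$ generates the right factor $n-\beta+q$ after combining the analogue of $\uu\upu=n-\beta$ with the homogeneity contribution $q$ from $D^q$.

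The main obstacle is the Jacobi polynomial bookkeeping. After all Clifford reductions, each of the six Clifford monomials will be multiplied by a linear combination of Jacobi polynomials with parameters close to $(n-1,p-q)$, $(n,p-q-1)$, $(n,p-q)$ and $(n,p-q+1)$, and the task is to collapse each such combination to the single polynomial prescribed by the statement. This is accomplished by repeated use of the three-term relations (\ref{jacobi1})--(\ref{jacobi3}), the derivative formula (\ref{jacobi5}), and the specialised identities (\ref{jacobi6})--(\ref{jacobi8}), in the same spirit as the auxiliary functions $G(s)$, $G_1(s)$, $G_2(s)$ that appear in the proof of Lemma \ref{Dirac3}. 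I expect the heaviest telescoping to occur for the scalar component and for the two wedge pieces, as these receive contributions from all three input terms; extracting the common prefactor $(p+1)(n+p+q+1)$ requires a careful cascade of applications of (\ref{jacobi3}) combined with (\ref{jacobi1}), reducing expressions of the form $(n+p)P_{k}^{n+1,\cdot} - (n+p+q)P_{k-1}^{n+1,\cdot}$ to single Jacobi polynomials with parameter $n$ in the first slot.
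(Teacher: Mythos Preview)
Your proposal is correct and follows essentially the same approach as the paper: apply $\upu$ from the right to each of the three terms of Lemma \ref{Dirac3} via the product rule and Lemma \ref{DiracP}, reduce the resulting Clifford products using nilpotency and the anti-commutators, sort by Clifford type, and collapse the Jacobi combinations with (\ref{jacobi1}), (\ref{jacobi3}), (\ref{jacobi5}) and (\ref{jacobi6})--(\ref{jacobi7}). Two minor corrections to your expectations: the prefactor $(p+1)(n+p+q+1)$ is already present in Lemma \ref{Dirac3} and is simply carried through rather than extracted, and the wedge $\uu\wedge\uud$ actually arises from only one of the three input terms (via $\uu\uud=\uu\wedge\uud+\tfrac{1}{2}D$ in the first term), so the heavier telescoping is confined to the scalar, the $\uz\wedge\uzd$ piece, and the $\uz\uzd\uu\uud$ piece; relation (\ref{jacobi8}) is not needed here.
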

		\begin{proof}
			We begin again by computing the action of the Dirac operator $\upu$ from the left on the first term of Lemma \ref{Dirac3}, yielding
			\begin{align*}
				\big(A^{p-q}C^qD^q(\beta+p)\uu P_q^{n-1,p-q}\big)\upu&=A^{p-q}C^qD^q(\beta+p)(n-\beta)P_q^{n-1,p-q}\\
				&+A^{p-q}C^qD^{q-1}(\beta+p)\uu\uud qP_q^{n-1,p-q}\\
				&+A^{p-q}C^qD^q(\beta+p)\uu\big(\uzd\frac{s}{B}-\uud\frac{s}{D}\big)2\big(P_q^{n-1,p-q}\big)'.
			\end{align*}
			By collecting with respect to scalars and bivectors and using $\uu\uzd\frac{s}{B}=(B-\uzd\uu)\frac{s}{B}=s-\uzd\uu\frac{A}{CD}$ we get
			\begin{align*}
				\big(A^{p-q}C^qD^q(\beta+p)\uu P_q^{n-1,p-q}\big)\upu&=A^{p-q}C^qD^q(\beta+p)\big((n-\beta)P_q^{n-1,p-q}+2s\big(P_q^{n-1,p-q}\big)'\big)\\
				&+A^{p-q}C^qD^{q-1}(\beta+p)\uu\uud\big(qP_q^{n-1,p-q}-2s\big(P_q^{n-1,p-q}\big)'\big)\\
				&-A^{p-q+1}C^{q-1}D^{q-1}(\beta+p)\uzd\uu(n+p)P_{q-1}^{n,p-q+1},
			\end{align*}
			where we also used the derivative relation (\ref{jacobi5}) of Lemma \ref{JacRec1} on the last term. Here we can use relation (\ref{jacobi6}) 
			on the second term and, after adding and subtracting $qP_q^{n-1,p-q}$, also on the first one, yielding
			\begin{align*}
				\big(A^{p-q}C^qD^q(\beta+p)\uu P_q^{n-1,p-q}\big)\upu&=A^{p-q}C^qD^q(\beta+p)\big((n-\beta+q)P_q^{n-1,p-q}-(qP_q^{n-1,p-q}-2s\big(P_q^{n-1,p-q}\big)')\big)\\
				&+A^{p-q}C^qD^{q-1}(\beta+p)\uu\uud\big(qP_q^{n-1,p-q}-2s\big(P_q^{n-1,p-q}\big)'\big)\\
				&-A^{p-q+1}C^{q-1}D^{q-1}(\beta+p)\uzd\uu(n+p)P_{q-1}^{n,p-q+1}\\
				&=A^{p-q}C^qD^q(\beta+p)(n-\beta+q)P_q^{n-1,p-q}\\
				&+A^{p-q}C^qD^q(\beta+p)pP_{q-1}^{n,p-q}\\
				&-A^{p-q}C^qD^{q-1}(\beta+p)p\uu\uud P_{q-1}^{n,p-q}\\
				&-A^{p-q+1}C^{q-1}D^{q-1}(\beta+p)\uzd\uu(n+p)P_{q-1}^{n,p-q+1}.
			\end{align*}
		By writing the Clifford product $\uu\uud=\uu\wedge\uud+\frac{1}{2}D$ as a sum of a wedge and a dot product we get 
		\begin{align}
				\label{term6}\big(A^{p-q}C^qD^q(\beta+p)\uu P_q^{n-1,p-q}\big)\upu&=A^{p-q}C^qD^q(\beta+p)(n-\beta+q)P_q^{n-1,p-q}\\
				&+\frac{1}{2}A^{p-q}C^qD^q(\beta+p)pP_{q-1}^{n,p-q}\nonumber\\
				&-A^{p-q}C^qD^{q-1}(\beta+p)p\uu\wedge\uud P_{q-1}^{n,p-q}\nonumber\\
				&-A^{p-q+1}C^{q-1}D^{q-1}(\beta+p)\uzd\uu(n+p)P_{q-1}^{n,p-q+1}.\nonumber
			\end{align}
		Applying $\upu$ from the left on the second term of Lemma \ref{Dirac3} results in
		\begin{align*}
			\big(-A^{p-q-1}C^qD^q\uz\uud\uu pP_q^{n,p-q-1}\big)\upu&=-A^{p-q-1}C^qD^q\uz\uud(n-\beta)pP_q^{n,p-q-1}\\
			&-A^{p-q-1}C^qD^{q-1}\uz\uud\uu\uud pqP_q^{n,p-q-1}\\
			&-A^{p-q-1}C^qD^q\uz\uud\uu\big(\uzd\frac{s}{B}-\uud\frac{s}{D}\big)2p\big(P_q^{n,p-q-1}\big)'.
		\end{align*}
		We use $\uz\uud\uu\uzd=\uz\uud B+\uz\uzd D-\uz\uzd\uu\uud$ and $\uz\uud\uu\uud=\uz\uud D$ to get
		\begin{align*}
		\big(-A^{p-q-1}C^qD^q\uz\uud\uu pP_q^{n,p-q-1}\big)\upu&=-A^{p-q-1}C^qD^q\uz\uud(n-\beta+q)pP_q^{n,p-q-1}\\
			&+A^{p-q-1}C^qD^q\uz\uud2sp\big(P_q^{n,p-q-1}\big)'-A^{p-q-1}C^qD^q\uz\uud2sp\big(P_q^{n,p-q-1}\big)'\\
			&-A^{p-q}C^{q-1}D^q\uz\uzd p(n+p)P_{q-1}^{n+1,p-q}\\
			&+A^{p-q}C^{q-1}D^{q-1}\uz\uzd\uu\uud p(n+p)P_{q-1}^{n+1,p-q},
		\end{align*}
		where we also substituted $2\big(P_q^{n,p-q-1}\big)'=(n+p)P_{q-1}^{n+1,p-q}$.	Expanding the product $\uz\uzd=\uz\wedge\uzd+\frac{1}{2}C$
		gives
		\begin{align}
		\label{term7}\big(-A^{p-q-1}C^qD^q\uz\uud\uu pP_q^{n,p-q-1}\big)\upu&=-A^{p-q-1}C^qD^q\uz\uud(n-\beta+q)pP_q^{n,p-q-1}\\
			&-A^{p-q}C^{q-1}D^q\uz\wedge\uzd p(n+p)P_{q-1}^{n+1,p-q}\nonumber\\
			&-\frac{1}{2}A^{p-q}C^{q}D^qp(n+p)P_{q-1}^{n+1,p-q}\nonumber\\
			&+A^{p-q}C^{q-1}D^{q-1}\uz\uzd\uu\uud p(n+p)P_{q-1}^{n+1,p-q}.\nonumber
		\end{align}
		For the third term of Lemma \ref{Dirac3} we get
		\begin{align*}
			\big(A^{p-q}C^{q-1}D^q\uz\uzd\uu pP_{q-1}^{n,p-q}\big)\upu&=A^{p-q}C^{q-1}D^q\uz\uzd(n-\beta)pP_{q-1}^{n,p-q}\\
			&+A^{p-q}C^{q-1}D^{q-1}\uz\uzd\uu\uud pqP_{q-1}^{n,p-q}\\
			&+A^{p-q}C^{q-1}D^q\uz\uzd\uu\big(\uzd\frac{s}{B}-\uud\frac{s}{D}\big)2p\big(P_{q-1}^{n,p-q}\big)'.
		\end{align*}
		Using $\uz\uzd\uu\uzd=\uz\uzd B$ we collect the result in terms of scalars and bivectors, hence
		\begin{align*}
			\big(A^{p-q}C^{q-1}D^q\uz\uzd\uu pP_{q-1}^{n,p-q}\big)\upu&=A^{p-q}C^{q-1}D^q\uz\uzd(n-\beta)pP_{q-1}^{n,p-q}\\
			&+A^{p-q}C^{q-1}D^q\uz\uzd 2sp\big(P_{q-1}^{n,p-q}\big)'\\
			&+A^{p-q}C^{q-1}D^{q-1}\uz\uzd\uu\uud p\big(qP_{q-1}^{n,p-q}-2s\big(P_{q-1}^{n,p-q}\big)'\big).
		\end{align*}
		By adding und subtracting $-(q-1)P_{q-1}^{n,p-q}$ to the second term and $-P_{q-1}^{n,p-q}$ to the third one we can use relation (\ref{jacobi6}) of Lemma \ref{JacRec2}, hence
		\begin{align*}
			\big(A^{p-q}C^{q-1}D^q\uz\uzd\uu pP_{q-1}^{n,p-q}\big)\upu&=A^{p-q}C^{q-1}D^q\uz\uzd(n-\beta+q-1)pP_{q-1}^{n,p-q}\\
			&-A^{p-q}C^{q-1}D^q\uz\uzd p\big((q-1)P_{q-1}^{n,p-q}-2s\big(P_{q-1}^{n,p-q}\big)'\big)\\
			&+A^{p-q}C^{q-1}D^{q-1}\uz\uzd\uu\uud p\big((q-1)P_{q-1}^{n,p-q}-2s\big(P_{q-1}^{n,p-q}\big)'\big)\\
			&+A^{p-q}C^{q-1}D^{q-1}\uz\uzd\uu\uud pP_{q-1}^{n,p-q}\\
			&=A^{p-q}C^{q-1}D^q\uz\uzd(n-\beta+q-1)pP_{q-1}^{n,p-q}\\
			&+A^{p-q}C^{q-1}D^q\uz\uzd p(p-1)P_{q-2}^{n+1,p-q}\\
			&-A^{p-q}C^{q-1}D^{q-1}\uz\uzd\uu\uud p(p-1)P_{q-2}^{n+1,p-q}\\
			&+A^{p-q}C^{q-1}D^{q-1}\uz\uzd\uu\uud pP_{q-1}^{n,p-q}.
		\end{align*}
		Using $\uz\uzd=\uz\wedge\uzd+\frac{1}{2}C$ in the first and second term results in
		\begin{align}
			\label{term8}\big(A^{p-q}C^{q-1}D^q\uz\uzd\uu pP_{q-1}^{n,p-q}\big)\upu&=\frac{1}{2}A^{p-q}C^{q}D^q(n-\beta+q-1)pP_{q-1}^{n,p-q}\\
			&+\frac{1}{2}A^{p-q}C^{q}D^qp(p-1)P_{q-2}^{n+1,p-q}\nonumber\\
			&+A^{p-q}C^{q-1}D^q\uz\wedge\uzd(n-\beta+q-1)pP_{q-1}^{n,p-q}\nonumber\\
			&+A^{p-q}C^{q-1}D^q\uz\wedge\uzd p(p-1)P_{q-2}^{n+1,p-q}\nonumber\\
			&-A^{p-q}C^{q-1}D^{q-1}\uz\uzd\uu\uud p(p-1)P_{q-2}^{n+1,p-q}\nonumber\\
			&+A^{p-q}C^{q-1}D^{q-1}\uz\uzd\uu\uud pP_{q-1}^{n,p-q}.\nonumber
		\end{align}
		When collecting those parts of (\ref{term6}), (\ref{term7}) and (\ref{term8}) that only contain scalars or the para-bivector $\beta$ we get
		\begin{align*}
			&A^{p-q}C^qD^q\Big((\beta+p)(n-\beta+q)P_q^{n-1,p-q}\\
			&+\frac{1}{2}p\big((\beta+p)P_{q-1}^{n,p-q}-(n+p)P_{q-1}^{n+1,p-q}\\
			&+(n-\beta+q-1)P_{q-1}^{n,p-q}+(p-1)P_{q-2}^{n+1,p-q}\big)\Big)\\
			=&A^{p-q}C^qD^q\Big((\beta+p)(n-\beta+q)P_q^{n-1,p-q}\\
			&+\frac{1}{2}p\big((n+p+q-1)P_{q-1}^{n,p-q}-(n+p)P_{q-1}^{n+1,p-q}\\
			&+(p-1)P_{q-2}^{n+1,p-q}\big)\Big),
		\end{align*}
		where we can use relation (\ref{jacobi3}) on $(n+p)P_{q-1}^{n+1,p-q}$, hence
		\begin{align*}
			&A^{p-q}C^qD^q\Big((\beta+p)(n-\beta+q)P_q^{n-1,p-q}\\
			&+\frac{1}{2}p\big((n+p+q-1)P_{q-1}^{n,p-q}-(n+p+q-1)P_{q-1}^{n+1,p-q-1}+(n+q)P_{q-2}^{n+1,p-q}\\
			&+(p-1)P_{q-2}^{n+1,p-q}\big)\Big)\\
			=&A^{p-q}C^qD^q\Big((\beta+p)(n-\beta+q)P_q^{n-1,p-q}+\frac{1}{2}p\big((n+p+q-1)(P_{q-1}^{n,p-q}-P_{q-1}^{n+1,p-q-1})\\
			&+(n+p+q-1)P_{q-2}^{n+1,p-q}\big)\Big).
		\end{align*}
		Applying relation (\ref{jacobi1}) on the second term results in
		\begin{align}
			&A^{p-q}C^qD^q\Big((\beta+p)(n-\beta+q)P_q^{n-1,p-q}\nonumber\\
			&+\frac{1}{2}p\big(-(n+p+q-1)P_{q-2}^{n+1,p-q}+(n+p+q-1)P_{q-2}^{n+1,p-q}\big)\Big)\nonumber\\
			\label{betaterm}=&A^{p-q}C^qD^q(\beta+p)(n-\beta+q)P_q^{n-1,p-q}.
		\end{align}
		Collecting all parts of (\ref{term7}) and (\ref{term8}) that contain the bivector $\uz\wedge\uzd$ gives
		\begin{align*}
			&-A^{p-q}C^{q-1}D^q\uz\wedge\uzd p\Big((n+p)P_{q-1}^{n+1,p-q}-(n-\beta+q-1)P_{q-1}^{n,p-q}-(p-1)P_{q-2}^{n+1,p-q}\Big).
		\end{align*}
		As before we subsequently apply relation (\ref{jacobi3}) on $(n+p)P_{q-1}^{n+1,p-q}$ and relation (\ref{jacobi1}) to get
		\begin{align}
			&-A^{p-q}C^{q-1}D^q\uz\wedge\uzd p\Big((n+p+q-1)P_{q-1}^{n+1,p-q-1}-(n+q)P_{q-2}^{n+1,p-q}\nonumber\\
			&-(n-\beta+q-1)P_{q-1}^{n,p-q}-(p-1)P_{q-2}^{n+1,p-q}\Big)\nonumber\\
			=&-A^{p-q}C^{q-1}D^q\uz\wedge\uzd p\Big((n+p+q-1)(P_{q-1}^{n+1,p-q-1}-P_{q-2}^{n+1,p-q})-(n-\beta+q-1)P_{q-1}^{n,p-q}\Big)\nonumber\\
			=&-A^{p-q}C^{q-1}D^q\uz\wedge\uzd p\Big((n+p+q-1)P_{q-1}^{n,p-q}-(n-\beta+q-1)P_{q-1}^{n,p-q}\Big)\nonumber\\
			\label{zzterm}=&-A^{p-q}C^{q-1}D^qp(\beta+p)\uz\wedge\uzd P_{q-1}^{n,p-q}.
		\end{align}
		Collecting parts of equations (\ref{term7}) and (\ref{term8}) that contain $\uz\uzd\uu\uud$ results in
		\begin{align*}
			A^{p-q}C^qD^qp\Big(-(p-1)P_{q-2}^{n+1,p-q}+P_{q-1}^{n,p-q}+(n+p)P_{q-1}^{n+1,p-q}\Big).
		\end{align*}
		Once more we apply recurrence relation (\ref{jacobi3}) on $(n+p)P_{q-1}^{n+1,p-q}$ and relation (\ref{jacobi1}) to get
		\begin{align}
			&A^{p-q}C^qD^qp\Big(-(p-1)P_{q-2}^{n+1,p-q}+P_{q-1}^{n,p-q}+(n+p+q-1)P_{q-1}^{n+1,p-q+1}-(n+q)P_{q-2}^{n+1,p-q}\nonumber\\
			=&A^{p-q}C^qD^qp\Big((p-1)(P_{q-1}^{n+1,p-q+1}-P_{q-2}^{n+1,p-q})+(n+q)(P_{q-1}^{n+1,p-q+1}-P_{q-2}^{n+1,p-q})+P_{q-1}^{n,p-q}\Big)\nonumber\\
			=&A^{p-q}C^qD^qp\Big((p-1)P_{q-1}^{n,p-q}+(n+q)P_{q-1}^{n,p-q}+P_{q-1}^{n,p-q}\Big)\nonumber\\
			\label{zzuuterm}=&A^{p-q}C^qD^qp(n+p+q)P_{q-1}^{n,p-q}.
		\end{align}
		The bivector $\uu\wedge\uud$ as well as the para-bivector $\uzd\uu$ can only be found in equation (\ref{term6}). The para-bivector $\uz\uud$ is 
		part of equation (\ref{term7}). These terms are
		\begin{align}
			\label{uuterm}&-A^{p-q}C^qD^{q-1}(\beta+p)p\uu\wedge\uud P_{q-1}^{n,p-q},\\
			\label{zudterm}&-A^{p-q-1}C^qD^qp(n-\beta+q)\uz\uud P_q^{n,p-q-1}\text{ and}\\
			\label{zuterm}&-A^{p-q+1}C^{q-1}D^{q-1}(\beta+p)(n+p)\uzd\uu P_{q-1}^{n,p-q+1}.
		\end{align}
		Combining the results from (\ref{betaterm}) to (\ref{zuterm}) completes the proof of the theorem.
		\end{proof}
		In the above theorem the condition on the homogeneity $(p+1,q+1)$ of $K_{p+1,q+1}^{n}$ is $p>q\geq 1$. However, the symmetric case of homogeneity $(q+1,p+1)$ can
		immediately be derived from it.
		\begin{corollary}
			\label{symm}
			When considering the harmonic kernel
			\begin{align*}
				K_{q+1,p+1}^n(z,u)=\overline{\la z,u\ra}^{p-q}\la z,z\ra^{q+1}\la u,u\ra^{q+1}P_{q+1}^{n-2,p-q}(2s-1)
			\end{align*}
			of homogeneity $(q+1,p+1)$ with $p>q\geq 1$ and applying the operators $\upzd\upz$ and $\upud\upu$ from the left and right respectively we can use the
			result of Theorem \ref{Dirac4} and the Hermitian symmetry of $K_{p+1,q+1}^n(z,u)=\overline{K_{q+1,p+1}^n(z,u)}=K_{q+1,p+1}^n(u,z)$, that is
			\begin{align*}
				\upzd\upz K_{q+1,p+1}^n(z,u)\upud\upu&=\Big(\upu\upud\overline{K_{q+1,p+1}^n(z,u)}\upz\upzd\Big)^\dagger\\
				&=\overline{\Big(\upud\upu K_{p+1,q+1}^n(u,z)\upzd\upz\Big)^\dagger}.
			\end{align*}
			In the last step the identity $\upzd=-\overline{\upz}$ was used. The same identity for a Clifford vector $\uzd=-\overline\uz$ helps to interpret the above expression
			as the result of Theorem \ref{Dirac4} where the roles of $z$ and $u$ are exchanged and the 4-vector as well as the bivectors are reversed. 
		\end{corollary}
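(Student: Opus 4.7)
The plan is to prove the corollary by a symmetry argument rather than by repeating the long computation of Theorem \ref{Dirac4}. The starting point is the pair of Hermitian-symmetry identities $\overline{K_{q+1,p+1}^n(z,u)} = K_{p+1,q+1}^n(z,u) = K_{q+1,p+1}^n(u,z)$, each of which I would check directly from the explicit formula of the harmonic kernel: the only non-real and non-symmetric ingredient is $\la z,u\ra$, whose complex conjugate equals $\la u,z\ra$, while the factors $\la z,z\ra$, $\la u,u\ra$, the angular variable $s$, and the Jacobi polynomial $P_{q+1}^{n-2,p-q}(2s-1)$ are invariant under both operations.

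Next I would verify the two displayed equalities. For the first, the key point is that Hermitian conjugation $\dagger$ is an anti-involution on $\mC_{2n}$, so that applied to a four-fold Clifford-valued product it reverses the order of multiplications and conjugates the scalar middle factor; combined with the operator identities $\upz^\dagger = \upzd$, $\upu^\dagger = \upud$ (from $(f_j^\dagger)^\dagger = f_j$ and the fact that $\dagger$ sends a holomorphic derivative to its antiholomorphic counterpart) together with $(\overline{K})^\dagger = K$ for a scalar-valued $K$, this converts the configuration $\upu\upud\,(\cdot)\,\upz\upzd$ applied to $\overline{K_{q+1,p+1}^n}$ into exactly $\upzd\upz\,K_{q+1,p+1}^n\,\upud\upu$. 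For the second equality, the complementary identities $\overline{\upz} = -\upzd$ and $\overline{\upu} = -\upud$ are used: complex-conjugating the four-fold operator expression $\upud\upu\,K_{p+1,q+1}^n(u,z)\,\upzd\upz$ produces an even number of sign flips (which cancel), replaces the scalar factor by $\overline{K_{p+1,q+1}^n(u,z)} = K_{q+1,p+1}^n(u,z) = \overline{K_{q+1,p+1}^n(z,u)}$ via the first step, and so matches the middle expression.

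With the chain of equalities in hand, the rightmost expression can be evaluated by Theorem \ref{Dirac4} itself. The variable swap $z \leftrightarrow u$ exchanges $\upu \leftrightarrow \upz$ and $\upud \leftrightarrow \upzd$, so $\upud\upu\,K_{p+1,q+1}^n(u,z)\,\upzd\upz$ is, after relabelling, exactly the left-hand side of Theorem \ref{Dirac4} in new variables, which yields the explicit six-term Jacobi-polynomial expansion with $z$ and $u$ interchanged. Undoing the outer $\dagger$ and complex conjugation then gives the final formula: the scalar coefficient and the self-conjugate bivectors $\uz\wedge\uzd$ and $\uu\wedge\uud$ survive with $z,u$ swapped, while the para-bivectors $\uzd\uu$, $\uz\uud$ and the 4-vector $\uz\uzd\uu\uud$ appear with their Clifford products order-reversed. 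The main obstacle I anticipate is the careful bookkeeping of signs and orderings under the interplay of complex conjugation (which sends $f_j \leftrightarrow -f_j^\dagger$) and Hermitian conjugation (an anti-involution sending $f_j \leftrightarrow f_j^\dagger$), especially when each of them interchanges left- and right-action of the Dirac operators.
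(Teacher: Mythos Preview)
Your proposal is correct and follows exactly the approach the paper takes: the corollary in the paper has no separate proof environment because the argument is embedded in the statement itself, and you have simply fleshed out that sketch---verifying the Hermitian symmetry of the scalar kernel, using that $\dagger$ is an anti-involution together with $\upzd=-\overline{\upz}$ to obtain the displayed chain of equalities, and then reading off the result from Theorem~\ref{Dirac4} with $z\leftrightarrow u$ and the Clifford products reversed. Your anticipated obstacle (sign bookkeeping under the two conjugations) is real but routine, and your identifications $\upz^\dagger=\upzd$ and $\overline{\upz}=-\upzd$ are correct.
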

		The result of Theorem \ref{Dirac4} also holds formally for the special case $q=0$ by identifying Jacobi polynomials of negative degree with $0$. That is due to the
		validity of the recurrence formulas in this case as stated in Remark \ref{pzero}.
		\begin{lemma}
			Applying the two operators $\upzd\upz$ and $\upud\upu$ to the harmonic kernel of homogeneity $(p+1,1)$ gives
			\begin{align*}
				\upzd\upz K_{p+1,1}^n\upud\upu&=c_{p+1,1}(p+1)(n+p+1)(n-\beta)\Big(\la z,u\ra^p(\beta+p)-p\la z,u\ra^{p-1}\uz\uud\Big).
			\end{align*}
			For the symmetric case $(1,p+1)$ we get according to Corollary \ref{symm}
			\begin{align*}
				\upzd\upz K_{1,p+1}^n\upud\upu&=c_{1,p+1}(p+1)(n+p+1)\beta\Big(\overline{\la z,u\ra}^p(n-\beta+p)-p\overline{\la z,u\ra}^{p-1}\uzd\uu\Big).
			\end{align*}
		\end{lemma}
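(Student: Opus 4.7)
The plan is to obtain the lemma as the formal $q=0$ specialization of Theorem \ref{Dirac4}. As observed in the paragraph immediately preceding the lemma, Remark \ref{pzero} guarantees that the Jacobi-polynomial recurrences (\ref{jacobi1})--(\ref{jacobi3}), (\ref{jacobi6}), (\ref{jacobi7}) on which the chain of calculations in Lemmas \ref{Dirac1}--\ref{Dirac3} and Theorem \ref{Dirac4} rests remain valid under the convention $P_{-1}^{a,b}(x)\equiv 0$. Consequently no step of that chain is invalidated by letting $q\to 0$, and the six-term expression of Theorem \ref{Dirac4} may be taken over directly.

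Substituting $q=0$, four of the six Jacobi factors acquire lower index $q-1=-1$ and therefore vanish: these are precisely the terms carrying $\uz\wedge\uzd$, $\uu\wedge\uud$, $\uzd\uu$, and $\uz\uzd\uu\uud$. The two survivors are the scalar/$\beta$ term with $P_0^{n-1,p}=1$ and the $\uz\uud$ term with $P_0^{n,p-1}=1$. After cancelling the overall $\la z,z\ra^{-1}\la u,u\ra^{-1}$ prefactor against the $\la z,z\ra\la u,u\ra$ multiplying each of these two survivors, I obtain
\begin{align*}
\upzd\upz K_{p+1,1}^n\upud\upu=c_{p+1,1}(p+1)(n+p+1)\Big(\la z,u\ra^p(\beta+p)(n-\beta)-p\la z,u\ra^{p-1}(n-\beta)\uz\uud\Big).
\end{align*}

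To recast this as the lemma states, I would pull $(n-\beta)$ out to the left. This requires that $(n-\beta)$ commute with $(\beta+p)$ and with $\uz\uud$. The first is immediate. For the second, Lemma \ref{beta} gives $\beta\uz=\uz(\beta-1)$, and the analogous Grassmann-identity computation yields $\beta\uud=\uud(\beta+1)$; combining these gives $\beta\uz\uud=\uz(\beta-1)\uud=\uz\uud\beta$, so $(n-\beta)\uz\uud=\uz\uud(n-\beta)$, and the factorization is legitimate. The symmetric statement for homogeneity $(1,p+1)$ then follows by a direct application of Corollary \ref{symm}, which transports the formula under $z\leftrightarrow u$ together with Hermitian conjugation, converting $(n-\beta)(\beta+p)$ into $\beta(n-\beta+p)$ and $\uz\uud$ into $\uzd\uu$.

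The main obstacle is not any new calculation but purely the bookkeeping needed to certify the $q=0$ reduction: one has to trace through Lemmas \ref{Dirac1}--\ref{Dirac3} and Theorem \ref{Dirac4} and confirm that every invocation of a Jacobi recurrence is covered by the list in Remark \ref{pzero}. Once that one-time check is done, the argument collapses to the short algebraic manipulation above, together with the commutator check for $(n-\beta)$.
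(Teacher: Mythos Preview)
Your proposal is correct and matches the paper's approach exactly: the paper gives no separate proof for this lemma, simply noting in the preceding paragraph that Theorem \ref{Dirac4} holds formally for $q=0$ under the convention $P_{-1}^{a,b}\equiv 0$ of Remark \ref{pzero}, which is precisely the specialization you carry out. One small simplification: the commutator check $(n-\beta)\uz\uud=\uz\uud(n-\beta)$ is unnecessary, since in the $\uz\uud$ term of Theorem \ref{Dirac4} the factor $(n-\beta+q)$ already sits to the left of $\uz\uud$, so after setting $q=0$ you only need the trivial commutation $(\beta+p)(n-\beta)=(n-\beta)(\beta+p)$ to extract the common left factor.
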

		Theorem \ref{Dirac4} does not cover the case where the harmonic kernel $K_{p+1,p+1}^n$ is of homogeneity $(p+1,p+1)$. We give the 
		result in the	following lemma, omitting the proof, as it is similar to the previous case.
		\begin{lemma}
			When applying the operators $\upzd\upz$ and $\upud\upu$ on the harmonic kernel $K_{p+1,p+1}^n$ of homogeneity $(p+1,p+1)$ one gets
			\begin{align*}
				\upzd\upz K_{p+1,p+1}^n\upud\upu&=c_{p+1,p+1}\la z,z\ra^{p-1}\la u,u\ra^{p-1}(p+1)(n+2p+1)\\
				&\times\Big(\la z,z\ra\la u,u\ra(\beta+p)(n-\beta+p)P_p^{n-1,0}-\la u,u\ra\uz\wedge\uzd p(\beta+p)P_{p-1}^{n,0}\\
				&-\la z,z\ra p(\beta+p)\uu\wedge\uud P_{p-1}^{n,0}-\la z,u\ra(n+p)(\beta+p)\uzd\uu P_{p-1}^{n,1}\\
				&-\overline{\la z,u\ra}(n+p)(n-\beta+p)\uz\uud P_{p-1}^{n,1}+p(n+2p)\uz\uzd\uu\uud P_{p-1}^{n,0}\Big).
			\end{align*}
		\end{lemma}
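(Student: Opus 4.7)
The plan is to follow the four-step procedure from the proof of Theorem \ref{Dirac4}, successively applying $\upz$, $\upzd$ from the left and then $\upud$, $\upu$ from the right to the harmonic kernel, which in this case specializes to
\begin{align*}
K_{p+1,p+1}^n(z,u) = c_{p+1,p+1}\la z,z\ra^{p+1}\la u,u\ra^{p+1}P_{p+1}^{n-2,0}(2s-1),
\end{align*}
corresponding to $p-q=0$ in Theorem \ref{harmkernel2}. Note that no explicit $\la z,u\ra^{p-q}$ prefactor appears, so both $\la z,u\ra$ and $\overline{\la z,u\ra}$ can only enter through derivatives of the angular variable $s=\la z,u\ra\overline{\la z,u\ra}/(\la z,z\ra\la u,u\ra)$.

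The first step computes $\upz K_{p+1,p+1}^n$ using Lemma \ref{DiracP} and the product rule; the surviving contributions are an $\uzd$-term from differentiating $\la z,z\ra^{p+1}$ and an $\uud$-term from $\upz s = \uud s/A - \uzd s/C$. Combining these via the recurrences (\ref{jacobi6}) and (\ref{jacobi7}) of Lemma \ref{JacRec2}, with $p-q=0$ and $P_{-1}^{a,b}=0$ as in Remark \ref{pzero}, produces the analogue of Lemma \ref{Dirac1}. The second step applies $\upzd$ from the left; the nilpotency $\upzd\upzd = 0$ (Remark \ref{nil}) eliminates the potential $\upzd\uzd$ contribution, while $\upzd\uud$ gives rise to $n-\beta$ through Lemma \ref{beta}, organizing the scalar, bivector and $\beta$-linear pieces analogously to Lemma \ref{Dirac2}.

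In the third and fourth steps, $\upud$ and then $\upu$ act from the right. This is the bulk of the work: one repeatedly invokes the anti-commutator identities $\uz\uzd\uz = \uz\la z,z\ra$, $\uu\uud\uu = \uu\la u,u\ra$, $\uz\uud\uu\uud = \uz\uud\la u,u\ra$ and their counterparts, splits Clifford products into wedge-plus-dot combinations, and reduces the resulting Jacobi-polynomial expressions through the recurrences (\ref{jacobi1})--(\ref{jacobi5}) of Lemma \ref{JacRec1}. The six Clifford-valued pieces listed in the statement (scalar, $\uz\wedge\uzd$, $\uu\wedge\uud$, $\uzd\uu$, $\uz\uud$, and the $4$-vector $\uz\uzd\uu\uud$) emerge by the same pattern as in Theorem \ref{Dirac4}. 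The simultaneous appearance of $\la z,u\ra\uzd\uu P_{p-1}^{n,1}$ and $\overline{\la z,u\ra}\uz\uud P_{p-1}^{n,1}$ with conjugated prefactors reflects the Hermitian self-conjugacy $K_{p+1,p+1}^n(z,u) = \overline{K_{p+1,p+1}^n(z,u)}$, and can be cross-checked via Corollary \ref{symm} applied in the degenerate case $p=q$.

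The main obstacle is the scalar-polynomial bookkeeping: the telescoping cancellations of the form $G(s)=0$, $G_1(s)$, $G_2(s)$ from the proof of Theorem \ref{Dirac4} must be re-derived with $p-q$ set to $0$, and care is required because several Jacobi polynomials $P_k^{a,-1}$ that appeared there vanish here, while new $P_{p-1}^{n,1}$ contributions arise through the $\overline{\la z,u\ra}$-derivatives of $s$. Tracking these substitutions is what distinguishes the present computation from that of Theorem \ref{Dirac4}. Once the scalar identities have been verified, the operator-valued coefficients $(\beta+p)(n-\beta+p)$, $(\beta+p)$ and $(n-\beta+p)$ assemble through the commutator relations of Lemma \ref{beta} exactly as before, yielding the stated formula.
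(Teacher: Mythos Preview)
Your proposal is correct and follows exactly the approach the paper intends: the paper explicitly omits the proof of this lemma, stating only that it is ``similar to the previous case'' (i.e.\ Theorem \ref{Dirac4}), and your four-step outline---applying $\upz$, $\upzd$, $\upud$, $\upu$ in succession, invoking Lemmas \ref{DiracP}, \ref{JacRec1}, \ref{JacRec2} and the Clifford identities of Lemma \ref{beta} and Remark \ref{nil}---is precisely that computation specialized to $p=q$. Your observation that the $\overline{\la z,u\ra}$ prefactor on the $\uz\uud$ term arises from the $s$-derivative (since $s=AB/CD$ supplies a factor of $B=\overline{\la z,u\ra}$ once the $A^{p-q}$ prefactor is absent) and your use of the self-conjugacy $K_{p+1,p+1}^n=\overline{K_{p+1,p+1}^n}$ as a consistency check are both apt and go slightly beyond what the paper makes explicit.
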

		\indent In the Euclidean case the reproducing kernel of spherical monogenics can be expressed (up to a constant) by two Dirac operators with respect to 
		$\ux$ and $\uy$	acting on the reproducing kernel of (real) spherical harmonics from the left and right respectively. Analogously the reproducing 
		kernel of spherical Hermitian monogenics can be described in terms of two pairs of Dirac operators acting on the reproducing kernel of (complex)
		spherical harmonics which we have just computed.	However, the necessary constant will no longer be scalar as in the Euclidean case but instead a (constant)
		polynomial in the Clifford number $\beta$. We will give this constant in Section \ref{sec:3a}.\\
		Our main theorem is hence
		\begin{theorem}
			\label{RepKernel2}
			For any spherical h-monogenic $M_{s,t}\in\cM_{s,t}^{(j)}$ it holds that
			\begin{align}
				\label{rep3}\la \widetilde{K}_{p,q}^n(\cdot,u),M_{s,t}(\cdot)\ra_{\mS^{2n-1}}&=\delta_{ps}\delta_{qt}M_{s,t}(u),\\
				\label{rep4}\la \widetilde{K}_{p,q}^n(\cdot,u),\uz M_{s,t}(\cdot)\ra_{\mS^{2n-1}}&=0,\\
				\label{rep5}\la \widetilde{K}_{p,q}^n(\cdot,u),\uzd M_{s,t}(\cdot)\ra_{\mS^{2n-1}}&=0,\\
				\label{rep6}\la \widetilde{K}_{p,q}^n(\cdot,u),(c_1\uz\uzd+c_2\uzd\uz)M_{s,t}(\cdot)\ra_{\mS^{2n-1}}&=0,
			\end{align}
			with the reproducing kernel
			\begin{align*}
				\widetilde{K}_{p,q}^n(z,u)=d_{p,q}(\beta)\upzd\upz K_{p+1,q+1}^n(z,u)\upud\upu,
			\end{align*}
			for $p>q\geq 1$ and
			\begin{align*}
				\big((n+p+q+1)^2(n-\beta+q)(\beta+p)\big)d_{p,q}(\beta)=1.\\
			\end{align*}
		\end{theorem}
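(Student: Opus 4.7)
The strategy mirrors the Euclidean proof of Theorem \ref{RepKernel1}, with the $\mathfrak{osp}(1|2)$ relations replaced by the $\mathfrak{sl}(1|2)$ relations \eqref{sl12a}--\eqref{sl12b}, Lemma \ref{Duality1} replaced by the Hermitian dualities of Lemma \ref{FischerSphere2}, and a single Dirac pair replaced by two. Since every bihomogeneous spherical harmonic of bidegree $(p,q)$ is in particular a spherical harmonic of total degree $p+q$, Theorem \ref{FischerSphere1} applies and the Fischer--spherical proportionality lets us verify \eqref{rep3}--\eqref{rep6} with the $\la\cdot,\cdot\ra_\partial$ pairing in place of $\la\cdot,\cdot\ra_{\mS^{2n-1}}$.

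The three orthogonality statements \eqref{rep4}, \eqref{rep5}, \eqref{rep6} follow at once from the h-monogenicity $\upz\widetilde{K}^n_{p,q} = \upzd\widetilde{K}^n_{p,q} = 0$, which was established in the paragraph preceding the theorem using the nilpotency from Remark \ref{nil} and the harmonicity of $K^n_{p+1,q+1}$. Indeed, Lemma \ref{FischerSphere2} yields $\la\widetilde{K}^n_{p,q}, \uz M_{s,t}\ra_\partial = 2\la\upz\widetilde{K}^n_{p,q}, M_{s,t}\ra_\partial = 0$, and analogous one-line arguments dispose of $\uzd M_{s,t}$ and of each summand of $(c_1\uz\uzd + c_2\uzd\uz)M_{s,t}$.

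For the reproducing property \eqref{rep3}, the outer operator $\upud\upu$ acts only on the variable $u$ and therefore passes through the $z$-Fischer pairing as an external operator on $u$. Two applications of Lemma \ref{FischerSphere2} then transfer $\upzd\upz$ off $K^n_{p+1,q+1}$ onto $M_{s,t}$ as multiplication by $\uz\uzd$, accumulating a factor $\tfrac{1}{4}$. Since $K^n_{p+1,q+1}$ is harmonic and $\Delta$ is Fischer-dual to $\la z,z\ra$, only the harmonic projection of $\uz\uzd M_{s,t}$ contributes. The crucial algebraic input is the identity $\{\uz,\uzd\} = \la z,z\ra$ combined with the Witt-basis anticommutativity $\{\upz,\uzd\} = \{\upzd,\uz\} = 0$ (immediate from \eqref{witt}); h-monogenicity of $M_{s,t}$ then yields $\upz\uzd M_{s,t} = \upzd\uz M_{s,t} = 0$, from which a short computation gives $\Delta(\uzd\uz M_{s,t}) = 4(\beta+s)M_{s,t}$. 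This determines the Fischer decomposition $\uz\uzd M_{s,t} = H + \la z,z\ra G$ with $G$ a scalar polynomial in $\beta$ times $M_{s,t}$, and the reproducing property of $K^n_{p+1,q+1}$ on harmonics of bidegree $(p+1,q+1)$ (Theorem \ref{harmkernel2}) applied to $H$ forces the Kronecker delta $\delta_{ps}\delta_{qt}$ and evaluates $H$ at $z=u$.

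The final step is to act with the external $\upud\upu$ on this evaluated harmonic projection. Using the anticommutators $\{\upu,\uu\} = \mE_u + \beta$, $\{\upud,\uud\} = \mE_{\bar u} + n - \beta$, the commutation relations \eqref{comm1} of Lemma \ref{beta}, and the h-monogenicity $\upu M_{s,t}(u) = \upud M_{s,t}(u) = 0$, the result collapses to a polynomial in $\beta$ times $M_{s,t}(u)$. The plan is then to verify that this polynomial is precisely $d_{p,q}(\beta)^{-1} = (n+p+q+1)^2(n-\beta+q)(\beta+p)$ so that the normalization cancels and one recovers $M_{s,t}(u)$. The main obstacle lies in this last book-keeping step: the harmonic projection $H|_{z=u}$ has two competing pieces (a $\uu\uud M_{s,t}(u)$-term and a $\la u,u\ra M_{s,t}(u)$-term), and one must propagate both through successive applications of $\upu$ and $\upud$, using the $\beta$-commutations carefully, and check that the cross-cancellations produce the clean Clifford factor $(n-\beta+q)(\beta+p)$, with the remaining scalar $(n+p+q+1)^2$ arising from the two Fischer--spherical constants (for degrees $p+q$ and $p+q+2$) together with the denominator in the Fischer decomposition of $\uz\uzd M_{s,t}$. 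That $d_{p,q}(\beta)$ is well-defined as an operator on the spinor space is guaranteed by the polynomial identity \eqref{betafactor}.
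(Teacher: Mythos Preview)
Your argument is correct, and the route you take for \eqref{rep3} is genuinely different from the paper's. The paper transfers only the outer operator $\upzd$ via Lemma~\ref{FischerSphere2}, then invokes the explicit Jacobi--polynomial form of $\upz K^{n}_{p+1,q+1}$ from Lemma~\ref{Dirac1} together with the summation identity \eqref{jacobi8} to rewrite the first argument as a finite sum $\sum_j \la z,z\ra^{j}K^{n}_{p-j,q+1-j}$ (and a companion sum involving $K^{n}_{p-j,q-j}$); harmonicity of $\uzd M_{s,t}$ then kills all $j\geq 1$ terms, and \emph{two} separate harmonic kernels reproduce $\uzd M_{s,t}$ and $(n-\beta+t)M_{s,t}$. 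You instead transfer \emph{both} operators, land on $\uz\uzd M_{s,t}$, compute its Fischer decomposition $H+\la z,z\ra G$ directly from the $\mathfrak{sl}(1|2)$ relations and the Witt anticommutators $\{\upz,\uzd\}=\{\upzd,\uz\}=0$, and use harmonicity of $K^{n}_{p+1,q+1}$ to discard the $\la z,z\ra G$ piece, so that a \emph{single} kernel $K^{n}_{p+1,q+1}$ does the reproducing. Your approach is more self-contained---it needs neither Lemma~\ref{Dirac1} nor the Jacobi identity \eqref{jacobi8}---while the paper's route stays closer to the explicit special-function machinery that is its main theme. The two reach exactly the same intermediate expression $(n+p+q)\uu\uud M_{s,t}(u)-(n-\beta+q)\la u,u\ra M_{s,t}(u)$ (up to an overall scalar absorbed in the Fischer--spherical constants), after which the $\upud\upu$ computation is identical.

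One small slip to fix: you transfer to $\uz\uzd M_{s,t}$ but then state the Laplacian of $\uzd\uz M_{s,t}$. Since $\uz\uzd=\la z,z\ra-\uzd\uz$, the formula you actually need is $\Delta(\uz\uzd M_{s,t})=4(n-\beta+t)M_{s,t}$, which gives $G=\frac{n-\beta+t}{n+s+t}M_{s,t}$; the rest of your outline goes through unchanged.
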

		\begin{proof}
			For a spherical h-monogenic $M_{s,t}(z)$ of order $(s,t)$ we have
			\begin{align*}
				\la\widetilde{K}_{p,q}^n,M_{s,t}\ra_\partial&=d_{p,q}(\beta)\la\upzd\upz K_{p+1,q+1}^n(z,u)\upud\upu,M_{s,t}\ra_\partial\\
				&=\frac{1}{2}d_{p,q}(\beta)\la\upz K_{p+1,q+1}^n\upud\upu,\uzd M_{s,t}\ra_\partial,
			\end{align*}
			where we used Lemma \ref{FischerSphere2} to push the first Dirac operator $\upzd$ to the second argument.
			Using the definition of the Fischer inner product
			\begin{align*}
				\la\widetilde{K}_{p,q}^n,M_{s,t}\ra_\partial&=\frac{1}{2}d_{p,q}(\beta)\Big[\big(\upz K_{p+1,q+1}^n(\partial,u)\upud\upu\big)^\dagger
				\big(\uzd M_{s,t}(z)\big)\Big]_{z=0}\\
				&=\frac{1}{2}d_{p,q}(\beta)\Big[\big(\upud\upu\overline{K_{p+1,q+1}^n}(\partial,u)\upzd\big)\big(\uzd M_{s,t}(z)\big)\Big]_{z=0}\\
				&=\frac{1}{2}d_{p,q}(\beta)\upud\upu\la\upz K_{p+1,q+1}^n,\uzd M_{s,t}\ra_\partial,
			\end{align*}
			we can pull the Dirac operators with respect to $\uu$ out of the inner product and plug in the computation of $\upz K_{p+1,q+1}^n$ of 
			Lemma \ref{Dirac1}, that is
			\begin{align*}
				\la\widetilde{K}_{p,q}^n,M_{s,t}\ra_\partial&=\frac{1}{2}d_{p,q}(\beta)c_{p+1,q+1}\upud\upu\Big(\Big\la(p+1)\uud A^{p-q-1}C^{q+1}D^{q+1}
				P_{q+1}^{n-1,p-q-1},
				\uzd M_{s,t}{\Big\ra}_\partial\\
				&-\Big\la\uzd A^{p-q}C^{q}D^{q+1}P_{q}^{n-1,p-q},\uzd M_{s,t}{\Big\ra}_\partial\Big).
			\end{align*}
			Note that the first arguments in the inner products resemble the reproducing kernel of spherical harmonics of order $(p,q+1)$ and $(p,q)$. 
			If we use equation (\ref{jacobi6}) of Lemma \ref{JacRec2} we get
			\begin{align*}
				\la\widetilde{K}_{p,q}^n,M_{s,t}\ra_\partial&=\frac{1}{2}d_{p,q}(\beta)c_{p+1,q+1}(p+1)\upud\upu\Big(\Big\la\uud A^{p-q-1}C^{q+1}D^{q+1}
				\frac{1}{{{n-1+p}\choose{p}}}\sum_{j=0}^qc_{p-j,q+1-j}P_{q+1-j}^{n-2,p-q-1},\uzd M_{s,t}{\Big\ra}_\partial\\
				&-\Big\la\uzd A^{p-q}C^{q}D^{q+1}\frac{1}{{{n-1+p}\choose{p}}}\sum_{j=0}^qc_{p-j,q-j}P_{q-j}^{n-2,p-q},\uzd M_{s,t}{\Big\ra}_\partial\Big)\\
				&=\frac{1}{2}d_{p,q}(\beta)\frac{c_{p+1,q+1}(p+1)}{{{n-1+p}\choose{p}}}\upud\upu\Big(\Big\la\uud\sum_{j=0}^q\la z,z\ra^j\la u,u\ra^jK_{p-j,q+1-j}^n,
				\uzd M_{s,t}{\Big\ra}_\partial\\
				&-\Big\la\uzd\sum_{j=0}^q\la z,z\ra^j\la u,u\ra^{j+1}K_{p-j,q-j}^n,\uzd M_{s,t}{\Big\ra}_\partial\Big).
			\end{align*}
			Using the definition of the Fischer inner product we get
			\begin{align*}
			\la\widetilde{K}_{p,q}^n,M_{s,t}\ra_\partial&=\frac{1}{2}d_{p,q}(\beta)(n+p+q+1)\upud\upu
			\Big(\sum_{j=0}^q\Big[\uu 4^j\Delta^j\la u,u\ra^j\overline{K_{p-j,q+1-j}^n}(\partial,u)\big(\uzd M_{s,t}(z)\big)\Big]_{z=0}\\
			&-\sum_{j=0}^q\Big[4^j\Delta^j\la u,u\ra^{j+1}\overline{K_{p-j,q-j}^n}(\partial,u)2\upzd\big(\uzd M_{s,t}(z)\big)\Big]_{z=0}\Big).
			\end{align*}
			Because $\uzd M_{s,t}(z)$ is harmonic for a h-monogenic function $M_{s,t}(z)$ all terms in the sum vanish, except for $j=0$. We therefore have
			\begin{align*}
			\la\widetilde{K}_{p,q}^n,M_{s,t}\ra_\partial&=\frac{1}{2}d_{p,q}(\beta)(n+p+q+1)\upud\upu\Big(
			\uu\Big[\overline{K_{p,q+1}^n}(\partial,u)\big(\uzd M_{s,t}(z)\big)\Big]_{z=0}\\
			&-\Big[\la u,u\ra\overline{K_{p,q}^n}(\partial,u)2\upzd\big(\uzd M_{s,t}(z)\big)\Big]_{z=0}\Big)\\
			&=\frac{1}{2}d_{p,q}(\beta)(n+p+q+1)\upud\upu\Big(\uu\Big\la K_{p,q+1}^n,\uzd M_{s,t}\Big\ra_\partial\\
			&-2\la u,u\ra\Big\la K_{p,q}^n,(n-\beta+t)M_{s,t}(z)\Big\ra_\partial\Big),
			\end{align*}
			where we used the $\mathfrak{sl}(1|2)$ relations, see formulas (\ref{sl12a}) and (\ref{sl12b}). After switching to the spherical inner product the spherical 
			harmonic $\uzd M_{s,t}(z)$ is reproduced by the kernel $K_{p,q+1}^n$ for $s=p$ and $t=q$ in the first term. For the same choice of parameters $s$ and $t$ the 
			spherical harmonic $M_{s,t}(z)$ is reproduced by $K_{p,q}^n$ in the second term, hence
			\begin{align*}
				\la\widetilde{K}_{p,q}^n,M_{s,t}\ra_\partial&=\frac{1}{2}d_{p,q}(\beta)(n+p+q+1)\upud\upu\Big(2^{p+q+1}\big(n)_{p+q+1}\uu\Big\la K_{p,q+1}^n,
				\uzd M_{s,t}\Big\ra_{\mS^{2n-1}}\\
				&-2^{p+q}\big(n)_{p+q}\la u,u\ra\Big\la K_{p,q}^n,2(n-\beta+t)M_{s,t}(z)\Big\ra_{\mS^{2n-1}}\Big)\\
				&=2^{p+q}\big(n)_{p+q}d_{p,q}(\beta)(n+p+q+1)\delta_{sp}\delta_{tq}\\
				&\times\Big((n+p+q)\upud\upu\big(\uu\uud M_{s,t}(u)\big)-(n-\beta+q)\upud\upu\big(\la u,u\ra M_{s,t}(u)\big)\Big)\\
				&=2^{p+q}\big(n)_{p+q}d_{p,q}(\beta)(n+p+q+1)\delta_{sp}\delta_{tq}\\
				&\times\Big((n+p+q)\upud\big((\beta+p)\uud M_{s,t}(u)\big)-(n-\beta+q)\upud\big(\uud M_{s,t}(u)\big)\Big)\\
				&=2^{p+q}\big(n)_{p+q}d_{p,q}(\beta)(n+p+q+1)\delta_{sp}\delta_{tq}\\
				&\times\Big((n+p+q)(\beta+p+1)(n-\beta+q)M_{s,t}(u)\big)-(n-\beta+q)(n-\beta+q)M_{s,t}(u)\big)\Big)\\
				&=2^{p+q}\big(n)_{p+q}d_{p,q}(\beta)(n+p+q+1)^2(n-\beta+q)(\beta+p)\delta_{sp}\delta_{tq}M_{s,t}(u),
			\end{align*}
			where we again used formulas (\ref{sl12a}) and (\ref{sl12b}) with respect to $\uu$ and $\uud$. In terms of the spherical inner product we therefore have
			\begin{align*}
				\la\widetilde{K}_{p,q}^n,M_{s,t}\ra_{\mS^{2n-1}}&=\frac{1}{2^{p+q}\big(n)_{p+q}}\la\widetilde{K}_{p,q}^n,M_{s,t}\ra_\partial\\
				&=d_{p,q}(\beta)(n+p+q+1)^2(n-\beta+q)(\beta+p)\delta_{sp}\delta_{tq}M_{s,t}(u)=\delta_{sp}\delta_{tq}M_{s,t}(u),
			\end{align*}
			which completes the proof of (\ref{rep3}). For the statements (\ref{rep4}) to (\ref{rep6}) we again consider the Fischer inner product
			\begin{align*}
				\la \widetilde{K}_{p,q}^n(\cdot,u),\uz M_{s,t}(\cdot)\ra_{\partial}&=\la \upzd\upz K_{p+1,q+1}^n(\cdot,u)\upud\upu,\uz M_{s,t}(\cdot)\ra_{\partial}\\
				&=2\la \upz\upzd\upz K_{p+1,q+1}^n(\cdot,u)\upud\upu, M_{s,t}(\cdot)\ra_{\partial},
			\end{align*}
			where we used again Lemma \ref{FischerSphere2}. Because of $\upz\upz=0$, as stated in Remark \ref{nil}, we are allowed to add an extra term to the first argument, e.g.
			\begin{align*}
				\la \widetilde{K}_{p,q}^n(\cdot,u),\uz M_{s,t}(\cdot)\ra_{\partial}&=2\la \upz\big(\upzd\upz+\upz\upzd\big)
				K_{p+1,q+1}^n(\cdot,u)\upud\upu, M_{s,t}(\cdot)\ra_{\partial}\\
				&=2\la \upz\Delta K_{p+1,q+1}^n(\cdot,u)\upud\upu, M_{s,t}(\cdot)\ra_{\partial}=0.
			\end{align*}
			Here we also used the fact that $\upzd\upz+\upz\upzd=\Delta$ and the harmonicity of the kernel $K_{p+1,q+1}^n(z,u)$. Equation (\ref{rep4}) follows then from the
			proportionality of the Fischer and the spherical inner product. The proofs of statements (\ref{rep5}) and (\ref{rep6}) are analogous and thereby omitted. 
		\end{proof}
		\begin{remark}
			Although a prerequisite of Theorem \ref{RepKernel2} is that $p>q\geq 1$, the statement remains true for all choices of homogeneity $(p,q)$. The proof of this is
			similar to the one shown in the above theorem.
		\end{remark}
		\subsection{Normalization}
			\label{sec:3a}
			In the case of spherical monogenics of order $k$ the reproducing kernel can be obtained up to a constant by letting two Dirac operators act on the
			harmonic kernel of order $(k+1)$. As seen in Theorem \ref{RepKernel1} this normalization constant is found to be $c_k=-(m+2k)^{-2}$. For the
			reproducing kernel of spherical h-monogenics the necessary normalization constant will not be scalar, which follows from the condition we
			derived in Theorem \ref{RepKernel2}, i.e.
			\begin{align}
				\label{norm}(n+p+q+1)^2(n-\beta+q)(\beta+p)d_{p,q}(\beta)=1.
			\end{align}
			This constant $d_{p,q}(\beta)$ has to invert a quadratic polynomial in $\beta$ and therefore also has to be a polynomial
			in $\beta$. For more information on these so-called	spin-Euler polynomials we refer to \cite{fundaments,fundaments2} and \cite{stirling}. In regard of the
			factorial property (\ref{betafactor})	in Lemma \ref{beta} a suitable basis for these polynomials seems to be given by the Lagrange polynomials.
			Indeed when considering the Lagrange polynomials for the points $x_j=j$ for $j=0,\cdots,n$
			\begin{align*}
				L_k(x)=\prod_{\substack{j=0\\j\neq k}}^n\frac{x-j}{k-j}
			\end{align*}
			and evaluating them in $\beta$ we have some useful properties, given in the following lemma.
			\begin{lemma}
				\label{lagrange}
				For the Lagrange polynomials $L_j$ in $\beta$ it holds that
				\begin{align}
					\label{L1}\sum_{j=0}^nL_j(\beta)&=1,\\
					\label{L2}\beta L_j(\beta)&=jL_j(\beta)\hspace{10mm}j=0,\cdots,n,\\
					\label{L3}L_j(n-\beta)&=L_{n-j}(\beta)\hspace{8mm}j=0,\cdots,n.
				\end{align}
			\end{lemma}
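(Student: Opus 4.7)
The plan is to prove each of the three identities separately, exploiting that the first and third are in fact polynomial identities in a scalar indeterminate $x$, while only the second genuinely uses the Clifford-algebraic nature of $\beta$.

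For $(\ref{L1})$, I would first argue at the level of polynomials: the sum $\sum_{j=0}^n L_j(x)$ is a polynomial of degree at most $n$ in $x$ that takes the value $1$ at each of the $n+1$ distinct nodes $x=0,1,\dots,n$ (since $L_j(k)=\delta_{jk}$). Hence $\sum_{j=0}^n L_j(x) \equiv 1$ as a polynomial identity, and substituting the Clifford-valued element $\beta$ (which commutes with scalars) gives the claim.

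For $(\ref{L2})$, the key observation is that combining the extra factor $(\beta-j)$ with the defining product for $L_j(\beta)$ yields all factors $(\beta - k)$ for $k=0,\dots,n$. Concretely,
\begin{align*}
(\beta - j)\, L_j(\beta) \;=\; \frac{1}{\displaystyle\prod_{\substack{k=0\\k\neq j}}^n (j-k)} \prod_{k=0}^n (\beta-k) \;=\; 0
\end{align*}
by the factorization property $(\ref{betafactor})$ of Lemma \ref{beta}. Rearranging gives $\beta L_j(\beta) = j L_j(\beta)$.

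For $(\ref{L3})$, I would again work at the polynomial level and perform the index substitution $l=n-k$ in the defining product:
\begin{align*}
L_j(n-x) \;=\; \prod_{\substack{k=0\\k\neq j}}^n \frac{n-k-x}{j-k}
\;=\; \prod_{\substack{l=0\\l\neq n-j}}^n \frac{l-x}{j-(n-l)}
\;=\; \prod_{\substack{l=0\\l\neq n-j}}^n \frac{x-l}{(n-j)-l} \;=\; L_{n-j}(x),
\end{align*}
where the last equality uses that numerator and denominator each absorb a sign coming from $(n+1)-1 = n$ terms — actually, both signs cancel factor by factor. Again, substituting $\beta$ for $x$ yields the stated identity. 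The only subtle point, and the step I would double-check carefully, is the sign bookkeeping in $(\ref{L3})$; everything else reduces to classical facts about Lagrange interpolation together with the single nontrivial input $(\ref{betafactor})$.
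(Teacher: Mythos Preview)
Your proof is correct and follows essentially the same approach as the paper: for $(\ref{L1})$ you give the standard interpolation argument (the paper simply asserts it ``holds for Lagrange polynomials in general''), for $(\ref{L2})$ you use exactly the same computation via $(\ref{betafactor})$, and for $(\ref{L3})$ you perform the same index substitution $l=n-k$, with the only cosmetic difference that the paper collects the signs as a global factor $(-1)^n/(-1)^n$ whereas you cancel them factor by factor.
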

			\begin{proof}
				Property (\ref{L1}) holds for Lagrange polynomials in general and thereby also for $L_j(\beta)$. Property (\ref{L2}) follows from the factorial 
				property (\ref{betafactor}), that is
				\begin{align*}
					\beta L_j(\beta)-jL_j(\beta)&=(\beta-j)L_j(\beta)=(\beta-j)\prod_{\substack{l=0\\l\neq j}}^n\frac{\beta-l}{j-l}
					=\frac{\beta(\beta-1)\cdots(\beta-n)}{\prod_{\substack{l=0\\l\neq j}}^n(j-l)}=0.
				\end{align*}
				For the symmetry property (\ref{L3}) we have
				\begin{align*}
					L_j(n-\beta)&=\prod_{\substack{l=0\\l\neq j}}^n\frac{(n-\beta)-l}{j-l}=\prod_{\substack{l=0\\l\neq j}}^n\frac{-\beta+(n-l)}{-(n-j)+(n-l)}
					=\frac{(-1)^n}{(-1)^n}\prod_{\substack{l=0\\l\neq j}}^n\frac{\beta-(n-l)}{(n-j)-(n-l)}\\
					&=\prod_{\substack{k=0\\k\neq n-j}}^n\frac{\beta-k}{(n-j)-k}=L_{n-j}(\beta).
				\end{align*}
			\end{proof}
		With these tools at hand we can solve equation (\ref{norm}) to determine the normalization constant $d_{p,q}(\beta)$. The result of this computation is
		given in the following lemma.
			\begin{lemma}
				The normalization constant $d_{p,q}(\beta)$ of the reproducing kernel of spherical	h-monogenics is given by
				\begin{align*}
					d_{p,q}(\beta)=\sum_{j=0}^nd_{p,q}^{(j)}L_j(\beta),
				\end{align*}
				where $L_j(\beta)=\prod_{\substack{l=0\\l\neq j}}^n\frac{\beta-l}{j-l}$ are the Lagrange polynomials and $d_{p,q}^{(j)}=(n+p+q+1)^{-2}(n-j+q)^{-1}(j+p)^{-1}$.
			\end{lemma}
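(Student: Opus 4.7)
The plan is to use the Lagrange polynomials $L_0(\beta),\dots,L_n(\beta)$ as a basis for polynomials in $\beta$ modulo the relation $\prod_{j=0}^n(\beta-j)=0$ from (\ref{betafactor}), then solve the defining equation (\ref{norm}) coefficient by coefficient. The key observation is that both factors $(\beta+p)$ and $(n-\beta+q)$ act diagonally on this basis by property (\ref{L2}): specifically $(\beta+p)L_j(\beta)=(j+p)L_j(\beta)$ and $(n-\beta+q)L_j(\beta)=(n-j+q)L_j(\beta)$. This reduces the otherwise awkward problem of inverting a quadratic polynomial in the non-invertible Clifford element $\beta$ to a sequence of $n+1$ independent scalar equations.

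Concretely, I would start from the ansatz $d_{p,q}(\beta)=\sum_{j=0}^n d_{p,q}^{(j)}L_j(\beta)$ and compute
\begin{align*}
(n+p+q+1)^2(n-\beta+q)(\beta+p)d_{p,q}(\beta)
&=(n+p+q+1)^2\sum_{j=0}^n(n-j+q)(j+p)\,d_{p,q}^{(j)}L_j(\beta).
\end{align*}
Comparing with $1=\sum_{j=0}^n L_j(\beta)$ from property (\ref{L1}) and using the fact that $\{L_j(\beta)\}_{j=0}^n$ are linearly independent modulo the minimal relation of $\beta$, one reads off
\[
(n+p+q+1)^2(n-j+q)(j+p)\,d_{p,q}^{(j)}=1\qquad\text{for each }j=0,\dots,n,
\]
which yields exactly the claimed value of $d_{p,q}^{(j)}$. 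The coefficients are well-defined because under the assumption $p>q\geq 1$ we have $j+p\geq 1$ and $n-j+q\geq 1$ for every $j\in\{0,\dots,n\}$, so no denominator vanishes.

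The only mild subtlety, which I would mention briefly, is the legitimacy of solving the equation modulo the relation (\ref{betafactor}): since $\beta$ acts on the spinor space as the projector sum $\sum_j f_j^\dagger f_j$, its spectrum is precisely $\{0,1,\dots,n\}$, so two polynomials in $\beta$ agree as operators if and only if they agree at those points; the Lagrange basis makes this matching term by term automatic. There is no substantive obstacle here—the entire argument is essentially a spectral decomposition of $\beta$ using the Lagrange interpolation formula, and the main work was already carried out in Lemma \ref{lagrange}.
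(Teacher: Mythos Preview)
Your proposal is correct and follows essentially the same approach as the paper: expand $d_{p,q}(\beta)$ in the Lagrange basis, use property (\ref{L2}) to replace $\beta$ by $j$ in each term, and match against $1=\sum_{j=0}^n L_j(\beta)$ from (\ref{L1}). The paper simply verifies the formula by direct substitution rather than invoking linear independence, but the computation is identical; your additional remarks on non-vanishing denominators and the spectral interpretation of $\beta$ are sound and slightly more thorough than what the paper provides.
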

			\begin{proof}
				In order to show that the given constant solves equation (\ref{norm}) we write $d_{p,q}(\beta)$ in terms of the Lagrange polynomials $L_j(\beta)$, hence
				\begin{align*}
					d_{p,q}(\beta)=\sum_{j=0}^nd_{p,q}^{(j)}L_j(\beta).
				\end{align*}
				For the normalization condition (\ref{norm}) we get
				\begin{align*}
					(n+p+q+1)^2(n-\beta+q)(\beta+p)d_{p,q}(\beta)&=(n+p+q+1)^2(n-\beta+q)(\beta+p)\sum_{j=0}^nd_{p,q}^{(j)}L_j(\beta)\\
					&=\sum_{j=0}^nd_{p,q}^{(j)}(n+p+q+1)^2(n-\beta+q)(\beta+p)L_j(\beta)\\
					&=\sum_{j=0}^nd_{p,q}^{(j)}(n+p+q+1)^2(n-j+q)(j+p)L_j(\beta),
				\end{align*}
				where we used property (\ref{L2}) of Lemma \ref{lagrange} twice in a row in the last step. If we choose the coefficients 
				$d_{p,q}^{(j)}=(n+p+q+1)^{-2}(n-j+q)^{-1}(j+p)^{-1}$ we get
				\begin{align*}
					(n+p+q+1)^2(n-\beta+q)(\beta+p)d_{p,q}(\beta)=\sum_{j=0}^nL_j(\beta)=1,
				\end{align*}
				where the last equality is true due to property (\ref{L1}) of Lemma \ref{lagrange}.
			\end{proof}

\newpage
\begin{landscape}
		\begin{table}[h]
		\begin{tabular}{|c|c|c|c|c|}
			\hline
			&Harmonic analysis&Complex harmonic analysis&Euclidean Clifford analysis&Hermitian Clifford analysis\\
			\hline\hline
			space/values&$\mR^m\rightarrow\mC$&$\mR^{2n}\rightarrow\mC$&$\mR^m\rightarrow\mathcal{C\ell}_m$&$\mR^{2n}\rightarrow\mC_{2n}$\\
			\hline
			&$\Delta=\sum\limits_{j=1}^m\partial_{x_j}\partial_{x_j}$&
			$\Delta=\sum\limits_{j=1}^n\partial_{z_j}\dbar_{z_j}$&
			$\upx=\sum\limits_{j=1}^me_j\partial_{x_j}$&
			$\upz=\sum\limits_{j=1}^nf_j\dbar_{z_j}\hspace{0.6cm}\upzd=\sum\limits_{j=1}^nf_j^\dagger\partial_{z_j}$\\
			operators&$\mE=\sum\limits_{j=1}^mx_j\partial_{x_j}$&
			$\mE_z=\sum\limits_{j=1}^nz_j\partial_{z_j}\hspace{0.4cm}\mE_{\zc}=\sum\limits_{j=1}^n\zc_j\dbar_{z_j}$&
			$\mE=\sum\limits_{j=1}^mx_j\partial_{x_j}$&
			$\mE_z=\sum\limits_{j=1}^nz_j\partial_{z_j}\hspace{0.6cm}\mE_{\zc}=\sum\limits_{j=1}^n\zc_j\dbar_{z_j}$\\
			&$r^2=\la x,x\ra$&
			$r^2=\la z,z\ra$&
			$r^2=-\ux^2$&
			$r^2=\{\uz,\uzd\}$\\
			\hline
			dual pair&$\mathfrak{sl}_2\times O(m)$&$\mathfrak{gl}_2\times U(n)$&$\mathfrak{osp}(1|2)\times Spin(m)$&$\mathfrak{sl}(1|2)\times U(n)$\\
			\hline
			null solutions&$\cH_k=\cP_k\cap\ker\Delta$&$\cH_{p,q}=\cP_{p,q}\cap\ker\Delta$&$\cM_k=\cP_k\cap\ker\upx$&$\cM_{p,q}^{(j)}=\cP_{p,q}^{(j)}\cap\ker\upz\cap\ker\upzd$\\
			\hline
			reproducing kernel&$K_k^m$&$K_{p,q}^{n}$&$\widetilde{K}_k^m=c_k\upx K_k^m\upy$&$\widetilde{K}_{p,q}^{n}=d_{p,q}(\beta)\upzd\upz K_{p,q}^{n}\upud\upu$\\
			\hline
			theorem&Theorem \ref{harmkernel1}&Theorem \ref{harmkernel2}&Theorem \ref{RepKernel1}&Theorem \ref{RepKernel2}\\
			\hline
			Fischer decomposition&
			$\cP_k^m=\bigoplus\limits_{j=0}^{\lfloor\frac{k}{2}\rfloor}|x|^{2j}\cH_{k-2j}^m$&
			$\cP_{p,q}^{2n}=\bigoplus\limits_{j=0}^{\min(p,q)}|z|^{2j}\cH_{p-j,q-j}^{2n}$&
			$\cP_k^m=\bigoplus\limits_{j=0}^k\ux^j\cM_{k-j}^m$&
			see \cite{fischer}, page 310\\
			\hline
		\end{tabular}
	\caption{summary of the results}
	\label{table}
	\end{table}
	\end{landscape}

\bibliographystyle{plain}
%\bibliography{RepKernel}

\end{document}